\documentclass[11pt,oneside,reqno]{amsart}
\usepackage[british]{babel}
\usepackage[a4paper]{geometry}
\geometry{verbose,lmargin=2.5cm,rmargin=2.5cm,bottom=4.5cm}
\usepackage{amsmath}
\usepackage{amsthm}
\usepackage{amssymb,stmaryrd,enumerate,bbm,latexsym,xcolor}

\usepackage[pdftex,bookmarks,colorlinks,breaklinks]{hyperref}  
\usepackage[bookmarks,colorlinks,breaklinks]{hyperref}
\definecolor{dullmagenta}{rgb}{0.4,0,0.4}   
\definecolor{darkblue}{rgb}{0,0,0.4}
\hypersetup{linkcolor=blue,citecolor=blue,filecolor=dullmagenta,urlcolor=darkblue} 

\newcommand{\TeXmacs}{T\kern-.1667em\lower.5ex\hbox{E}\kern-.125emX\kern-.1em\lower.5ex\hbox{\textsc{m\kern-.05ema\kern-.125emc\kern-.05ems}}}
\newcommand{\assign}{:=}
\newcommand{\citetexmacs}[1]{This document has been written using GNU {\TeXmacs} \cite{#1}.}
\newcommand{\mathd}{\mathrm{d}}

\newcommand{\tmdfn}[1]{\textbf{#1}}
\newcommand{\tmem}[1]{{\em #1\/}}

\newcommand{\tmnote}[1]{\thanks{\textit{Note:} #1}}
\newcommand{\tmop}[1]{\ensuremath{\operatorname{#1}}}
\newcommand{\tmsep}{, }
\newcommand{\tmstrong}[1]{\textbf{#1}}
\newcommand{\tmtextbf}[1]{\text{{\bfseries{#1}}}}
\newcommand{\tmtextit}[1]{\text{{\itshape{#1}}}}
\newenvironment{enumeratealpha}{\begin{enumerate}[a{\textup{)}}] }{\end{enumerate}}
\newenvironment{enumeratenumeric}{\begin{enumerate}[1.] }{\end{enumerate}}
\newenvironment{enumerateroman}{\begin{enumerate}[i.] }{\end{enumerate}}
\newtheorem{theorem}{Theorem}[section]
\newtheorem{corollary}[theorem]{Corollary}
\newtheorem{definition}[theorem]{Definition}
\newtheorem{lemma}[theorem]{Lemma}
\newtheorem{proposition}[theorem]{Proposition}
{\theoremstyle{remark}\newtheorem{remark}[theorem]{Remark}}

\numberwithin{equation}{section}

\begin{document}

\title{mixing FOR generic rough shear flows}
\tmnote{{\citetexmacs{TeXmacs:website}}}

\author{L.~Galeati}
\address{Batiment 8, Office MA B2455, AMCV Group\\
EPFL Lausanne\\
Switzerland}
\email{lucio.galeati@epfl.ch}

\author{M.~Gubinelli}
\address{Andrew Wiles Building\\ Mathematical Institute\\ University of Oxford}
\email{gubinelli@maths.ox.ac.uk}

\begin{abstract}
  We study mixing and diffusion properties of passive scalars driven by
  \tmtextit{generic} rough shear flows. Genericity is here understood in the
  sense of prevalence and (ir)regularity is measured in the Besov--Nikolskii
  scale $B^{\alpha}_{1, \infty}$, $\alpha \in (0, 1)$. We provide upper and
  lower bounds, showing that in general inviscid mixing in $H^{1 / 2}$ holds
  sharply with rate $r (t) \sim t^{1 / (2 \alpha)}$, while enhanced
  dissipation holds with rate $r (\nu) \sim \nu^{\alpha / (\alpha + 2)}$. Our
  results in the inviscid mixing case rely on the concept of
  $\rho$-irregularity, first introduced by Catellier and Gubinelli
  (Stoc.~Proc.~Appl.~126, 2016) and provide some new insights compared to the
  behavior predicted by Colombo, Coti~Zelati and Widmayer (Ars Inven. Anal., 2021).
  
  \
  
  {\noindent}\tmtextbf{MSC(2020):} 35Q35, 37C20, 76F25, 76R50.
\end{abstract}

\keywords{Mixing\tmsep Enhanced Dissipation\tmsep Prevalence\tmsep
$\rho$-irregularity\tmsep Rough Flows.}

{\maketitle}

{\tableofcontents}

\section{Introduction}\label{sec1}

We are interested in the long time behavior of solutions $f$ to
\begin{equation}
  \left\{\begin{array}{l}
    \partial_t f + u \partial_x f = \nu \Delta f\\
    f|_{t = 0} = f_0, \quad \int_{\mathbb{T}} f_0 (x, y) \mathd x = 0
  \end{array}\right. \label{eq:shear-flow}
\end{equation}
on the $2$-dimensional flat torus $\mathbb{T}^2$. The
PDE~\eqref{eq:shear-flow} is an advection-diffusion equation associated to a
shear flow $u = u (y) : \mathbb{T} \rightarrow \mathbb{R}$, $f :
\mathbb{R}_{\geqslant 0} \times \mathbb{T}^2 \rightarrow \mathbb{R}$ with
initial condition $f_0 \in L^2 (\mathbb{T}^2)$ and where $\nu \in [0, 1]$ is
the diffusion coefficient. Defining $\bar{u} : \mathbb{T}^2 \rightarrow
\mathbb{R}^2$ as $\bar{u} (x, y) : = (u (y), 0)^T$,
equation~\eqref{eq:shear-flow} may be written as
\begin{equation}
  \partial_t f + \bar{u} \cdot \nabla f = \nu \Delta f
  \label{eq:passive-scalar}
\end{equation}
which is the equation for a passive scalar $f$ advected by the velocity field
$\bar{u}$. Note that $\bar{u}$ is a divergence-free vector field and a
stationary solution to $2$D Euler equations.

Exactly for this reason, shear flows have received a lot of attention in the
literature, in connection to the problem of understanding the interaction
between mixing and diffusion in fluid mechanics and the transfer of energy
from large to small scales for the scalar $f$. In particular, shear flows are
sufficiently simple to allow explicit calculations, while presenting a highly
non trivial behavior, as already observed by Kelvin in~{\cite{kelvin}} in the
case of the Couette flow $u (y) = y$.

Observe that for continuous $u$, eq.~\eqref{eq:shear-flow} can be solved
explicitly by Feynman--Kac formula, giving
\begin{equation}
  f_t (x, y) =\mathbb{E} \left[ f_0 \left( x - \int_0^t u \left( y + \sqrt{2
  \nu} B^2_s \right) \mathd s + \sqrt{2 \nu} B^1_t, y + \sqrt{2 \nu} B^2_t
  \right) \right] \label{eq:dissipative-solution}
\end{equation}
where $B = (B^1, B^2)$ is a standard 2D Brownian motion~(Bm). In the case $\nu
= 0$ we obtain
\begin{equation}
  f_t (x, y) = f_0 (x - t u (y), y) . \label{eq:inviscid-solution}
\end{equation}
Both formulas~\eqref{eq:dissipative-solution} and~\eqref{eq:inviscid-solution}
can then be extended to the case $u \in L^1 (\mathbb{T})$, \footnote{For $u
\in L^1 (\mathbb{T})$, the formal expression $\int_0^t u \left( y + \sqrt{2
\nu} B^2_s \right) \mathd s$ in~\eqref{eq:dissipative-solution} can be made
rigorous using the local time of $B^2$; alternatively,
equation~\eqref{eq:shear-flow} can be solved by applying the Fourier transform
in the $x$-variable and solving the family of equations for $f^k = P_k f$, see
the beginning of Appendix~\ref{app:wei-extension} for more details.} in which
case eq.~\eqref{eq:shear-flow} must be understood in the weak sense, and
generate continuous semigroups $e^{t (- u \partial_x + \nu \Delta)}$ on $L^2
(\mathbb{T}^2)$. Yet, they do not provide any immediate insight on the long
time behavior of the solution $f$, in particular on the decay in time of
quantities like $\| f_t \|_{H^{- s}}$ and $\| f_t \|_{L^2}$.

\

Following the line of research initiated
in~{\cite{wei}},~{\cite{colombozelati}}, we consider rough shear flows, in the
sense of requiring $u \in B^{\alpha}_{1, \infty} (\mathbb{T})$ for some
$\alpha \in (0, 1)$. Here $B^{\alpha}_{1, \infty} (\mathbb{T})$ denote the
Besov--Nikolskii spaces, see Appendix~\ref{app:besov} for their definition.

We are interested in understanding the behavior of \tmtextit{generic} $u \in
B^{\alpha}_{1, \infty} (\mathbb{T})$, a problem explicitly left open
in~{\cite{colombozelati}}. For this purpose we adopt the measure-theoretic
notion of genericity given by the theory of prevalence, developed by Hunt,
Sauer and Yorke~{\cite{huntsauer}} to provide an analogous of ``Lebesgue
almost every'' on infinite dimensional spaces, see
Section~\ref{sec:prevalence} for more details. In what follows the expression
``for almost every $\varphi \in E$'', where $E$ is a function space, is
understood in the sense of prevalence.

The next statement summarizes our main findings.

\begin{theorem}
  \label{thm:main-thm-1}Let $\alpha \in (0, 1)$. The following hold:
  \begin{enumerateroman}
    \item For almost every $u \in B^{\alpha}_{1, \infty} (\mathbb{T})$ we have
    inviscid mixing in the scale $H^{1 / 2} (\mathbb{T}^2)$, in the following
    sense: for any $\tilde{\alpha} > \alpha$, there exists $C = C (\alpha,
    \tilde{\alpha}, u)$ such that, for any $f_0 \in H^{1 / 2} (\mathbb{T})$
    satisfying $\int_{\mathbb{T}} f \left( x, \cdot \, \right) \mathd x \equiv
    0$, it holds
    \[ \| e^{- t u \partial_x} f_0 \|_{H^{- 1 / 2}} \leqslant C t^{-
       \frac{1}{2 \tilde{\alpha}}}  \| f_0 \|_{H^{1 / 2}} \quad \forall \, t
       \geqslant 0. \]
    \item For almost every $u \in B^{\alpha}_{1, \infty} (\mathbb{T})$ we have
    enhanced dissipation in the following sense that: for any $\tilde{\alpha}
    > \alpha$ there exist $C_i = C (\alpha, \tilde{\alpha}, u)$ such that, for
    any $f_0 \in L^2 (\mathbb{T})$ satisfying $\int_{\mathbb{T}} f \left( x,
    \cdot \, \right) \mathd x \equiv 0$, it holds
    \[ \| e^{t (- u \partial_x + \Delta)} f_0 \|_{L^2} \leqslant C_1 \exp
       \left( - C_2 t \nu^{\frac{\tilde{\alpha}}{\tilde{\alpha} + 2}} \right)
       \| f_0 \|_{L^2} \quad \forall \, t \geqslant 0, \nu \in [0, 1] . \]
  \end{enumerateroman}
\end{theorem}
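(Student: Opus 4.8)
The plan is to work mode-by-mode in the $x$-variable. Writing $f^k_t := P_k f_t$ for the $k$-th Fourier coefficient in $x$, both equations decouple: in the inviscid case $f^k_t(y) = e^{-\mathrm{i} k t u(y)} f^k_0(y)$, and in the dissipative case $f^k_t$ solves a 1D heat equation with imaginary potential, $\partial_t f^k + \mathrm{i} k u f^k = \nu \partial_{yy} f^k$. By Plancherel in $x$, proving the $H^{-1/2} \to H^{1/2}$ inviscid mixing estimate reduces to showing a uniform-in-$k$ bound of the form $\|e^{-\mathrm{i} k t u} g\|_{L^2_y} \lesssim (k t)^{-1/(2\tilde\alpha)} \|g\|_{H^{1/2}_y}$ for every $g \in H^{1/2}(\mathbb{T})$; similarly enhanced dissipation reduces to a decay estimate for the semigroup generated by $-\mathrm{i} k u + \nu \partial_{yy}$ with rate uniform in $|k| \geqslant 1$. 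This is the standard reduction used in \cite{wei,colombozelati}, and the genericity will enter only through a single analytic property of $u$.

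\textbf{The key mechanism: $\rho$-irregularity of the occupation measure.} The heart of the argument is that for almost every (prevalent) $u \in B^\alpha_{1,\infty}(\mathbb{T})$, the function $u$ is $\rho$-irregular in the sense of Catellier--Gubinelli for a suitable $\rho$ depending on $\alpha$: concretely, the oscillatory integrals $\int_{\mathbb{T}} e^{-\mathrm{i} \lambda u(y)} \varphi(y)\, \mathrm{d} y$ (and their localized/smoothed variants) decay like $|\lambda|^{-1/\tilde\alpha}$ as $|\lambda| \to \infty$, for any $\tilde\alpha > \alpha$, with constants controlled by suitable norms of $\varphi$. I would first establish this prevalence statement — this is where the Hunt--Sauer--Yorke machinery of Section~\ref{sec:prevalence} is invoked: one exhibits a probe measure (e.g. the law of a fractional-Brownian-type perturbation, or a Gaussian measure on $B^\alpha_{1,\infty}$) under which a generic perturbation $u + \varphi$ enjoys the stationary-phase-type decay with full probability, and then shranks this to a shy complement. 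With this decay in hand, the inviscid estimate follows: expand $g$ in its own Fourier series (or use a Littlewood--Paley decomposition in $y$), apply the oscillatory decay frequency-by-frequency, and sum, using the $H^{1/2}$-regularity of $g$ to absorb the loss; the exponent $\tfrac{1}{2\tilde\alpha}$ and the $H^{1/2}$-to-$H^{-1/2}$ gap of $1$ match precisely the $|\lambda|^{-1/\tilde\alpha}$ decay rate. For part (ii), I would combine the inviscid mixing rate with the general ``mixing $+$ diffusion $\Rightarrow$ enhanced dissipation'' bootstrap (in the spirit of Coti Zelati--Drivas, or the hypocoercivity/functional-inequality approach): a mixing rate $t^{-1/(2\alpha)}$ for the inviscid flow upgrades, in the presence of $\nu\partial_{yy}$, to an exponential decay at rate $\nu^{\alpha/(\alpha+2)}$ by optimizing over the time-scale at which diffusion overtakes the shearing-induced filamentation.

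\textbf{Assembling the two parts and uniformity in $k$.} Once the $\rho$-irregularity / oscillatory-decay property of $u$ is fixed (a property of $u$ alone, not of $k$), the mode $f^k$ sees the phase $k t u(y)$, so $\lambda = kt$ and all estimates are automatically uniform in $|k| \geqslant 1$ — this is what allows the mode-by-mode bounds to be recombined via Plancherel into the stated $H^{\pm 1/2}(\mathbb{T}^2)$ estimates. For the dissipative case one must additionally control the interplay between the oscillatory decay and the heat semigroup; the cleanest route is a Gearhart--Prüss / resolvent estimate for $-\mathrm{i} k u + \nu \partial_{yy}$ on the imaginary axis, where the resolvent bound is exactly the Laplace transform of the inviscid decay, giving a spectral gap of size $\nu^{\alpha/(\alpha+2)}$ uniformly in $k$.

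\textbf{Main obstacle.} The principal difficulty is establishing the prevalence statement: that a generic $u \in B^\alpha_{1,\infty}$ has the sharp oscillatory-decay exponent $1/\alpha$ (up to $\varepsilon$), \emph{and no better} in general, so that the mixing rate $t^{1/(2\alpha)}$ is genuinely attained and sharp. One must choose the probe measure carefully so that (a) it is supported in $B^\alpha_{1,\infty}$, (b) translates of a Borel set failing the decay form a shy set, and (c) the stationary-phase heuristic — decay governed by the flatness of $u$ at its critical points, which for a $B^\alpha$ function is controlled by the modulus of continuity — can be made rigorous with the low regularity available (no classical stationary phase, so one works directly with the occupation/local-time formalism of Catellier--Gubinelli). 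Matching the lower bound (sharpness) requires, in addition, producing for \emph{every} $u$ in a prevalent set an initial datum realizing the rate, or arguing that the class of bad $u$ with faster decay is itself shy.
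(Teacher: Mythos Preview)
Your reduction to Fourier modes and your identification of $\rho$-irregularity as the mechanism for part~(i) are essentially right and align with the paper. There are, however, two substantive problems.

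First, a quantitative slip: for a.e.\ $u \in B^\alpha_{1,\infty}$, $\rho$-irregularity holds only for $\rho < 1/(2\alpha)$, not $1/\alpha$; the paper shows this is sharp (Proposition~\ref{prop:rho-irr-besov-nikolskii}). The $H^{1/2}\to H^{-1/2}$ estimate does not come from ``matching the gap of $1$ derivative to decay $|\lambda|^{-1/\tilde\alpha}$'' but via a Besov paraproduct (Lemma~\ref{lem:estim-mixing-rho}): $(\gamma,\rho)$-irregularity with $\gamma>1/2$ places $e^{i\xi u}$ in $B^{\gamma-1}_{\infty,\infty}$ with norm $\lesssim |\xi|^{-\rho}$, and then $B^{\gamma-1}_{\infty,\infty}\cdot H^{1/2}\hookrightarrow H^{-1/2}$ since $\gamma-1>-1/2$. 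The special role of $H^{1/2}$ (as opposed to $H^s$ for general $s$) is forced by the constraint $\gamma>1/2$, not by any derivative-counting heuristic; whether a.e.\ $u\in B^\alpha_{1,\infty}$ mixes in $H^1$ at rate $t^{1/\alpha}$ is left open in the paper.

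Second, and more seriously, your plan for part~(ii) has a genuine gap. The paper explicitly notes (end of Section~\ref{sec1}) that the known rigorous bootstraps from inviscid mixing to enhanced dissipation are \emph{suboptimal}: applying~\cite{zelati2020relation} to the $H^{1/2}$-mixing rate $t^{-1/(2\alpha)}$ gives only the exponent $3\alpha/(3\alpha+1)$, not $\alpha/(\alpha+2)$. Your alternative suggestion, that the Gearhart--Pr\"uss resolvent bound is ``exactly the Laplace transform of the inviscid decay'', is also not correct: the resolvent of $-iku+\nu\partial_y^2$ does not reduce to such an oscillatory integral. What the paper actually does is introduce a \emph{second, independent} irregularity notion, Wei's condition $\Gamma_\alpha(u)>0$ (Definition~\ref{def:wei-condition}), measuring how far the primitive $\psi=\int_0^\cdot u$ stays from affine on every short interval; this is the input to Wei's resolvent estimate (Theorem~\ref{thm:wei}) yielding the sharp rate. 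A separate prevalence argument (Sections~\ref{sec:enhanced-stochastic}--\ref{sec:prevalence-enhanced}) --- again via LND Gaussian measures, but now using that $\int_0^\cdot u$ is $(1{+}H)$-SLND when $u$ is $H$-SLND --- establishes $\Gamma_\beta(u)>0$ for all $\beta>\alpha$ for a.e.\ $u$. This entire second branch is missing from your proposal; it is not a consequence of the $\rho$-irregularity obtained for part~(i), and indeed the paper leaves open whether $\rho$-irregularity alone implies the sharp dissipation rate (see point~3 in Section~5).
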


In the above statement, the condition $\int_{\mathbb{T}} f \left( x, \cdot \,
\right) \mathd x \equiv 0$ is necessary, as it naturally ensures that $f$
witnesses the effect of the transport operator $u \partial_x$; indeed $g_t (y)
: = \int_{\mathbb{T}} f_t (x, y) \mathd x$ must solve the standard heat
equation $\partial_t g = \nu \partial_y^2 g$ and thus cannot exhibit any
mixing/enhanced dissipation effect.

There is no obvious a priori reason to work with the spaces $B^{\alpha}_{1,
\infty} (\mathbb{T})$ (e.g. in~{\cite{colombozelati}} the authors deal with
$C^{\alpha} (\mathbb{T}) = B^{\alpha}_{\infty, \infty} (\mathbb{T})$), rather
they arise naturally in our analysis. One of the main intuitions of the
present paper is the identification of such spaces as the correct one for
studying generic inviscid mixing and enhanced dissipation properties of shear
flows. At the same time, let us mention that the only truly relevant parameter
is $\alpha \in (0, 1)$: indeed statements similar to those of
Theorem~\ref{thm:main-thm-1} can be given for the (smaller) spaces
$B^{\alpha}_{p, q} (\mathbb{T})$ for any choice of $p, q \in [1, \infty]$, see
Remark~\ref{rem:intro} below.

\

Before moving further, let us heuristically motivate the connection between
Points~\tmtextit{i.} and~\tmtextit{ii.} of Theorem~\ref{thm:main-thm-1} and
why it is natural to expect $\nu^{\alpha / (\alpha + 2)}$ to appear, given the
decay $\| f_t \|_{H^{- 1 / 2}} \lesssim t^{- 1 / (2 \alpha)}$. In fact, the
argument can be given in a much more general framework: let $f^{\nu}$ be a
solution to~\eqref{eq:passive-scalar} with $\nu > 0$, $\int_{\mathbb{T}^d} f_0
(z) \mathd z = 0$ and $\bar{u} : \mathbb{T}^d \rightarrow \mathbb{R}^d$ be a
divergence free vector field; then $f^{\nu}$ satisfies the energy balance
\[ \frac{\mathd}{\mathd t} \| f^{\nu}_t \|_{L^2}^2 = - 2 \nu \| \nabla
   f^{\nu}_t \|_{L^2}^2 . \]
Now assume the solution $f$ to the transport equation $\partial_t f + \bar{u}
\cdot \nabla f = 0$ to satisfy the decay $\| f_t \|_{\dot{H}^{- s}} \lesssim
t^{- s / \alpha}$ for suitable parameters $\alpha > 0, s \in
(0, 1]$ (for $s > 1$, one may reduce to $s = 1$ by Riesz--Thorin interpolation
theorem). For $\nu \ll 1$ and sufficiently short times, we expect $f^{\nu}$
and $f$ to stay close and therefore $f^{\nu}$ to exhibit the same decay as
$f$. By the interpolation inequality

\begin{align*}
  \| f \|_{L^2} \lesssim & \| f \|_{\dot{H}^{- s}}^{\frac{1}{1 + s}}  \|
  \nabla f \|_{L^2}^{\frac{s}{s + 1}},
\end{align*}

we deduce that
\begin{equation}
  \frac{\mathd}{\mathd t} \| f^{\nu}_t \|_{L^2}^{- \frac{2}{s}} \sim \nu \|
  f^{\nu}_t \|_{L^2}^{- 2 \left( \frac{s + 1}{s} \right)} \| \nabla f^{\nu}_t
  \|_{L^2}^2 \gtrsim \nu \| f^{\nu}_t \|_{\dot{H}^{- s}}^{-
  \frac{2}{s}} \gtrsim \nu t^{\frac{2}{\alpha}} .
  \label{eq:relation-inviscid-diss}
\end{equation}
Assume for simplicity $\| f_0 \|_{L^2} = 1$ and define $\tau > 0$ to be the
first time such that $\| f^{\nu}_t \|_{L^2} = 1 / 2$.
Integrating~\eqref{eq:relation-inviscid-diss} over $[0, \tau]$ we obtain

\begin{align*}
  1 \sim 2^{\frac{2}{s}} - 1 & \gtrsim \, \nu \int_0^{\tau}
  t^{\frac{2}{\alpha}} \mathd t \sim \nu \tau^{1 + \frac{2}{\alpha}} = \left(
  \nu^{\frac{\alpha}{\alpha + 2}} \tau \right)^{\frac{\alpha}{\alpha + 2}} .
\end{align*}

Namely, in order for the energy $\| f^{\nu}_t \|_{L^2}$ to be reduced by half
by the dynamics, we need to wait for at most $\tau \lesssim \nu^{- \alpha /
(\alpha + 2)}$. Iterating the argument on intervals $[n \tau, n (\tau + 1)]$
would then produce an asymptotic decay at least of the form $\exp (- C t
\nu^{\frac{\alpha}{\alpha + 2}})$.

While the argument is clearly heuristic, it predicts the correct exponent
$\frac{\alpha}{\alpha + 2}$ and works for any choice of the parameter $s > 0$
(in particular for $s = 1 / 2$ as in Theorem~\ref{thm:main-thm-1}) and not
only for $s = 1$, which is the case receiving the most attention in the
literature.

Unfortunately, there are only few rigorous quantitative results connecting
explicitly inviscid mixing and enhanced dissipation properties
(see~{\cite{zelati2020relation}} and the references therein) and they appear
not to be optimal. For instance for $s \in (0, 1]$, an application
of~Corollary~2.3 from~{\cite{zelati2020relation}} would only predict a decay
\[ \| f_t \|_{L^2} \leqslant \exp (- C \nu^{q_s} t) \| f_0 \|_{L^2}, \quad q_s
   \assign \frac{\alpha (1 + s)}{\alpha + s + \alpha s} ; \]
in particular $q_1 = \frac{2 \alpha}{2 \alpha + 1}$ while $q_{1 / 2} = \frac{3
\alpha}{3 \alpha + 1}$.

\

\tmtextbf{Relation with existing literature.} Understanding the interaction
between mixing and diffusion is one of the most fundamental problems in fluid
mechanics, dating back to the works of Kelvin~{\cite{kelvin}} and
Reynolds~{\cite{reynolds}}.

In the pioneering work~{\cite{constantin2008}}, such relation has been
formalized mathematically by introducing the concept of \tmtextit{relaxation
enhancing flows}; the result has been recently revisited in a more
quantitative fashion in the works~{\cite{zelati2020relation,fengiyer}}. The
use of weak norms $H^{- s}$ in order to quantify mixing of passive scalars was
first introduced in~{\cite{thiffeault}}.

Shear flows and circular flows in particular have been recently studied by
several authors, employing a variety of technique, including stationary phase
methods and hypocoercivity
schemes~{\cite{bedrossian2017enhanced,zelati2020stable,zelati2020separation}},
spectral methods~{\cite{wei}} and stochastic analysis~{\cite{zelatidrivas}}.
Roughly speaking, the main known results for~\eqref{eq:shear-flow} are the
following:
\begin{itemize}
  \item If $u \in C^{n + 1}$ has a finite number of critical points with
  maximal order $n$, then enhanced dissipation holds with $r (\nu) \sim
  \nu^{\frac{n}{n + 2}} (1 + \log \nu^{- 1})^{- 1}$, see Theorem~1.1
  in~{\cite{bedrossian2017enhanced}}.
  
  \item There exist $u \in C^{\alpha}$, $\alpha \in (0, 1)$, for which
  enhanced dissipation holds with $r (\nu) \sim \nu^{\frac{\alpha}{\alpha +
  2}}$, see Theorem~5.1 from~{\cite{wei}}.
  
  \item The above results are sharp, up to logarithmic corrections, in the
  sense that for $u \in C^{n + 1}$ (resp. $u \in C^{\alpha}$) the best
  possible rate is $r (\nu) \sim \nu^{\frac{n}{n + 2}}$ (resp. $r (\nu) \sim
  \nu^{\frac{\alpha}{\alpha + 2}}$), see Theorem~4 in~{\cite{zelatidrivas}};
  the proof is based on the Lagrangian Fluctuation Dissipation relation
  introduced in~{\cite{drivas1}},~{\cite{drivas2}}.
\end{itemize}
Let us also mention the remarkable stable mixing estimate obtained
in~{\cite{zelati2020stable}} for $u$ satisfying Assumption~(H) therein.
Motivated by the above results, the authors of~{\cite{colombozelati}} explore
the mixing and enhanced dissipation properties of rough shear flows, namely
$u$ sharply $\alpha$--H\"{o}lder for $\alpha \in (0, 1)$. In particular, they
construct a Weierstrass-type flow $u$ such that the following hold (see
Theorem~1.1 in~{\cite{colombozelati}}):
\begin{enumeratenumeric}
  \item enhanced dissipation holds with rate $r (\nu) \sim
  \nu^{\frac{\alpha}{\alpha + 2}}$, confirming the results from~{\cite{wei}};
  
  \item along suitable sequences $t_n \rightarrow \infty$, inviscid mixing
  holds on $H^1$ with rate $r (t) \sim t^{1 / \alpha}$:
  \[ \| e^{- t_n u \partial_x} f_0 \|_{H^{- 1}} \lesssim t_n^{-
     \frac{1}{\alpha}}  \| f_0 \|_{H^1} . \]
  \item however, to the authors' surprise, there exist other sequences
  $\tilde{t}_n \rightarrow \infty$ on which inviscid mixing only holds with
  rate $r (t) \sim t$, in the sense that
  \[ \| e^{- \tilde{t}_n u \partial_x} f_0 \|_{H^{- 1}} \gtrsim
     \tilde{t}_n^{- 1}  \| f_0 \|_{H^1} . \]
\end{enumeratenumeric}
In particular, the inviscid mixing rate $r (t) \sim t$ is the same attained by
suitable Lipschitz functions; the authors wonder whether such a discrepancy
between Points~2. and~3. is to be expected for generic flows $u \in
C^{\alpha}$, see the paragraph ``Perspectives'', p.3
in~{\cite{colombozelati}}.

The main aim of the present work is to give a negative answer to the above
question, while letting a more natural picture emerge in the context of
generic rough shear flows. Theorem~\ref{thm:main-thm-1} shows that for generic
$u \in B^{\alpha}_{1, \infty}$ (similarly for $u \in C^{\alpha}$, see
Remark~\ref{rem:intro}) inviscid mixing holds on $H^{1 / 2}$
with rate $r (t) \sim t^{1 / 2 \alpha}$, uniformly over all $t \geqslant 0$.
Such a decay is also the best possible, see
Theorem~\ref{thm:main-thm-inviscid} below. On the other hand,
Theorem~\ref{thm:main-thm-1} confirms the enhanced dissipation rate $r (\nu)
\sim \nu^{\alpha / (\alpha + 2)}$, already identified
in~{\cite{wei,colombozelati}}, as a property of generic shear flows.

We believe that the use of less standard spaces $B^{\alpha}_{1, \infty}$ and
mixing norms $H^{- s}$ with $s \neq 1$ to be some of the main contributions of
this work, compared to previous literature; they arise naturally in
computations, rather than being a mathematical artifact. A complete picture is
however still missing; for instance, the question whether generic $u \in
B^{\alpha}_{1, \infty}$ satisfy inviscid mixing on $H^1$ with
rate $r (t) \sim t^{1 / \alpha}$ is still open and goes beyond
our current methods.

\

\tmtextbf{Structure of the proof.} As done frequently in the literature, in
order to prove Theorem~\ref{thm:main-thm-1} for the PDE~\eqref{eq:shear-flow},
we will pass to study its hypoelliptic counterpart
\begin{equation}
  \partial_t f + u \partial_x f = \nu \partial_y^2 f
  \label{eq:hypoelliptic-shear-flow}
\end{equation}
again under the assumption $\int_{\mathbb{T}} f_0 (x, y) \mathd x = 0$ for all
$y \in \mathbb{T}$.

For $k \in \mathbb{Z}_0 \assign \mathbb{Z} \setminus \{ 0 \}$, define the
Fourier transform in the $x$-variable as
\[ (P_k f) (y) \assign \int_{\mathbb{T}} f (x, y) e^{- i k x} \mathd x \]
so that any $f : \mathbb{T}^2 \rightarrow \mathbb{R}$ has a decomposition $f
(x, y) = \sum_k (P_k f) (y) e^{i k x}$. If $f$
solves~\eqref{eq:hypoelliptic-shear-flow}, then for each $k \in \mathbb{Z}_0$
the function $f^k_t \assign P_k f_t$ solves the one dimensional complex valued
PDE (harmonic oscillator)
\begin{equation}
  \partial_t f^k + i k u f^k = \nu \partial_y^2 f^k .
  \label{eq:harmonic-oscillator}
\end{equation}
For $k \in \mathbb{Z}_0$, $\nu \geqslant 0$ and $u \in L^1 (\mathbb{T})$, the
PDE~\eqref{eq:harmonic-oscillator} has an associated semigroup on $L^2
(\mathbb{T}; \mathbb{C})$, which we denote by $e^{t (- i k u + \nu
\partial_y^2)}$; observe that the parameter $k$, up to its sign, may be
removed by the rescaling $\tilde{t} = t | k |$, $\tilde{\nu} = \nu / | k |$.
In this way the study of asymptotic behavior of $f^k$ may be reduced to that
of $f^{\pm 1}$, which motivates the following definitions.

Note that whenever we refer to a \tmtextit{rate} $r : \mathbb{R}_{\geqslant
0} \rightarrow \mathbb{R}_{\geqslant 0}$, we always assume it to be a
continuous, increasing function.

\begin{definition}
  \label{def:mixing}A velocity field $u \in L^1 (\mathbb{T})$ is said to be
  {\tmdfn{mixing}} on the scale $H^s (\mathbb{T}; \mathbb{C})$, $s \geqslant
  0$, with rate $r_{s \text{-mix}}$, if there exist a constant
  $C > 0$ such that
  \begin{equation}
    \| e^{- i t k u} \|_{H^s \rightarrow H^{- s}} \leqslant
    \frac{C}{r_{s \text{-mix}} (t | k |)}  \quad \forall \: k
    \in \mathbb{Z}_0, \; t \geqslant 1. \label{eq:def-mixing}
  \end{equation}
\end{definition}

\begin{definition}
  \label{def:diffusion-enhancing}A velocity field $u \in L^1 (\mathbb{T})$ is
  said to be {\tmdfn{diffusion enhancing}} on $L^2 (\mathbb{T}; \mathbb{C})$
  with rate $r_{\text{dif}}$ if there exists a constant $C > 0$
  such that
  \begin{equation}
    \| e^{t (- i k u + \nu \partial_y^2)} \|_{L^2 \rightarrow L^2} \leqslant C
    \exp \left( - r_{\text{dif}} \left( \frac{\nu}{| k |}
    \right) | k | t \right)  \quad \forall \, k \in \mathbb{Z}_0, \; \nu \in
    (0, 1], \; t \geqslant 1. \label{eq:def-enhancing}
  \end{equation}
\end{definition}

The following theorems, which are the main results of the paper, provide sharp
inviscid mixing and enhanced diffusion statements for generic shear flows. In
particular, they describe precisely the behavior of solutions
to~\eqref{eq:shear-flow} at each Fourier level $P_k$.

\begin{theorem}[Inviscid case $\nu = 0$]
  \label{thm:main-thm-inviscid}Let $\alpha \in (0, 1)$.
  \begin{enumeratealpha}
    \item Lower bound. Suppose that $u \in B^{\alpha}_{1, \infty}
    (\mathbb{T})$ is mixing on the scale $H^{1 / 2} (\mathbb{T}; \mathbb{C})$
    with rate $r_{1 / 2 \text{-mix}}$, in the sense of
    Definition~\ref{def:mixing}; then necessarily $r_{1 / 2
    \text{-mix}} (t) \lesssim t^{\frac{1}{2 \alpha}}$.
    
    \item Upper bound. Almost every $u \in B^{\alpha}_{1, \infty}
    (\mathbb{T})$ satisfies the following property: for any $\tilde{\alpha} >
    \alpha$, $u$ is mixing on the scale $H^{1 / 2} (\mathbb{T}; \mathbb{C})$
    with rate $r_{1 / 2 \text{-mix}} (t) \gtrsim t^{\frac{1}{2
    \tilde{\alpha}}}$.
  \end{enumeratealpha}
\end{theorem}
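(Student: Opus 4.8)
Parts~(a) and~(b) are independent. Part~(a) is an elementary Fourier-analytic lower bound: by Definition~\ref{def:mixing} (with $k=1$) it suffices to prove $\|e^{-itu}\|_{H^{1/2}\rightarrow H^{-1/2}}\gtrsim t^{-1/(2\alpha)}$ for all $t\geqslant 1$. Testing the multiplication operator $e^{-itu}$ on constants, and writing $v_t\assign e^{-itu}$ (so that $|v_t|\equiv 1$ and $\|v_t\|_{L^2}=1$), this reduces to $\|v_t\|_{H^{-1/2}}\gtrsim t^{-1/(2\alpha)}$, which I would obtain by showing that the Fourier mass of $v_t$ sits at frequencies $\lesssim t^{1/\alpha}$. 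Indeed, the Nikolskii characterisation $\|u(\cdot+h)-u\|_{L^1}\lesssim|h|^{\alpha}$ of $B^{\alpha}_{1,\infty}(\mathbb{T})$, together with $|e^{ia}-e^{ib}|\leqslant|a-b|$ and $\|v_t\|_{L^{\infty}}=1$, gives $\|v_t(\cdot+h)-v_t\|_{L^2}^2\leqslant\|v_t(\cdot+h)-v_t\|_{L^{\infty}}\,\|v_t(\cdot+h)-v_t\|_{L^1}\lesssim t\,|h|^{\alpha}$; expanding this in Fourier series and averaging over $h$ in an interval of length $\sim M^{-1}$, on which $|e^{ikh}-1|^2$ has mean bounded below whenever $|k|\geqslant M$, forces $\sum_{|k|\geqslant M}|\widehat{v_t}(k)|^2\lesssim t\,M^{-\alpha}$. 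Choosing $M\simeq t^{1/\alpha}$ so this tail is $\leqslant 1/2$ keeps at least half of the unit $L^2$-mass of $v_t$ in $\{|k|<M\}$, whence $\|v_t\|_{H^{-1/2}}^2\geqslant\langle M\rangle^{-1}\sum_{|k|<M}|\widehat{v_t}(k)|^2\gtrsim t^{-1/\alpha}$.

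\textbf{Part~(b).} The analytic input, established separately, is a prevalence statement: there is a Gaussian probe measure supported on $B^{\alpha}_{1,\infty}(\mathbb{T})$ such that for every fixed $u_0\in B^{\alpha}_{1,\infty}(\mathbb{T})$ and almost every $w$ in its support the function $u_0+w$ is $(\rho,\gamma)$-irregular; equivalently, for almost every $u\in B^{\alpha}_{1,\infty}(\mathbb{T})$ and each $\rho<\tfrac1{2\alpha}$ there is $\gamma\in(\tfrac12,1)$ for which the occupation function $F_{\xi}(y)\assign\int_0^y e^{i\xi u(r)}\,\mathd r$ satisfies $\|F_{\xi}\|_{C^{\gamma}(\mathbb{T})}\lesssim|\xi|^{-\rho}$ uniformly in $\xi\neq 0$. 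Granting this, Theorem~\ref{thm:main-thm-inviscid}(b) follows from the deterministic implication \emph{if $u$ is $(\rho,\gamma)$-irregular with $\gamma\in(\tfrac12,1)$, then $\|e^{-i\xi u}\|_{H^{1/2}\rightarrow H^{-1/2}}\lesssim_{\rho'}|\xi|^{-\rho'}$ for every $\rho'<\rho$ and all $|\xi|\geqslant 1$}: given $\tilde\alpha>\alpha$, pick $\rho\in(\tfrac1{2\tilde\alpha},\tfrac1{2\alpha})$ (so that a.e.\ $u$ is $(\rho,\gamma)$-irregular for some $\gamma\in(\tfrac12,1)$) and $\rho'\in(\tfrac1{2\tilde\alpha},\rho)$; since $e^{-itku}=e^{-i(tk)u}$ and $|tk|\geqslant 1$ for $k\in\mathbb{Z}_0$, $t\geqslant 1$, this yields $\|e^{-itku}\|_{H^{1/2}\rightarrow H^{-1/2}}\lesssim(t|k|)^{-\rho'}\leqslant(t|k|)^{-1/(2\tilde\alpha)}$, i.e.\ mixing on $H^{1/2}$ with rate $r_{1/2\text{-mix}}(t)\gtrsim t^{1/(2\tilde\alpha)}$, uniformly in $k$.

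For the deterministic implication I would write $\langle e^{-i\xi u}g,h\rangle_{L^2}=\int_{\mathbb{T}}e^{-i\xi u}\,g\bar h\,\mathd y$, Littlewood--Paley decompose $g\bar h=\sum_{j\geqslant 0}\phi_j$, and bound each frequency-$2^j$ piece with $j\geqslant 1$ by interpolating the trivial estimate $|\int e^{-i\xi u}\phi_j|\leqslant\|\phi_j\|_{L^1}\leqslant\|\phi_j\|_{L^2}$ with a sewing-type estimate $|\int e^{-i\xi u}\phi_j|\lesssim|\xi|^{-\rho}2^{j(1-\gamma)}\|\phi_j\|_{L^2}$; the latter I would get by integrating by parts against $F_{-\xi}$ after subtracting its linear interpolant (so the boundary terms vanish and one may use $\int_{\mathbb{T}}\phi_j=0$), then invoking $\|F_{-\xi}\|_{C^{\gamma}}\lesssim|\xi|^{-\rho}$ and Bernstein's inequality, while the $j=0$ piece is handled directly through $|\widehat{e^{-i\xi u}}(k)|\lesssim|\xi|^{-\rho}$ for bounded $k$. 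Inserting the subcritical product bound $\|g\bar h\|_{H^{\sigma}}\lesssim_{\sigma}\|g\|_{H^{1/2}}\|h\|_{H^{1/2}}$, $\sigma\in(0,\tfrac12)$, which gives $\|\phi_j\|_{L^2}\lesssim 2^{-j\sigma}e_j\|g\|_{H^{1/2}}\|h\|_{H^{1/2}}$ with $(e_j)\in\ell^2$, the resulting dyadic series $\sum_{j}\min(1,|\xi|^{-\rho}2^{j(1-\gamma)})\,2^{-j\sigma}e_j$ is balanced at $2^j\sim|\xi|^{\rho/(1-\gamma)}$ and sums to $\lesssim|\xi|^{-\rho\sigma/(1-\gamma)}\|g\|_{H^{1/2}}\|h\|_{H^{1/2}}$ (the geometric sums converging since $\sigma<1-\gamma$ because $\gamma>\tfrac12$); letting $\sigma\uparrow 1-\gamma$ makes the exponent $\rho\sigma/(1-\gamma)$ as close to $\rho$ as desired.

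\textbf{Main obstacle.} The computations in~(a) and the Littlewood--Paley bookkeeping in~(b) are routine; the crux is the prevalence statement, namely constructing a probe measure on $B^{\alpha}_{1,\infty}(\mathbb{T})$ and proving that $u_0+w$ is $(\rho,\gamma)$-irregular for \emph{every} fixed $u_0$ and almost every $w$, with $\rho$ attaining, in the limit, the sharp threshold $\tfrac1{2\alpha}$, while keeping the parameters precise enough that the loss from $\alpha$ to $\tilde\alpha$ in~(b) is arbitrarily small.
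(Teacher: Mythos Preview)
Your proposal is correct and follows the same underlying ideas as the paper, but with a more hands-on execution in both parts. For~(a), the paper packages your Fourier tail argument as a Besov estimate: it shows directly that $e^{i\xi u}\in B^{\alpha/2}_{2,\infty}$ with norm $\lesssim|\xi|^{1/2}$ (using $|e^{ia}-e^{ib}|\leqslant\sqrt{2}|a-b|^{1/2}$ rather than your $L^\infty\cdot L^1$ split, which amounts to the same thing), and then invokes the interpolation $\|f\|_{L^2}\lesssim\|f\|_{H^{-1/2}}^{\alpha/(1+\alpha)}\|f\|_{B^{\alpha/2}_{2,\infty}}^{1/(1+\alpha)}$; your choice of $M\sim t^{1/\alpha}$ is exactly this interpolation done by hand. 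For~(b), the paper avoids your dyadic bookkeeping entirely by first observing (Lemma~\ref{lem:alternative-rho}) that $(\gamma,\rho)$-irregularity means precisely $\|e^{i\xi u}\|_{B^{\gamma-1}_{\infty,\infty}}\lesssim|\xi|^{-\rho}$, and then applying the one-line paraproduct bound $B^{\gamma-1}_{\infty,\infty}\times H^{1/2}\to B^{\gamma-1}_{2,\infty}\hookrightarrow H^{-1/2}$ (valid since $\gamma>1/2$), which gives the sharp exponent $\rho$ with no loss. Your Littlewood--Paley argument reproves this paraproduct estimate from scratch and incurs the $\rho'\!<\!\rho$ loss through the subcritical product $H^{1/2}\cdot H^{1/2}\subset H^\sigma$; this loss is harmless here since you absorb it into the gap between $\alpha$ and $\tilde\alpha$, but the paper's route is cleaner and worth knowing.
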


\begin{theorem}[Dissipative case $\nu > 0$]
  \label{thm:main-thm-enhanced}Let $\alpha \in (0, 1)$.
  \begin{enumeratealpha}
    \item Lower bound. Suppose that $u \in B^{\alpha}_{1, \infty}
    (\mathbb{T})$ is diffusion enhancing with rate
    $r_{\text{dif}}$, in the sense of
    Definition~\ref{def:diffusion-enhancing}; then necessarily
    $r_{\text{dif}} (\nu) \lesssim \nu^{\frac{\alpha}{\alpha +
    2}}$.
    
    \item Upper bound. Almost every $u \in B^{\alpha}_{1, \infty}
    (\mathbb{T})$ satisfies the following property: for any $\tilde{\alpha} >
    \alpha$, $u$ is diffusion enhancing with rate
    $r_{\text{dif}} (\nu) \gtrsim \nu^{\tilde{\alpha} /
    (\tilde{\alpha} + 2)}$.
  \end{enumeratealpha}
\end{theorem}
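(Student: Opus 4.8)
We prove the two halves separately; only the upper bound needs the genericity assumption, and it is the step I expect to be the main obstacle.

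\smallskip\noindent\emph{Lower bound (a): all $u\in B^{\alpha}_{1,\infty}$.} I would run the probabilistic representation \eqref{eq:dissipative-solution} at a single Fourier mode together with a constant test function. Fix $k\in\mathbb{Z}_0$ and let $f^{k}$ solve \eqref{eq:harmonic-oscillator} with $f^{k}_{0}\equiv1$, so that
\[
 f^{k}_{t}(y)=\mathbb{E}\big[e^{-ikZ_{y}}\big],\qquad Z_{y}:=\int_{0}^{t}u\big(y+\sqrt{2\nu}\,B_{s}\big)\,\mathd s,
\]
$B$ a scalar Brownian motion. Since $1-\cos w\le C_{\beta}|w|^{\beta}$ for any $\beta\in(0,1)$, centring the phase gives $|f^{k}_{t}(y)|\ge 1-C_{\beta}|k|^{\beta}\,\mathbb{E}|Z_{y}-\mathbb{E}Z_{y}|^{\beta}$; and because $\mathbb{E}Z_{y}=\int_{0}^{t}(e^{s\nu\partial_{y}^{2}}u)(y)\,\mathd s$, repeated Jensen inequalities (in the path, in $y$, and against an independent copy $B'$) reduce the right side to increments of $u$ in $L^{1}(\mathbb{T})$, yielding
\[
 \int_{\mathbb{T}}\mathbb{E}\,|Z_{y}-\mathbb{E}Z_{y}|^{\beta}\,\mathd y\ \lesssim\ \Big(\|u\|_{B^{\alpha}_{1,\infty}}\,\nu^{\alpha/2}\!\int_{0}^{t}s^{\alpha/2}\,\mathd s\Big)^{\beta}\ \lesssim\ \big(\|u\|_{B^{\alpha}_{1,\infty}}\,\nu^{\alpha/2}\,t^{\,1+\alpha/2}\big)^{\beta},
\]
using $\|u(\cdot+h)-u\|_{L^{1}(\mathbb{T})}\le\|u\|_{B^{\alpha}_{1,\infty}}|h|^{\alpha}$ and $\mathbb{E}|B_{s}-B_{s}'|^{\alpha}\simeq s^{\alpha/2}$. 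Hence $\|f^{k}_{t}\|_{L^{2}}^{2}\ge 1-C(|k|\,\nu^{\alpha/2}t^{\,1+\alpha/2})^{\beta}$, so $\|f^{k}_{t}\|_{L^{2}}\ge\tfrac12$ whenever $t\le c(\alpha,\|u\|_{B^{\alpha}_{1,\infty}})\,|k|^{-2/(\alpha+2)}\nu^{-\alpha/(\alpha+2)}$. Taking $k=1$, $t=c\,\nu^{-\alpha/(\alpha+2)}\ (\ge1$ for $\nu$ small$)$, Definition~\ref{def:diffusion-enhancing} forces $C\exp(-r_{\mathrm{dif}}(\nu)t)\ge\tfrac12$, i.e.\ $r_{\mathrm{dif}}(\nu)\lesssim\nu^{\alpha/(\alpha+2)}$; the remaining range of $\nu$ follows from monotonicity of $r_{\mathrm{dif}}$.

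\smallskip\noindent\emph{Upper bound (b): reduction.} By the scaling $\tilde t=|k|t,\ \tilde\nu=\nu/|k|$ noted after \eqref{eq:harmonic-oscillator}, it is enough to show that for a.e.\ $u$ and every $\tilde\alpha>\alpha$ there is $C$ with $\|e^{t(-iu+\nu\partial_{y}^{2})}\|_{L^{2}\to L^{2}}\le C\exp(-c\,t\,\nu^{\tilde\alpha/(\tilde\alpha+2)})$ for $\nu\in(0,1]$, $t\ge1$. The plan is to get this from a \emph{resolvent estimate} plus the quantitative Gearhart--Prüss theorem. Since $\operatorname{Re}\langle(-iu+\nu\partial_{y}^{2})f,f\rangle=-\nu\|\partial_{y}f\|_{L^{2}}^{2}\le0$ the semigroup is an $L^{2}$-contraction, so once one proves
\[
 \sup_{\lambda\in\mathbb{R}}\ \big\|(i\lambda+iu-\nu\partial_{y}^{2})^{-1}\big\|_{L^{2}\to L^{2}}\ \le\ M(\nu):=C\,\nu^{-\tilde\alpha/(\tilde\alpha+2)},
\]
Gearhart--Prüss gives $\|e^{t(-iu+\nu\partial_{y}^{2})}\|\le e\exp(-t/(eM(\nu)))$, and undoing the rescaling produces Definition~\ref{def:diffusion-enhancing} with $r_{\mathrm{dif}}(\nu)\gtrsim\nu^{\tilde\alpha/(\tilde\alpha+2)}$. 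I would also remark why the cruder scheme of the Introduction is not enough: iterating the inviscid mixing estimate of Theorem~\ref{thm:main-thm-inviscid}(b) against the energy identity over windows of length $\sim\nu^{-\tilde\alpha/(\tilde\alpha+2)}$ only propagates an $H^{-1/2}$ bound, because $B^{\alpha}_{1,\infty}$ functions do not act boundedly on $H^{1/2}$, and bridging from $H^{-1/2}$-smallness to $L^{2}$-smallness costs exactly the suboptimal exponent $q_{1/2}$ of \cite{zelati2020relation}; the resolvent bound is stronger precisely because it is uniform along the whole imaginary axis, which no finite-time estimate sees.

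\smallskip\noindent\emph{Upper bound (b): the resolvent estimate.} This is the core and the hardest point. It is the $B^{\alpha}_{1,\infty}$-analogue of the $C^{\alpha}$ resolvent bound of \cite{wei}, which I would establish in Appendix~\ref{app:wei-extension} by a hypocoercivity/testing argument: bound $\|(i\lambda+iu-\nu\partial_{y}^{2})f\|_{L^{2}}$ from below by pairing the equation with $f$ and with a suitably twisted test function, producing the diffusive term $\nu\|\partial_{y}f\|^{2}$, the oscillatory term $\int_{\mathbb{T}}(u+\lambda)|f|^{2}$, and a correction of the schematic form $\nu\,\langle\partial_{y}u,\,|f|^{2}\rangle$ involving the mere distribution $\partial_{y}u\in B^{\alpha-1}_{1,\infty}$; optimising the balance at the scale $\delta\sim\nu^{1/(\tilde\alpha+2)}$ yields $M(\nu)\sim\nu^{-\tilde\alpha/(\tilde\alpha+2)}$. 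The two delicate points are (i) making sense of and controlling the rough correction term, and (ii) a quantitative non-degeneracy of the oscillation of $u$ at \emph{every} scale, so that the enhanced mechanism is active on all of $\mathbb{T}$ and not only where $u$ is steep. Both are to be supplied by the genericity input already used for Theorem~\ref{thm:main-thm-inviscid}(b): a prevalent $u\in B^{\alpha}_{1,\infty}$ is such that the graph $y\mapsto(y,u(y))$ is $(\rho,\gamma)$-irregular for every $\rho<1/(2\alpha)$, which controls the oscillatory integrals $\int_{I}e^{i\xi(u(y)+\lambda)}\,\mathd y$ entering both (i) and (ii). Feeding the resulting $M(\nu)$ into Gearhart--Prüss as above then closes the proof.
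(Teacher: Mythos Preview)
Your lower bound~(a) is essentially the paper's argument: both feed Besov regularity of $u$ into the Feynman--Kac phase and show that $\|f^k_t\|_{L^2}$ cannot drop below a fixed fraction before time $\sim\nu^{-\alpha/(\alpha+2)}$. The paper packages this via the Lagrangian FDR $\|g_0\|_{L^2}^2-\|g_t\|_{L^2}^2=\int_{\mathbb{T}}\operatorname{Var}(Z^y_t)\,\mathd y$ and the identity $2\operatorname{Var}(X)=\mathbb{E}|X-\tilde X|^2$ with an independent copy, then tests on $g_0(y)=e^{iy}$; your moment inequality with $f_0\equiv1$ is a cosmetic variant.

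Part~(b), however, has a real gap. You propose to feed the \emph{$\rho$-irregularity} of a generic $u$ into a resolvent bound for $-iu+\nu\partial_y^2$. The paper explicitly flags this as open: deducing the optimal diffusion rate $\nu^{\alpha/(\alpha+2)}$ from $\rho$-irregularity with $\rho\sim(2\alpha)^{-1}$ is not known, and the best available bridge (via \cite{zelati2020relation}) only gives the worse exponent $\nu^{3\alpha/(3\alpha+1)}$. Your sketch of the resolvent estimate---pairing with $f$ and a ``twisted'' test, producing a correction $\nu\langle\partial_y u,|f|^2\rangle$ controlled by oscillatory integrals $\int_I e^{i\xi(u+\lambda)}\,\mathd y$---does not actually close: $\rho$-irregularity bounds $\int_I e^{i\xi u}$, not the level-set/local-scale non-degeneracy that the resolvent argument needs.

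What the paper does instead is identify a \emph{different} irregularity property, Wei's condition
\[
\Gamma_\beta(u)=\Big(\inf_{|I|<1}|I|^{-2\beta-3}\inf_{P\in\mathcal P_1}\int_I|\psi-P|^2\Big)^{1/2}>0,\qquad \psi=\textstyle\int_0^{\cdot}u,
\]
which is precisely the quantitative ``oscillation at every scale'' input Wei's resolvent estimate requires, and which immediately yields the rate $\nu^{\beta/(\beta+2)}$ via Gearhart--Pr\"uss. The prevalence of $\Gamma_\beta(u)>0$ for all $\beta>\alpha$ is then proved by a separate probabilistic argument: one uses that for fBm $u\sim\mu^H$ the \emph{primitive} $\psi$ is $(1{+}H)$-SLND, and combines a Khasminskii-type exponential bound with negative-moment estimates for Gaussian variables to control $\int_I|\Delta_\delta^2\psi|^{-1/(1+\alpha)}$. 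None of this machinery is visible from the $\rho$-irregularity side; in particular the transverse measure acts on $\psi$, not on $u$, and the relevant LND exponent is $1{+}H$ rather than $H$. So for~(b) you would need to replace the $\rho$-irregularity input by Wei's condition and supply the corresponding prevalence argument.
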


Theorems~\ref{thm:main-thm-inviscid} and~\ref{thm:main-thm-enhanced} will be
proven respectively in Sections~\ref{sec:inviscid-mixing}
and~\ref{sec:enhanced-dissipation}, which are structured in a very similar
way. Roughly speaking, the strategy we adopt in proving upper and lower bounds
may be summarized in three main steps:
\begin{enumeratenumeric}
  \item In both cases, the lower bound follows from estimates which explicitly
  employ the regularity assumption $u \in B^{\alpha}_{1, \infty}$; in the case
  $\nu > 0$, we need to preliminary establish a Lagrangian
  Fluctuation-Dissipation relation for the PDE~\eqref{eq:harmonic-oscillator}
  \ (see Proposition~\ref{prop:FDR}) similarly in spirit to what was done
  in~{\cite{zelatidrivas}}.
  
  \item The upper bound is satisfied by any $u$ enjoying a suitable analytic
  property, which encodes its \tmtextit{irregularity}. It turns out that the
  right properties are given respectively by $\rho$-irregularity (see
  Definition~\ref{def:rho-irr}) for $\nu = 0$ and by Wei's irregularity
  condition (see Definition~\ref{def:wei-condition}) for $\nu > 0$. A shear
  flow $u$ satisfying any of such properties necessarily enjoys only limited
  regularity in the scales $B^{\alpha}_{1, \infty}$ (see
  Proposition~\ref{prop:rho-irr-besov-nikolskii} and
  Lemma~\ref{lem:irr-wei}), confirming that these are the correct spaces to
  work with.
  
  \item Finally, we show that a.e. $u \in B^{\alpha}_{1, \infty}$ is
  $\rho$-irregular (resp. satisfies Wei's condition), see
  Section~\ref{sec:prevalence-mixing} (resp.
  Section~\ref{sec:prevalence-enhanced}). This is achieved by probabilistic
  methods, using the law of fractional Brownian motions (see
  Section~\ref{sec:fbm} for details) to construct a measure witnessing the
  prevalence of such properties.
\end{enumeratenumeric}
\begin{remark}
  \label{rem:intro}Let us stress that points~\tmtextit{a}) of
  Theorems~\ref{thm:main-thm-inviscid}-\ref{thm:main-thm-enhanced} hold for
  \tmtextit{all} $u \in B^{\alpha}_{1, \infty}$, not only generic elements.
  Since $\mathbb{T}$ is finite, we have the embeddings $B^{\alpha}_{p, q}
  \hookrightarrow B^{\alpha}_{1, \infty}$ for any $p, q \in [1, \infty]$, thus
  the lower bound is true for all $u \in B^{\alpha}_{p, q}$ as well. On the
  other hand, the proofs of points~\tmtextit{b}) of
  Theorems~\ref{thm:main-thm-inviscid}-\ref{thm:main-thm-enhanced} can be
  easily readapted to provide the same statements for almost every $u \in
  B^{\alpha}_{p, q}$, for any choice of $p, q \in [1, \infty]$.
  
  In particular, one could always work with the spaces $C^{\alpha} =
  B^{\alpha}_{\infty, \infty}$ if desired. There are however several reasons
  for working with $B^{\alpha}_{1, \infty}$ or more generally $B^{\alpha}_{p,
  q}$ instead of $C^{\alpha}$.
  
  Mathematically, such spaces include genuinely discontinuous functions, as
  well as (possibly continuous) functions of finite $p$-variation for any $p
  \in [1, \infty]$: it holds
  \[ B^{1 / p}_{p, 1} \hookrightarrow V^p_c \hookrightarrow V^p
     \hookrightarrow B^{1 / p}_{p, \infty}, \]
  see Proposition 4.3 from~{\cite{liu2020}}, Proposition 2.3
  from~{\cite{friz2021}} for more details.
  
  Physically, a simple way to explain singularities in fully developed
  turbulence is by means of structure functions (see
  e.g.~{\cite{frisch1980}}), which are closely related to the finite
  difference characterization of Besov spaces $B^{\alpha}_{p, \infty}$.
  Turbulence is also believed to be closely connected to multifractality
  (again we refer to the appendix of~{\cite{frisch1980}}), a feature which is
  absent from generic $u \in C^{\alpha}$ (which are monofractal) but instead
  manifested by almost every $u \in B^{\alpha}_{p, q}$,
  see~{\cite{jaffard2000,fraysse2006,fraysse2010}}.
  
  Our results show that the only relevant parameter in understanding mixing
  and enhanced dissipation rates for a.e. $u \in B^{\alpha}_{p, q}$ is $\alpha
  \in (0, 1)$, regardless of the values of $p, q$; thus there is no apparent
  connection between mixing and multifractal features of $u$, at
  least in the setting of shear flows.
\end{remark}

\tmtextbf{Structure of the paper.} In Section~\ref{sec:preliminaries} we
shortly recall some of the main tools we will be working with, specifically
the theory of prevalence and a relevant class of Gaussian
processes, which includes fractional Brownian motion.

Sections~\ref{sec:inviscid-mixing} and~\ref{sec:enhanced-dissipation} contain
the proofs of Theorems~\ref{thm:main-thm-inviscid}
and~\ref{thm:main-thm-enhanced} and are designed in a similar manner: in both
cases we will first prove the lower bound, then introduce the concept of
$\rho$-irregularity (resp. Wei's condition) and explain its connection to the
upper bound, as well as to the irregularity of $u$; finally, we show by
probabilistic means that a.e. $u \in B^{\alpha}_{1, \infty}$ satisfies such
property. The end of Section~\ref{sec:enhanced-dissipation} also contains the
proof of Theorem~\ref{thm:main-thm-1}.

In Appendix~\ref{app:besov} we collect some well known results on Besov
spaces, while Appendix~\ref{app:wei-extension} contains a technical extension
of the results from~{\cite{wei}} needed to work in our setting.

\

\tmtextbf{Acknowledgments.} The authors were supported by the Deutsche
Forschungsgemeinschaft (DFG, German Research Foundation) through the Hausdorff
Center for Mathematics under Germany's Excellence Strategy -- EXC-2047/1 --
390685813 and through CRC 1060 - projekt number 211504053.

\

{\tmstrong{Notations and conventions.}} We will use the notation $a \lesssim
b$ to mean that there exists a constant $c > 0$ such that $a \leqslant c b$;
$a \lesssim_x b$ highlights the dependence $c = c (x)$. The notation $a \sim
b$ stands for $a \lesssim b$ and $b \lesssim a$, similarly for $a \sim_x b$.

Whenever needed, we will identify the $d$-dimensional torus $\mathbb{T}^d$
with either $[0, 2 \pi]^d$ or $[- \pi, \pi]^d$ with periodic boundary
condition, and functions $\varphi : \mathbb{T}^d \rightarrow \mathbb{R}$ with
$2 \pi$-periodic functions defined on $\mathbb{R}^d$. We will use
$d_{\mathbb{T}^d} (x, y)$ to denote the canonical distance on the flat torus
$\mathbb{T}^d$, namely $d_{\mathbb{T}^d} (x, y) = \inf_{k \in \mathbb{Z}^d} |
x + 2 \pi k - y |$, where $| \cdot |$ denotes the Euclidean distance on
$\mathbb{R}^d$. With a slight abuse, we will keep writing $| x |$ for $x \in
\mathbb{T}^d$ to denote $d_{\mathbb{T}^d} (x, 0)$.

$L^p (\mathbb{T}^d)$ denotes classical Lebesgue spaces, $C^{\alpha}
(\mathbb{T}^d)$ H\"{o}lder spaces and $H^s (\mathbb{T}^d) = W^{s, 2}
(\mathbb{T}^d)$ fractional Sobolev spaces.
$B^{\alpha}_{p, q} (\mathbb{T}^d)$ denotes Besov spaces on $\mathbb{T}^d$; we refer to Appendix~\ref{app:besov} for a detailed discussion of their definition and main properties. Here let us shortly recall, that for $\alpha\in (0,1)$ and $p\in [1,\infty)$, $f \in B^\alpha_{p,q}(\mathbb{T}^d)$ if and only if $f\in L^p(\mathbb{T}^d)$ and it has finite Gagliardo-Niremberd type seminorm
\begin{equation}
\llbracket f\rrbracket_{B^\alpha_{p,\infty}(\mathbb{T}^d)}:= \sup_{x \neq y
     \in \mathbb{T}^d} \frac{\left\| f \left( \cdot \, + x \right) - f \left(
     \cdot \, + y \right) \right\|_{L^p}}{d_{\mathbb{T}^d} (x, y)^s};
\end{equation}
see equations \eqref{eq:besov-equiv-norm-1}-\eqref{eq:besov-equiv-norm-2} for more details.
Similarly, $B^{\alpha}_{p, q} (0, \pi)$ denotes
Besov spaces on $[0, \pi]$.

Given $p \in [1, \infty)$ and a compact interval $I \subset
\mathbb{R}$, we denote by $V^p = V^p (I)$ the Banach space of functions $f : I
\rightarrow \mathbb{R}$ of finite $p$-variation, with norm
\[ \| f \|_{V^p} = | f (0) | + \sup_{\pi \in \Pi (I)} \left( \sum_{[t_i, t_{i
   + 1}] \in \pi} | f (t_{i + 1}) - f (t_i) |^p \right)^{\frac{1}{p}} \]
where the supremum is taken over the set $\Pi (I)$ of all finite partition of
$I$, identified with sequences $\{ t_i \}_{i = 0}^n$ such that $\min I = t_0 <
t_1 < \cdots < t_n = \max I$. $V^p_c$ stands for the closed subspace of $V^p$
of continuous functions. $V^p (\mathbb{T})$ is defined by identifying
$\mathbb{T}$ with the interval $[- \pi, \pi]$.

Whenever a stochastic process $X = (X_t)_{t \geqslant 0}$ is considered, if
not specified we tacitly assume the existence of an abstract underlying
filtered probability space $(\Omega, \mathcal{F}, (\mathcal{F}_t)_{t \geqslant
0}, \mathbb{P})$, such that the $\sigma$-algebra $\mathcal{F}$ and the
filtration $(\mathcal{F}_t)_{t \geqslant 0}$ satisfy the usual assumptions and
$(X_t)_{t \geqslant 0}$ is adapted to $(\mathcal{F}_t)_{t \geqslant 0}$.
Whenever we say that $(\mathcal{F}_t)_{t \geqslant 0}$ is the natural
filtration generated by $X$, then it is tacitly implied that it is actually
its right continuous, normal augmentation wrt. $\mathbb{P}$. We denote by
$\mathbb{E}$ integration (equiv. expectation) wrt. the probability
$\mathbb{P}$.

\section{Preliminaries}\label{sec:preliminaries}

\subsection{Prevalence}\label{sec:prevalence}

The theory of prevalence has been developed by Hunt, Sauer and Yorke
in~{\cite{huntsauer}} in order to provide a measure theoretic notion of
genericity in infinite dimensional spaces. It is a natural generalization of
the concept of ``full Lebesgue measure sets'' from the finite dimensional
setting. We follow here the exposition given in~{\cite{huntsauer}}, although
for our purposes it will be enough to work with Banach spaces $E$.

\begin{definition}
  \label{def:prevalence}Let $E$ be a complete metric vector space. A Borel set
  $A \subset E$ is said to be {\tmdfn{shy}} if there exists a measure $\mu$
  such that:
  \begin{enumerateroman}
    \item There exists a compact set $K \subset E$ such that $0 < \mu (K) <
    \infty$.
    
    \item For every $v \in E$, $\mu (v + A) = 0$.
  \end{enumerateroman}
  In this case, the measure $\mu$ is said to be {\tmdfn{transverse}} to $A$.
  More generally, a subset of $E$ is shy if it is contained in a shy Borel
  set. The complement of a shy set is called a {\tmdfn{prevalent}} set.
\end{definition}

Sometimes it is said informally that the measure $\mu$ ``witnesses'' the
prevalence of $A^c$.

It follows immediately from Point~\tmtextit{i.} of
Definition~\ref{def:prevalence} that, if such a measure $\mu$
exists, then it can be assumed to be a compactly supported probability measure
on $E$. On the other hand, in order to exhibit the existence of $\mu$
satisfying Points.~\tmtextit{i.}-\tmtextit{ii.}, it suffices to find another
{\tmem{tight}} probability measure $\tilde{\mu}$ only satisfying
requirement~\tmtextit{ii.} If $E$ is separable, then any probability measure
on $E$ is tight and therefore Point~\tmtextit{i.} is automatically satisfied.

\

The following properties hold for prevalence (all proofs can be found
in~{\cite{huntsauer}}):
\begin{enumeratenumeric}
  \item If $E$ is finite dimensional, then a set $A$ is shy if and only if it
  has zero Lebesgue measure.
  
  \item If $A$ is shy, then so is $v + A$ for any $v \in E$.
  
  \item Prevalent sets are dense.
  
  \item If $\dim (E) = + \infty$, then compact subsets of $E$ are shy.
  
  \item Countable union of shy sets is shy; conversely, countable intersection
  of prevalent sets is prevalent. 
\end{enumeratenumeric}
From now, whenever we say that a statement holds for a.e. $v \in E$, we mean
that the set of elements of $E$ for which the statement holds is a prevalent
set. Property~1. states that this convention is consistent with the finite
dimensional case.

In the context of a function space $E$, it is natural to consider as
probability measure the law induced by an $E$\mbox{-}valued random variable.
Namely, given stochastic process $W$ defined on a probability space $(\Omega,
\mathcal{F}, \mathbb{P})$ taking values in a separable Banach space $E$, in
order to show that a property $\mathcal{P}$ holds for a.e. $f \in E$, it
suffices to show that
\begin{equation}
  \mathbb{P} \left( \text{$f + W$ satisfies property $\mathcal{P}$} \right) =
  1, \qquad \forall \, f \in E. \label{eq:key-prevalence}
\end{equation}
Clearly, we are assuming that the set $A = \left\{ w \in E : \text{$w$
satisfies property $\mathcal{P}$} \right\}$ is Borel measurable; if $E$ is not
separable, we need to additionally require that the law of $W$ is tight, so as
to satisfy Point~\tmtextit{i.} of Definition~\ref{def:prevalence}.

As a consequence of properties~4. and~5., the set of all possible realizations
of a probability measure $\mu$ on a separable infinite dimensional Banach
space is a shy set, as it is contained in a countable union of compact sets
(this is true more in general for any tight measure on a Banach space). This
fact highlights the difference between a statement of the form ``Property
$\mathcal{P}$ holds for a.e. $f$ (in the sense of prevalence)'' and ``Property
$\mathcal{P}$ holds for $\mu$-a.e. $f$''; indeed, the second statement doesn't
provide any information regarding whether the property might be prevalent or
not. Intuitively, the elements satisfying a prevalence statement are ``many
more'' than just the realizations of a given measure $\mu$.

\subsection{A useful class of Gaussian transverse measures}\label{sec:fbm}

From now on, given an interval $[0, T]$ and a probability measure $\mu$ on $C
([0, T])$, we will denote by $(X_t)_{t \in [0, T]}$ the associated canonical
process, which is given by $X_t (\omega) = \omega (t)$ for $\omega \in C ([0,
T])$, and by $\mathcal{F}_t = \sigma (\{ X_s, s \leqslant t \})$ the
associated natural filtration.

A key point of the present work is to verify that suitable properties
$\mathcal{P}$ are satisfied by a.e. $f \in E$ for suitable $E = B^{\alpha}_{1,
\infty}$. The discussion from Section~\ref{sec:prevalence}, in particular
equation~\eqref{eq:key-prevalence}, suggests to look for classes of processes
which are stable under deterministic additive perturbations and
in~{\cite{galeati2020prevalence}} we identified the \tmtextit{local
nondeterministic} (LND) Gaussian processes as a useful class in the study of
prevalence in function spaces. We recall in the next definition that a real
valued process $X$ is Gaussian if for any $n \in \mathbb{N}$ and $t_1, \ldots,
t_n \in [0, T]$, $(X_{t_1}, \ldots, X_{t_n})$ is a $\mathbb{R}^n$-valued
Gaussian variable.

\begin{definition}
  \label{def:LND}Given $\beta > 0$, a real valued Gaussian process $X$ is
  {\tmdfn{strongly locally nondeterministic}} with parameter $\beta$,
  $\beta$-SLND for short, if there exists a constant $C_X$ such that
  \begin{equation}
    \tmop{Var} (X_t | \mathcal{F}_s) \geqslant C_X | t - s |^{2 \beta}
    \label{eq:LND}
  \end{equation}
  uniformly over $s, t \in [0, T]$ with $s < t$.
\end{definition}

In~\eqref{eq:LND} above, $\tmop{Var} \left( \cdot \, | \mathcal{F}_s \right)$
denotes the conditional variance; equivalently, Definition~\ref{def:LND}
amount to the condition that, for any $s < t$, there is a decomposition $X_t =
X^{(1)}_{s, t} + X^{(2)}_{s, t}$ where $X^{(1)}_{s, t}$ is Gaussian and
adapted to $\mathcal{F}_s$ while $X^{(2)}_{s, t}$ is Gaussian, independent of
$\mathcal{F}_s$, with variance $\tmop{Var} (X^{(2)}_{s, t}) \geqslant C_X  | t
- s |^{2 \beta}$. The increments of the process $X$ are therefore
``intrinsically chaotic'' in a way that can be quantified precisely by the
parameter $\beta$. Let us shortly mention that Definition~\ref{def:LND} is not
the only notion of LND in the literature and there are several non-equivalent
ones; see~{\cite{xiao}} for a review.

The importance of the $\beta$-SLND property comes from the following
elementary fact, which can be readily checked from the definition (see also
Remark~26 from~{\cite{galeati2020prevalence}}); in the statement, $f : [0, T]
\rightarrow \mathbb{R}$ can be naturally unbounded.

\begin{lemma}
  \label{lem:LND-translation}Let $\{ X_t \}_{t \in [0, T]}$ be a $\beta$-SLND
  Gaussian process and $f : [0, T] \rightarrow \mathbb{R}$ be a measurable
  function; then $X + f$ is also a $\beta$-SLND Gaussian process.
\end{lemma}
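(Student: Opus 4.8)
The plan is to verify Definition~\ref{def:LND} directly for the perturbed process $Y := X + f$, exploiting the fact that adding a deterministic (measurable) function neither changes the Gaussianity of finite-dimensional marginals nor the conditional variances, provided one is careful about which filtration is used. First I would fix notation: let $(\mathcal{F}_t)$ be the natural filtration of $X$ and $(\mathcal{G}_t)$ the natural filtration of $Y = X + f$. Since $f$ is deterministic, $\mathcal{G}_t = \sigma(\{Y_s : s \leqslant t\}) = \sigma(\{X_s + f(s) : s \leqslant t\}) = \sigma(\{X_s : s \leqslant t\}) = \mathcal{F}_t$ for every $t$ (a deterministic shift is an invertible, measurable transformation of each coordinate). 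Hence the two filtrations coincide, and conditioning $Y_t$ on $\mathcal{G}_s$ is the same as conditioning on $\mathcal{F}_s$.

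Next I would check the Gaussian property of $Y$. For any $t_1, \ldots, t_n \in [0,T]$, the vector $(Y_{t_1}, \ldots, Y_{t_n}) = (X_{t_1}, \ldots, X_{t_n}) + (f(t_1), \ldots, f(t_n))$ is a deterministic translate of a Gaussian vector, hence Gaussian. So $Y$ is a Gaussian process. Then I would compute the conditional variance: for $s < t$,
\[
  \operatorname{Var}(Y_t \mid \mathcal{G}_s) = \operatorname{Var}(X_t + f(t) \mid \mathcal{F}_s) = \operatorname{Var}(X_t \mid \mathcal{F}_s),
\]
because $f(t)$ is a constant and conditional variance is unaffected by adding a constant (more precisely, by adding any $\mathcal{F}_s$-measurable random variable, and a fortiori a deterministic one). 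Applying the $\beta$-SLND hypothesis on $X$, this is $\geqslant C_X |t-s|^{2\beta}$ uniformly in $s < t$, which is exactly \eqref{eq:LND} for $Y$ with the same constant $C_X$. This establishes that $Y = X + f$ is $\beta$-SLND.

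The only genuinely delicate point — and the one I would state carefully rather than wave at — is the measure-theoretic subtlety in the identity $\mathcal{G}_t = \mathcal{F}_t$ and, relatedly, in the conditional-variance step when $f$ is unbounded. The equality of the raw generated $\sigma$-algebras is immediate; one should remark that since the paper's convention (stated in the Notations section) is that natural filtrations are tacitly replaced by their right-continuous $\mathbb{P}$-augmentations, and augmentation is a deterministic operation on a fixed $\sigma$-algebra, the augmented versions agree as well. For the conditional variance, even if $f(t)$ is large, it is still a real constant, so $\mathbb{E}[(Y_t - \mathbb{E}[Y_t \mid \mathcal{F}_s])^2 \mid \mathcal{F}_s] = \mathbb{E}[(X_t - \mathbb{E}[X_t \mid \mathcal{F}_s])^2 \mid \mathcal{F}_s]$ with no integrability obstruction beyond the one already implicit in $X$ being Gaussian (all moments finite). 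I expect no real obstacle here; the content of the lemma is precisely that this soft argument goes through, and the proof is a matter of recording these three observations — filtration invariance, preservation of Gaussianity, invariance of conditional variance under constant shifts — in order.
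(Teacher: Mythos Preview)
Your proposal is correct and is precisely the direct verification the paper has in mind: the paper does not spell out a proof at all, stating only that the lemma ``can be readily checked from the definition'' (with a pointer to Remark~26 of~\cite{galeati2020prevalence}). Your three observations---coincidence of the natural filtrations under a deterministic shift, preservation of Gaussianity, and invariance of conditional variance under addition of a constant---are exactly what is needed, and your handling of the augmentation/unboundedness caveats is appropriate.
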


Lemma~\ref{lem:LND-translation} will be our main leverage to establish
prevalence statements, as it reduces the difficulty to that of verifying that
any $\beta$-SLND Gaussian process satisfies $\mu$-a.s. the property
$\mathcal{P}$ of interest; this will indeed be the strategy implemented in
Sections~\ref{sec:prevalence-mixing} and~\ref{sec:prevalence-enhanced}
respectively.

\

In this sense, we could work with any possible Gaussian law $\mu$ whose
associated canonical process is $\beta$-SLND, without further specification.
To keep things less abstract, we will however use a well-known one-parameter
family from this class, which are the laws $\left\{ \mu^H, \, H \in (0, 1)
\right\}$ of fractional Brownian motion (fBm) of parameter $H \in (0, 1)$. The
material recalled next is mostly classical and can be found in the
monograph~{\cite{nualart}}.

The law of fBm of Hurst parameter $H \in (0, 1)$ is defined as the unique
Gaussian measure $\mu^H$ on $\Omega = C ([0, T])$ such that
\[ \int_{\Omega} X_t (\omega) \mu^H (\mathd \omega) = 0, \quad \int_{\Omega}
   X_t (\omega) X_s (\omega) \mu^H (\mathd \omega) = \frac{1}{2} (| t |^{2 H}
   + | s |^{2 H} - | t - s |^{2 H}) . \]
For $H = 1 / 2$, the law of fBm corresponds to the classical Wiener measure;
instead for $H \neq 1 / 2$, the associated canonical process $X$ is not a
semimartingale nor a Markov process.

The support of $\mu^H$ in terms of Besov spaces is well understood, with sharp
results going back to~{\cite{ciesielski1993}} (see also~{\cite{veraar2009}}
for a modern proof which extends to the vector valued case): it holds
\[ \mu^H (C^{H - \varepsilon}) = 1 \quad \forall \, \varepsilon > 0, \quad
   \mu^H (B^H_{p, \infty}) = 1 \quad \forall \, p \in [1, \infty), \]
while
\[ \mu^H (C^H) = 0, \quad \mu^H (B^H_{p, q}) = 0 \quad \forall \, p, q \in [1,
   \infty) . \]
In particular fBm trajectories are sharply not $H$-H\"{o}lder continuous, but
by Ascoli--Arzel{\`a} $\mu^H$ is a tight probability measure on $B^{H -
\varepsilon}_{p, \infty}$ for any $\varepsilon > 0$ and any $p \in [1,
\infty]$. As promised, this class of Gaussian measures does satisfy the LND
property.

\begin{lemma}
  \label{lem:fBm-LND}Let $X$ be the canonical process associated to $\mu^H$,
  $H \in (0, 1)$. Then $X$ is $H$-SLND; moreover, the Gaussian process $Y_t :
  = \int_0^t X_s \mathd s$ is $(1 + H)$-SLND.
\end{lemma}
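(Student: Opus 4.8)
The plan is to verify the two SLND claims essentially by hand, exploiting the scaling structure of fractional Brownian motion. For the first assertion, the key is that for a centred Gaussian vector, the conditional variance $\operatorname{Var}(X_t\mid\mathcal{F}_s)$ with $\mathcal{F}_s=\sigma(X_r:r\le s)$ equals the squared $L^2$-distance from $X_t$ to the closed linear span $\overline{\operatorname{span}}\{X_r:r\le s\}$ in $L^2(\Omega)$. So I would bound below
\[
\operatorname{Var}(X_t\mid\mathcal{F}_s)\;\ge\;\operatorname{Var}\!\bigl(X_t-X_s\mid\mathcal{F}_s\bigr)\quad\text{is \emph{not} the right move};
\]
instead the cleanest route is the classical one: $\operatorname{Var}(X_t\mid\mathcal{F}_s)=\inf\{\mathbb{E}|X_t-Z|^2: Z\in\overline{\operatorname{span}}\{X_r:r\le s\}\}$, and it is a standard fact (going back to the local nondeterminism literature, e.g.\ \cite{xiao}) that for fBm of Hurst parameter $H$ one has $\operatorname{Var}(X_t\mid X_{r_1},\dots,X_{r_n})\gtrsim \min_i|t-r_i|^{2H}$; specialising to the past $\{r\le s\}$ gives $\operatorname{Var}(X_t\mid\mathcal{F}_s)\gtrsim |t-s|^{2H}$, which is exactly \eqref{eq:LND} with $\beta=H$. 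If one prefers a self-contained argument, one can use the Mandelbrot--Van Ness moving-average representation $X_t=c_H\int_{\mathbb{R}}\bigl[(t-r)_+^{H-1/2}-(-r)_+^{H-1/2}\bigr]\,dW_r$: conditioning on $\mathcal{F}_s$ removes only the part of the Wiener integral supported on $(-\infty,s]$, so $\operatorname{Var}(X_t\mid\mathcal{F}_s)\ge c_H^2\int_s^t (t-r)^{2H-1}\,dr = \tfrac{c_H^2}{2H}|t-s|^{2H}$, giving $C_X=c_H^2/(2H)$.

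For the second assertion, about $Y_t=\int_0^t X_s\,ds$, I would use the same moving-average representation and the stochastic Fubini theorem to write $Y_t$ as a Wiener integral: $Y_t=c_H\int_{\mathbb{R}}\bigl[\int_0^t\{(u-r)_+^{H-1/2}-(-r)_+^{H-1/2}\}\,du\bigr]dW_r=:c_H\int_{\mathbb{R}}K_t(r)\,dW_r$. Since $Y$ is then a linear functional of the same white noise $W$, conditioning on $\mathcal{F}_s=\sigma(X_r:r\le s)=\sigma(W_r:r\le s)$ again only deletes the contribution of $W$ on $(-\infty,s]$, so $\operatorname{Var}(Y_t\mid\mathcal{F}_s)\ge c_H^2\int_s^t K_t(r)^2\,dr$. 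For $r\in(s,t)$ we have $\int_0^t(u-r)_+^{H-1/2}\,du=\int_r^t(u-r)^{H-1/2}\,du=\tfrac{1}{H+1/2}(t-r)^{H+1/2}$, while the second ($(-r)_+$) term vanishes for $r>0$ (and one checks the boundary case $r<0$ does not occur here since $s\ge 0$ in the relevant range; for $s<0$ one argues on $(0,t)$ instead), hence $K_t(r)\gtrsim (t-r)^{H+1/2}$ on a subinterval of $(s,t)$ of length comparable to $t-s$, yielding $\operatorname{Var}(Y_t\mid\mathcal{F}_s)\gtrsim\int_s^t(t-r)^{2H+1}\,dr\sim|t-s|^{2H+2}$, i.e.\ $Y$ is $(1+H)$-SLND.

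The main technical obstacle is handling the nonlocal term $(-r)_+^{H-1/2}$ in the Mandelbrot--Van Ness kernel cleanly — it is what makes fBm non-Markov — together with making the stochastic Fubini step and the identification $\mathcal{F}^X_s=\mathcal{F}^W_s\cap(\text{past})$ rigorous. If this bookkeeping feels heavy, the alternative I would fall back on is purely covariance-based: $\operatorname{Var}(Y_t\mid\mathcal{F}_s)\ge \operatorname{Var}\bigl(Y_t\mid Y_s, \{X_r\}_{r\le s}\bigr)$ can be estimated by noting $Y_t-Y_s=\int_s^t X_u\,du$ and $X_u=X_s+(X_u-X_s)$, so $Y_t-Y_s-(t-s)X_s=\int_s^t(X_u-X_s)\,du$, whose variance is $\int_s^t\int_s^t\operatorname{Cov}(X_u-X_s,X_v-X_s)\,du\,dv$; using stationarity of fBm increments and a scaling/positivity argument one gets this is $\sim |t-s|^{2H+2}$, and since $Y_s$ and $X_s$ are $\mathcal{F}_s$-measurable this bounds $\operatorname{Var}(Y_t\mid\mathcal{F}_s)$ from below — provided one also controls the cross-terms with the rest of the past, which again reduces to the local nondeterminism of $X$ established in the first part. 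Either way the statement follows; I would present the moving-average computation as the primary argument since it gives explicit constants.
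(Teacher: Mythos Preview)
Your approach is correct and matches the paper's, which simply cites the Mandelbrot--Van~Ness representation argument (referring to \cite{xiao} and \cite{galeati2020prevalence}) rather than spelling it out as you do. One remark: the ``obstacle'' you flag about identifying $\mathcal{F}^X_s$ with $\mathcal{F}^W_s$ is not actually needed, since for the \emph{lower} bound in \eqref{eq:LND} the trivial inclusion $\mathcal{F}^X_s\subseteq\mathcal{F}^W_s$ (clear from the support of the kernel) already gives $\operatorname{Var}(\,\cdot\mid\mathcal{F}^X_s)\geqslant\operatorname{Var}(\,\cdot\mid\mathcal{F}^W_s)$.
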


\begin{proof}
  The first claim is classical and can be found in the review~{\cite{xiao}}
  and the references therein; alternative, a self-contained proof, based on
  the Mandelbrot--Van Ness representation of fBm, is given in Section~2.4
  from~{\cite{galeati2020prevalence}}; the same representation can be used to
  establish the second half of the claim involving the process $Y$, see
  Example~\text{iv.} from Section~4.2 in~{\cite{galeati2020prevalence}}.
\end{proof}

Among the reasons for using $\mu^H$, instead of just any Gaussian measure
satisfying a suitable LND condition, let us finally mention that this process
can be simulated numerically in a very efficient way.

\section{Inviscid mixing}\label{sec:inviscid-mixing}

This section contains the proof of Theorem~\ref{thm:main-thm-inviscid}, which
we split in several steps.

Recall the setting: in order to study the transport equation $\partial_t f + u
\partial_x f = 0$, we pass to Fourier modes $f^k_t (y) = (P_k f_t) (y)$,
solving $\partial_t f^k + i k u f^k = 0$; namely $f^k_t (y) = e^{- i k t u
(y)} f^k_0 (y)$.

It is then natural to take a slightly more general perspective and study maps
of the form $y \mapsto e^{i \xi u (y)} g (y)$ with $\xi \in \mathbb{R}$, $g
\in H^s (\mathbb{T})$.

\subsection{Lower bounds in terms of
regularity}\label{sec:lower-bound-inviscid}

We show here that the regularity of $u$, measured in the Besov--Nikolskii
scale $B^{\alpha}_{1, \infty}$, necessarily implies a lower bound on the decay
of solutions in the $H^{- 1 / 2}$-norm. The proof is partly inspired by that
of Proposition~3.2 from~{\cite{colombozelati}}.

\begin{lemma}
  \label{lem:inviscid-lower-bound}Let $u \in B^{\alpha}_{1, \infty}
  (\mathbb{T})$ for some $\alpha \in (0, 1)$. Then for any $g \in H^1
  (\mathbb{T})$ there exists a constant $C = C (\alpha, g)$ such that
  \begin{equation}
    \| e^{i \xi u} g \|_{H^{- 1 / 2}} \geqslant C (1 + \| u \|_{B^{\alpha}_{1,
    \infty}})^{- \frac{1}{2 \alpha}}  | \xi |^{- \frac{1}{2 \alpha}} \quad
    \forall \, | \xi | \geqslant 1. \label{eq:inviscid-lower-bound}
  \end{equation}
\end{lemma}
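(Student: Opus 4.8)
The plan is to estimate $\|e^{i\xi u}g\|_{H^{-1/2}}$ from below by testing against a suitable smooth function and exploiting the fact that $e^{i\xi u}$ has, by the Besov regularity of $u$, a controlled "spread" in frequency. The natural dual pairing is
\[
\|e^{i\xi u}g\|_{H^{-1/2}} = \sup_{\|\phi\|_{H^{1/2}}\le 1} \left| \int_{\mathbb{T}} e^{i\xi u(y)} g(y)\, \overline{\phi(y)}\, \mathd y\right|,
\]
so it suffices to exhibit one competitor $\phi$ with $\|\phi\|_{H^{1/2}}\lesssim 1$ realizing the claimed lower bound. The first step is to identify a length scale $\ell = \ell(\xi)$ on which $e^{i\xi u}$ does not oscillate too wildly: since $u\in B^\alpha_{1,\infty}$, the $L^1$-modulus of continuity of $u$ is $\lesssim (1+\|u\|_{B^\alpha_{1,\infty}})\,|h|^\alpha$, hence on a scale $\ell$ with $|\xi|\,\ell^\alpha \sim 1$ — that is $\ell \sim (1+\|u\|_{B^\alpha_{1,\infty}})^{-1/\alpha}|\xi|^{-1/\alpha}$ — the phase $\xi u$ varies by $O(1)$ in an averaged sense on most intervals of length $\ell$.

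The second step is to choose $\phi$ adapted to this scale. A convenient choice is $\phi(y) = c\, e^{i\xi \tilde u(y)}\, \psi_{y_0}(y)$, where $\tilde u$ is a mollification or a frozen (piecewise-constant or linear) version of $u$ at scale $\ell$ near a well-chosen base point $y_0$, and $\psi_{y_0}$ is a bump supported in an interval of length $\ell$ around $y_0$ with $\|\psi_{y_0}\|_\infty\sim 1$. Then $\|\phi\|_{H^{1/2}}\lesssim \ell^{-1/2}$ by the standard estimate for a bump at scale $\ell$ (using $\|\psi_{y_0}\|_{L^2}\sim \ell^{1/2}$ and $\|\phi\|_{\dot H^1}\lesssim (|\xi|\,\mathrm{Lip}(\tilde u) + \ell^{-1})\ell^{1/2}$, with $|\xi|\,\mathrm{Lip}(\tilde u)\,\ell \sim |\xi|\ell^\alpha\cdot\ell^{\alpha-1}\cdot\ell^{1-\alpha}\dots$ — here one must check the frozen phase does not cost more than $\ell^{-1}$, which forces the precise relation $|\xi|\ell^\alpha\sim 1$). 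On the other hand the pairing $\int e^{i\xi u}g\,\overline\phi = c\int e^{i\xi(u-\tilde u)} g\,\psi_{y_0}$, and since $|u-\tilde u|\lesssim 1$ in an $L^1$-averaged sense on the support, one can (by choosing $y_0$ so that $g$ is bounded below in $L^1$-average on the chosen interval, possible because $g\in H^1\hookrightarrow C^0$ and $g\not\equiv 0$) bound this from below by $\gtrsim \ell$ after normalizing $c\sim \ell^{-1}$ so that $\|\psi_{y_0}\|$ conventions match. Combining, $\|e^{i\xi u}g\|_{H^{-1/2}} \gtrsim \ell / \ell^{-1/2}\cdot(\text{normalization}) \sim \ell^{1/2} \sim (1+\|u\|_{B^\alpha_{1,\infty}})^{-1/(2\alpha)}|\xi|^{-1/(2\alpha)}$, which is exactly \eqref{eq:inviscid-lower-bound}.

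\textbf{Main obstacle.} The delicate point is making the "frozen phase" argument rigorous: replacing $u$ by $\tilde u$ on the interval introduces an error $e^{i\xi(u-\tilde u)}$ which is only controlled in an averaged ($L^1$) sense, not pointwise, so one cannot simply say the integrand is $\approx g\psi$. One must instead argue that there exists a good base point $y_0$ and a good choice of the constant phase offset such that $\mathrm{Re}\int e^{i\xi(u-\tilde u)}g\psi_{y_0}$ is bounded below by a fixed fraction of $\int |g|\psi_{y_0}$ — this is where following the structure of Proposition~3.2 in~\cite{colombozelati} helps, via an averaging/pigeonhole over translates: integrating the squared pairing over all base points $y_0\in\mathbb{T}$ and using the $B^\alpha_{1,\infty}$ bound on the second moment of $u(y)-u(y')$ over $|y-y'|\le \ell$ shows the average is of the right size, hence some $y_0$ works. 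The book-keeping of constants (ensuring the $H^{1/2}$-norm of $\phi$ genuinely stays $\lesssim \ell^{-1/2}$ and does not pick up extra $\xi$-dependence from the oscillatory factor) is the other technical nuisance, resolved precisely by the calibration $|\xi|\ell^\alpha\sim 1$.
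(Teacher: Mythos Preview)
Your approach is genuinely different from the paper's and, while plausible, leaves the hardest step as an acknowledged obstacle rather than a completed argument.

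The paper avoids test functions and localization entirely. It sets $\bar g := e^{i\xi u}g$ and exploits two facts: first, $\|\bar g\|_{L^2}=\|g\|_{L^2}$ is conserved; second, $\bar g\in B^{\alpha/2}_{2,\infty}$ with
\[
\|\bar g\|_{B^{\alpha/2}_{2,\infty}}\lesssim \|g\|_{H^1}\,(1+\|u\|_{B^\alpha_{1,\infty}})^{1/2}\,|\xi|^{1/2},
\]
which follows from the elementary inequality $|e^{ia}-e^{ib}|\le\sqrt{2}\,|a-b|^{1/2}$ applied to the finite-difference characterization (this is precisely what turns the $L^1$-modulus of $u$ into an $L^2$-modulus of $e^{i\xi u}$, at the cost of halving the regularity exponent). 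Then the interpolation inequality
\[
\|\bar g\|_{L^2}\lesssim \|\bar g\|_{H^{-1/2}}^{\alpha/(1+\alpha)}\,\|\bar g\|_{B^{\alpha/2}_{2,\infty}}^{1/(1+\alpha)}
\]
is rearranged to give the lower bound. No base point, no pigeonhole, no frozen phase.

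Your route can be pushed through, but two points need tightening. First, if $\tilde u$ is genuinely linear (nonconstant) then $e^{i\xi\tilde u}$ shifts $\phi$ in frequency by $\sim |\xi|\,\mathrm{slope}(\tilde u)$, and controlling $\|\phi\|_{H^{1/2}}$ then requires this shift to be $\lesssim \ell^{-1}$; your parenthetical computation here trails off and does not close. Taking $\tilde u$ \emph{constant} (equal to $u(y_0)$ at a Lebesgue point) removes this issue: then $\phi=e^{i\xi u(y_0)}\psi_{y_0}$ has $\|\phi\|_{H^{1/2}}\sim 1$ for a unit-height bump at scale $\ell$. Second, the averaging/pigeonhole you propose does work once stated correctly: from $\int_{\mathbb T}\int_0^\ell |u(y_0+h)-u(y_0)|\,\mathd h\,\mathd y_0\lesssim \ell^{1+\alpha}\|u\|_{B^\alpha_{1,\infty}}$ one finds $y_0$ (inside the set where $|g|\ge c_g>0$, using $g\in H^1\hookrightarrow C^0$) with $\int_{I_{y_0}}|u-u(y_0)|\lesssim \ell^{1+\alpha}$; then $|e^{i\theta}-1|\le|\theta|$ gives $|\langle e^{i\xi u}g,\phi\rangle - \int g\psi_{y_0}|\lesssim \|g\|_\infty|\xi|\ell^{1+\alpha}$, and the choice $|\xi|\ell^\alpha\sim c$ small closes the estimate.

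So your strategy is salvageable and yields a constructive witness, but the paper's interpolation argument is shorter, sidesteps the $L^1$-versus-pointwise obstacle entirely, and generalizes immediately to all $H^{-s}$ scales (Remark~\ref{rem:inviscid-lower-bound}).
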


\begin{proof}
  Fix $\xi$ with $| \xi | \geqslant 1$ and set $\bar{g} \assign e^{i \xi u}
  g$; we claim that $\bar{g} \in B^{\alpha / 2}_{2, \infty}$. By Sobolev and
  Besov embeddings, $g \in L^{\infty} \cap B^{\alpha / 2}_{2, \infty}$; $e^{i
  \xi u} \in L^{\infty}$, so it's enough to show that $e^{i \xi u} \in
  B^{\alpha / 2}_{2, \infty}$. By the basic estimate $| e^{i a} - e^{i b} |
  \leqslant \sqrt{2}  | a - b |^{1 / 2}$, it holds
  
  \begin{align*}
    \left\| e^{i \xi u \left( \cdot \, + y \right)} - e^{i \xi u (\cdot +
    \tilde{y})} \right\|_{L^2} & \lesssim | \xi |^{1 / 2} \left\| u \left(
    \cdot \, + y \right) - u \left( \cdot \, + \tilde{y} \right)
    \right\|_{L^1}^{1 / 2}\\
    & \lesssim | \xi |^{1 / 2} \| u \|^{1 / 2}_{B^{\alpha}_{1, \infty}}
    d_{\mathbb{T}} (y, \tilde{y})^{\alpha / 2} .
  \end{align*}
  By the equivalent characterization of Besov--Nikolskii spaces, this implies
  \[ \| e^{i \xi u} \|_{B^{\alpha / 2}_{2, \infty}} \lesssim 1 + | \xi |^{1 /
     2} \| u \|^{1 / 2}_{B^{\alpha}_{1, \infty}} \lesssim (1 + \| u
     \|_{B^{\alpha}_{1, \infty}})^{1 / 2} | \xi |^{1 / 2} \]
  and so by Proposition~\ref{prop:besov-algebra} in Appendix~\ref{app:besov}
  we conclude that $\bar{g} \in B^{\alpha / 2}_{2, \infty}$ with
  \begin{equation}
    \| \bar{g} \|_{B^{\alpha / 2}_{2, \infty}} \lesssim \| g \|_{H^1} (1 + \|
    u \|_{B^{\alpha}_{1, \infty}})^{1 / 2} | \xi |^{1 / 2} .
    \label{eq:inviscid-lower-proof1}
  \end{equation}
  Clearly $\| \bar{g} \|_{L^2} = \| g \|_{L^2}$. Using the interpolation
  inequality from Corollary~\ref{cor:interpolation-besov} in
  Appendix~\ref{app:besov} (for the choice $s_1 = 1 / 2$, $s_2 = \alpha / 2$)
  we obtain
  \begin{equation}
    \| g \|_{L^2} = \| \bar{g} \|_{L^2} \lesssim \| \bar{g} \|_{H^{- 1 /
    2}}^{\frac{\alpha}{1 + \alpha}}  \| \bar{g} \|_{B^{\alpha / 2}_{2,
    \infty}}^{\frac{1}{1 + \alpha}} . \label{eq:inviscid-lower-proof2}
  \end{equation}
  Rearranging now the terms in~\eqref{eq:inviscid-lower-proof2} and applying
  the estimate~\eqref{eq:inviscid-lower-proof1} we find
  \begin{equation} \label{eq:inviscid-lower-proof3}
      \| \bar{g} \|_{H^{- 1 / 2}}
       \gtrsim \| \bar{g} \|_{B^{\alpha /
      2}_{2, \infty}}^{- \frac{1}{\alpha}}  \| g \|_{L^2}^{1 +
      \frac{1}{\alpha}}
      \gtrsim \| g \|_{L^2}^{1 + \frac{1}{\alpha}} \| g \|_{H^1}^{-
      \frac{1}{\alpha}} (1 + \| u \|_{B^{\alpha}_{1, \infty}})^{- \frac{1}{2
      \alpha}} | \xi |^{- \frac{1}{2 \alpha}}
  \end{equation}
  where the hidden constant in~\eqref{eq:inviscid-lower-proof3} only depends
  on $\alpha$. Using the definition of $\bar{g}$ and relabelling the constant
  to include the $g$-dependent terms yields the conclusion.
\end{proof}

\begin{corollary}
  \label{cor:inviscid-lower-bound}Let $u \in B^{\alpha}_{1, \infty}
  (\mathbb{T})$ be mixing on $H^{1 / 2} (\mathbb{T})$ with rate
  $r_{1 / 2 \text{-mix}}$, in the sense of
  Definition~\ref{def:mixing}. Then there exists a constant $C = C (\alpha,
  u)$ such that
  \[ r_{1 / 2 \text{-mix}} (t) \leqslant C t^{\frac{1}{2
     \alpha}} . \]
\end{corollary}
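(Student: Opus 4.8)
The plan is to derive Corollary~\ref{cor:inviscid-lower-bound} as an essentially immediate consequence of Lemma~\ref{lem:inviscid-lower-bound}, by testing the mixing estimate~\eqref{eq:def-mixing} against a well-chosen initial datum and comparing with the lower bound~\eqref{eq:inviscid-lower-bound}. First I would fix once and for all a nonzero function $g \in H^1(\mathbb{T})$ with $\int_{\mathbb{T}} g = 0$ (for instance $g(y) = e^{iy}$, or any smooth mean-zero bump), so that $g \in H^{1/2}(\mathbb{T})$ as well and Lemma~\ref{lem:inviscid-lower-bound} applies with some constant $C = C(\alpha, g) > 0$. The point is that $g$ is a fixed datum, so this $C$ becomes an absolute constant once $\alpha$ is fixed.

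Next I would spell out the matching of scales between the Fourier-level formulation of mixing and the object $e^{i\xi u}g$ appearing in the lemma. By Definition~\ref{def:mixing}, mixing on $H^{1/2}$ with rate $r_{1/2\text{-mix}}$ means
\[
\| e^{-itku} g \|_{H^{-1/2}} \leqslant \frac{C_0}{r_{1/2\text{-mix}}(t|k|)} \, \| g \|_{H^{1/2}} \qquad \forall\, k \in \mathbb{Z}_0,\ t \geqslant 1,
\]
for some constant $C_0 > 0$. Taking $k = 1$ and writing $\xi = -t$ (or $\xi = t$, the sign being irrelevant since only $|\xi|$ enters), the left-hand side is exactly $\| e^{i\xi u} g \|_{H^{-1/2}}$ with $|\xi| = t \geqslant 1$. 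On the other hand, Lemma~\ref{lem:inviscid-lower-bound} gives
\[
\| e^{i\xi u} g \|_{H^{-1/2}} \geqslant C \, (1 + \|u\|_{B^\alpha_{1,\infty}})^{-\frac{1}{2\alpha}} \, |\xi|^{-\frac{1}{2\alpha}} .
\]
Combining the two displays, for every $t \geqslant 1$ we get
\[
\frac{C_0}{r_{1/2\text{-mix}}(t)} \, \| g \|_{H^{1/2}} \geqslant C \, (1 + \|u\|_{B^\alpha_{1,\infty}})^{-\frac{1}{2\alpha}} \, t^{-\frac{1}{2\alpha}},
\]
which rearranges to $r_{1/2\text{-mix}}(t) \leqslant C' t^{\frac{1}{2\alpha}}$ with
\[
C' = \frac{C_0 \, \| g \|_{H^{1/2}}}{C} \, (1 + \|u\|_{B^\alpha_{1,\infty}})^{\frac{1}{2\alpha}},
\]
a constant depending only on $\alpha$ and $u$ (through its Besov norm), as claimed.

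Finally I would note that the restriction $t \geqslant 1$ in Definition~\ref{def:mixing} is exactly what is needed so that $|\xi| \geqslant 1$ and Lemma~\ref{lem:inviscid-lower-bound} is applicable, so there is no gap there; and the constant $C'$ can be enlarged (absorbing the $t \in [0,1)$ range trivially, or simply ignoring it since the rate is only constrained for $t \geqslant 1$ anyway). Honestly, there is no real obstacle here: all the work sits in Lemma~\ref{lem:inviscid-lower-bound}, and the corollary is just an unwinding of definitions together with the arbitrariness of $k$ (specialized to $k = 1$) and the identification $\xi = \pm t$. The only mild point of care is to make sure that the fixed test function $g$ simultaneously lies in $H^1$ (so the lemma applies) and in $H^{1/2}$ (so the mixing bound applies with a finite right-hand side), which is automatic for any smooth mean-zero $g$.
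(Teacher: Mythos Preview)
Your proposal is correct and follows essentially the same argument as the paper: choose the test function $g(y)=e^{iy}$, specialise Definition~\ref{def:mixing} to $k=1$, and compare the resulting upper bound on $\|e^{-itu}g\|_{H^{-1/2}}$ with the lower bound from Lemma~\ref{lem:inviscid-lower-bound} applied with $\xi=-t$. The only inessential difference is that you impose $\int_{\mathbb{T}} g = 0$, which is harmless but not actually needed for either the lemma or the operator-norm formulation of mixing.
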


\begin{proof}
  Consider $g (y) = e^{i y}$, so that $\| g \|_{H^{1 / 2}} \sim
  \| g \|_{H^1} \sim 1$; then by Definition~\ref{def:mixing} applied for the
  choice $k = 1$ and Lemma~\ref{lem:inviscid-lower-bound} for $\xi = - t$, it
  holds
  \begin{align*}
    \frac{1}{r (t)} \gtrsim & \| e^{- i t u} \|_{H^{1 / 2} \rightarrow H^{- 1
    / 2}} \geqslant \| e^{- i t u} g \|_{H^{- 1 / 2}} \gtrsim_{\alpha} (1 + \|
    u \|_{B^{\alpha}_{1, \infty}})^{- \frac{1}{2 \alpha}} t^{- \frac{1}{2
    \alpha}} ;
  \end{align*}
  up to relabelling constants, this yields the conclusion.
\end{proof}

\begin{remark}
  \label{rem:inviscid-lower-bound}In fact, the statement of
  Lemma~\ref{lem:inviscid-lower-bound} can be generalized as follows. For
  $\alpha \in (0, 1)$, $u \in B^{\alpha}_{1, \infty} (\mathbb{T})$, $g \in H^1
  (\mathbb{T})$ and any $s > 0$ there exists a constant $C (\alpha, g, s)$
  such that
  \[ \| e^{i \xi u} g \|_{H^{- s}} \geqslant C (1 + \| u \|_{B^{\alpha}_{1,
     \infty}})^{- \frac{s}{\alpha}}  | \xi |^{- \frac{s}{\alpha}} \quad
     \forall \, | \xi | \geqslant 1. \]
  Then arguing as in Corollary~\ref{cor:inviscid-lower-bound} by
  choosing $g (y) = e^{i y}$, one can conclude that the best possible rate
  for inviscid mixing on the scale $H^s (\mathbb{T})$ is $r_{s
  \text{-mix}} (t) \sim t^{s / \alpha}$. Taking $s = 1$ provides the rate
  $t^{1 / \alpha}$, which is in line with Proposition~3.2
  from~{\cite{colombozelati}}.
\end{remark}

\subsection{Upper bounds in terms of
$\rho$-irregularity}\label{sec:upper-bound-inviscid}

The concept of $\rho$-irregularity was first introduced
in~{\cite{catellier2016averaging}} in the study of regularization by noise
phenomena. Its applications to PDEs have been subsequently explored
in~{\cite{choukgubinelli1,choukgubinelli2,galeati2020prevalence,catellier2016rough}}.

\begin{definition}
  \label{def:rho-irr}Let $\gamma \in [0, 1)$, $\rho > 0$; a measurable map $u
  : [0, \pi] \rightarrow \mathbb{R}$ is said to be {\tmdfn{$(\gamma,
  \rho)$-irregular}} if there exists a constant $C > 0$ such that
  \begin{equation}
    \left| \int_I e^{i \xi u (z)} \mathd z \right| \leqslant C | I |^{\gamma} 
    | \xi |^{- \rho} \quad \forall \, \xi \in \mathbb{R}, \, I \subset [0,
    \pi] \label{eq:def-rho-irr}
  \end{equation}
  where $I$ stands for a subinterval of $[0, \pi]$ and $| I |$ denotes its
  length. A similar definition holds for $u : \mathbb{R} \rightarrow
  \mathbb{R}$; a map $u : \mathbb{T} \rightarrow \mathbb{R}$ is said to be
  $(\gamma, \rho)$-irregular if its $2 \pi$-periodic extension
  $u : \mathbb{R} \rightarrow \mathbb{R}$ has this property. We say that $u$
  is {\tmdfn{$\rho$-irregular}} for short if there exists $\gamma > 1 / 2$
  such that it is $(\gamma, \rho)$-irregular.
\end{definition}

In all of the cases covered by Definition~\ref{def:rho-irr},
following the original definition from~{\cite{catellier2016averaging}}, we
denote the optimal constant $C$ in~\eqref{eq:def-rho-irr} by $\| \Phi^u
\|_{\gamma, \rho}$. This is due to the notation $\Phi^u_t (\xi) : = \int_0^t
e^{i \xi u (z)} \mathd z$ and the fact that, for $u : [0, \pi] \rightarrow
\mathbb{R}$, by~\eqref{eq:def-rho-irr} it holds
\[ \| \Phi^u \|_{\gamma, \rho} = \sup_{\xi \in \mathbb{R}, 0 \leqslant s < t
   \leqslant \pi} \frac{| \Phi^u_t (\xi) - \Phi^u_s (\xi) |}{| t - s
   |^{\gamma} | \xi |^{- \rho}} . \]

The property of $\rho$-irregularity may be rephrased in the following form,
more suited for our purposes.

\begin{lemma}
  \label{lem:alternative-rho}Let $u : \mathbb{T} \rightarrow \mathbb{R}$ be
  $(\gamma, \rho)$-irregular, then
  \[ \| e^{i \xi u} \|_{B^{\gamma - 1}_{\infty, \infty}} \lesssim \| \Phi^u
     \|_{\gamma, \rho}  | \xi |^{- \rho} \quad \forall \, \xi \in \mathbb{R}.
  \]
\end{lemma}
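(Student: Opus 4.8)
The key observation is that $\Phi^u_\cdot(\xi)$ is, by its very definition, a primitive of $z\mapsto e^{i\xi u(z)}$, and that $(\gamma,\rho)$-irregularity says exactly that, for each fixed $\xi$, the map $t\mapsto\Phi^u_t(\xi)$ is $\gamma$-H\"{o}lder on intervals of (torus) length with seminorm controlled by $\|\Phi^u\|_{\gamma,\rho}\,|\xi|^{-\rho}$. Since differentiation maps $B^\gamma_{\infty,\infty}$ boundedly into $B^{\gamma-1}_{\infty,\infty}$, the claimed bound should follow at once; the only genuine subtlety is that $\Phi^u_\cdot(\xi)$ is not periodic, so one must first subtract its linear drift. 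The plan is therefore: (1) build a periodic primitive by removing the drift; (2) estimate its $C^\gamma(\mathbb{T})$ norm via~\eqref{eq:def-rho-irr}; (3) differentiate, using standard Besov facts; (4) add back the constant drift.

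Concretely, I would first set $m:=\frac{1}{2\pi}\int_{\mathbb{T}}e^{i\xi u(z)}\,\mathd z=\frac{1}{2\pi}\Phi^u_{2\pi}(\xi)$, the mean of $e^{i\xi u}$ on $\mathbb{T}$; applying~\eqref{eq:def-rho-irr} to the single interval $[0,2\pi]$ gives $|m|\lesssim_\gamma\|\Phi^u\|_{\gamma,\rho}\,|\xi|^{-\rho}$. Then I would introduce $W(y):=\Phi^u_y(\xi)-m\,y$, which satisfies $W(0)=W(2\pi)=0$, hence extends to a $2\pi$-periodic function, and has $W'=e^{i\xi u}-m$. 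For points $s<t$ with $|t-s|\le 2\pi$, using $(\gamma,\rho)$-irregularity of the periodic extension of $u$ on $[s,t]$, the bound on $|m|$, and $|t-s|\le(2\pi)^{1-\gamma}|t-s|^\gamma$, one obtains $|W(t)-W(s)|\le|\Phi^u_t(\xi)-\Phi^u_s(\xi)|+|m|\,|t-s|\lesssim_\gamma\|\Phi^u\|_{\gamma,\rho}\,|\xi|^{-\rho}\,|t-s|^\gamma$; combined with $W(0)=0$ this yields $\|W\|_{C^\gamma(\mathbb{T})}\lesssim_\gamma\|\Phi^u\|_{\gamma,\rho}\,|\xi|^{-\rho}$ for $\gamma\in(0,1)$ (and just the $L^\infty(\mathbb{T})$ bound when $\gamma=0$).

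Next I would invoke standard properties of Besov spaces on the torus (see Appendix~\ref{app:besov}): for $\gamma\in(0,1)$ one has $C^\gamma(\mathbb{T})=B^\gamma_{\infty,\infty}(\mathbb{T})$ with equivalent norms, for $\gamma=0$ the embedding $L^\infty(\mathbb{T})\hookrightarrow B^0_{\infty,\infty}(\mathbb{T})$, and in both cases $\partial_y$ maps $B^\gamma_{\infty,\infty}(\mathbb{T})$ boundedly into $B^{\gamma-1}_{\infty,\infty}(\mathbb{T})$. Applying these to $W$ gives $\|e^{i\xi u}-m\|_{B^{\gamma-1}_{\infty,\infty}}=\|\partial_y W\|_{B^{\gamma-1}_{\infty,\infty}}\lesssim_\gamma\|W\|_{B^\gamma_{\infty,\infty}}\lesssim_\gamma\|\Phi^u\|_{\gamma,\rho}\,|\xi|^{-\rho}$. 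Since $m$ is constant, $\|m\|_{B^{\gamma-1}_{\infty,\infty}(\mathbb{T})}=|m|\,\|1\|_{B^{\gamma-1}_{\infty,\infty}(\mathbb{T})}\lesssim_\gamma|m|\lesssim_\gamma\|\Phi^u\|_{\gamma,\rho}\,|\xi|^{-\rho}$, and the triangle inequality $\|e^{i\xi u}\|_{B^{\gamma-1}_{\infty,\infty}}\le\|e^{i\xi u}-m\|_{B^{\gamma-1}_{\infty,\infty}}+\|m\|_{B^{\gamma-1}_{\infty,\infty}}$ concludes, with a constant depending only on $\gamma$ and the fixed Littlewood--Paley partition.

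The only real obstacle — and it is a mild one — is the non-periodicity bookkeeping: one must check that the mean $m=m(\xi)$ itself decays like $|\xi|^{-\rho}$ (immediate from a single use of irregularity) and that, after removing the drift, the resulting primitive is genuinely $\gamma$-H\"{o}lder on all of $\mathbb{T}$, including across the identification $2\pi\sim 0$. This can be handled either by the elementary inequality $a^\gamma+b^\gamma\le 2^{1-\gamma}(a+b)^\gamma$ for $\gamma\in[0,1]$, or, more cleanly, by running the increment estimate directly on the periodic extension of $W$ as above, where the gluing point requires no special treatment. Everything else is a routine application of classical Besov space theory.
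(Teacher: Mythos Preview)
Your proposal is correct and follows essentially the same approach as the paper: both define a periodic primitive by subtracting the linear drift $m\,y$ (the paper calls it $v^\xi$, you call it $W$), estimate its $C^\gamma(\mathbb{T})$ norm via the $(\gamma,\rho)$-irregularity condition, then differentiate using Proposition~\ref{prop:besov-differentiation} and handle the constant $m$ separately. Your treatment is slightly more explicit about the periodicity bookkeeping and the $\gamma=0$ endpoint, but the argument is the same.
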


\begin{proof}
  For $\bar{y} \in [- \pi, \pi]$ and $\xi \in \mathbb{R}$, define the function
  \begin{align*}
    v^{\xi} (\bar{y}) & = \int_{- \pi}^{\bar{y}} e^{i \xi u (y)} \mathd y -
    \left( \frac{\bar{y} + \pi}{2 \pi} \right) \int_{- \pi}^{\pi} e^{i \xi u
    (y)} \mathd y ;
  \end{align*}
  by periodicity it can be identified with a function on $\mathbb{T}$. Then by
  definition of $(\gamma, \rho)$-irregularity it holds $\| v^{\xi}
  \|_{C^{\gamma}} \lesssim \| \Phi^u \|_{\gamma, \rho}  | \xi |^{- \rho}$ and
  so by Proposition~\ref{prop:besov-differentiation} we deduce that
\begin{equation*}\begin{split}
	\| e^{i \xi u} \|_{B^{\gamma - 1}_{\infty \infty}}
	& = \left\|
       (v^{\xi})' + \frac{1}{2 \pi} \int_{- \pi}^{\pi} e^{i \xi u (y)} \mathd
       y \right\|_{B^{\gamma - 1}_{\infty, \infty}}\\
       & \lesssim \| v^{\xi} \|_{C^{\gamma}} + \frac{1}{2 \pi} \left|
       \int_{- \pi}^{\pi} e^{i \xi u (y)} \mathd y \right|\\
       & \lesssim \| \Phi^u \|_{\gamma, \rho}  | \xi |^{- \rho} .\qedhere
\end{split}\end{equation*}
\end{proof}

The relation between $\rho$-irregularity and inviscid mixing comes from the
next result.

\begin{lemma}
  \label{lem:estim-mixing-rho}Let $u : \mathbb{T} \rightarrow \mathbb{R}$ be
  $(\gamma, \rho)$-irregular for some $\gamma > 1 / 2$. Then there exists a
  constant $C = C (\gamma)$ such that
  \begin{equation}
    \| e^{i \xi u} g \|_{H^{- 1 / 2}} \leqslant C \| \Phi^u \|_{\gamma, \rho} 
    | \xi |^{- \rho} \| g \|_{H^{1 / 2}}  \quad \forall \, \xi \neq 0, \, g
    \in H^{1 / 2} . \label{eq:estim-mixing-rho}
  \end{equation}
  As a consequence, $u$ is mixing on the scale $H^{1 / 2}$ with rate
  $r_{1 / 2 \text{-mix}} (t) = t^{\rho}$, in the sense of
  Definition~\ref{def:mixing}.
\end{lemma}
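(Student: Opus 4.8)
The plan is to prove the bilinear estimate \eqref{eq:estim-mixing-rho} by a duality/paraproduct argument, since $H^{-1/2}$ is the dual of $H^{1/2}$ and the multiplier $e^{i\xi u}$ is exactly the object whose negative Besov regularity we control via Lemma~\ref{lem:alternative-rho}. Concretely, for $g, h \in H^{1/2}$ I would estimate the pairing $\langle e^{i\xi u} g, h\rangle$ and show it is bounded by $C\,\|\Phi^u\|_{\gamma,\rho}\,|\xi|^{-\rho}\,\|g\|_{H^{1/2}}\|h\|_{H^{1/2}}$; taking the supremum over $\|h\|_{H^{1/2}}\le 1$ then gives \eqref{eq:estim-mixing-rho}. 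The key structural input is that, by Lemma~\ref{lem:alternative-rho}, $e^{i\xi u} \in B^{\gamma-1}_{\infty,\infty}$ with norm $\lesssim \|\Phi^u\|_{\gamma,\rho}|\xi|^{-\rho}$, and $\gamma-1 > -1/2$ since $\gamma > 1/2$; so $e^{i\xi u}$ has strictly better than $-1/2$ regularity, which is precisely the room needed to pair a $-1/2+\varepsilon$ distribution against a product of two $H^{1/2}$ functions.

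The mechanism I would use is the standard product rule in Besov/Sobolev scales: a pointwise product $a\cdot b$ with $a \in B^{\sigma}_{\infty,\infty}$, $\sigma < 0$, and $b \in H^{\tau}$ lands in $H^{\sigma+\tau}$ provided $\sigma + \tau > 0$ controls the "high-high into low" resonant term; here one takes $a = e^{i\xi u}$ with $\sigma = \gamma-1$ and the relevant Sobolev exponents arranged so that the resonant term closes. More precisely, decompose the triple product $e^{i\xi u}\,g\,\bar h$ à la Bony into paraproducts and a resonant term; the two paraproduct terms are harmless because $g,h \in H^{1/2} \hookrightarrow L^\infty$-type estimates combined with the positive-regularity factor absorb the negative-regularity factor, while the resonant term is controlled exactly because $(\gamma - 1) + \tfrac12 + \tfrac12 = \gamma > 0$. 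If the paper's Appendix~\ref{app:besov} provides a ready-made product/multiplication lemma in the $B^{\alpha}_{p,q}$ scale (it cites Proposition~\ref{prop:besov-algebra} earlier), I would invoke that directly with $a \in B^{\gamma-1}_{\infty,\infty}$ and $g \in B^{1/2}_{2,2}=H^{1/2}$ to conclude $e^{i\xi u} g \in B^{\gamma-1}_{2,\infty}$, hence in $H^{-1/2}$ since $\gamma - 1 > -1/2$ and $B^{\gamma-1}_{2,\infty}\hookrightarrow H^{\gamma-1}\hookrightarrow H^{-1/2}$, with the correct scaling in $|\xi|^{-\rho}$ carried through from the norm bound on $e^{i\xi u}$.

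The main obstacle, and the step I would be most careful about, is the resonant/high-high term in the product estimate: one must verify that pairing a distribution of regularity $\gamma-1 \in (-1/2, 0)$ against $H^{1/2}\times H^{1/2}$ is admissible, which uses $\gamma > 1/2$ in an essential way — this is exactly why the hypothesis $\gamma > 1/2$ (and not merely $\gamma > 0$) appears in the statement. Everything else is bookkeeping: the constant $C$ depends only on $\gamma$ through the implicit constants in the Littlewood--Paley/product estimates, and the dependence on $u$ and $\xi$ enters solely through the factor $\|\Phi^u\|_{\gamma,\rho}|\xi|^{-\rho}$. Finally, the "as a consequence" clause is immediate: specialising \eqref{eq:estim-mixing-rho} to the Fourier mode setting $\xi = -tk$ (so $|\xi| = t|k|$) and recalling that $f^k_t = e^{-iktu}f^k_0$, the bound reads $\|e^{-iktu}\|_{H^{1/2}\to H^{-1/2}} \le C\|\Phi^u\|_{\gamma,\rho}(t|k|)^{-\rho}$, which is exactly \eqref{eq:def-mixing} with $r_{1/2\text{-mix}}(t) = t^\rho$ (the constant absorbed into $C$), so $u$ is mixing on $H^{1/2}$ with rate $t^\rho$ in the sense of Definition~\ref{def:mixing}.
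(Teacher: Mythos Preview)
Your proposal is essentially correct and follows the same approach as the paper: invoke Lemma~\ref{lem:alternative-rho} to place $e^{i\xi u}$ in $B^{\gamma-1}_{\infty,\infty}$, apply the Besov product estimate (Proposition~\ref{prop:besov-paraproducts} with $s_1=\gamma-1$, $s_2=1/2$, $p_1=\infty$, $p_2=2$) to get $e^{i\xi u}g\in B^{\gamma-1}_{2,\infty}$, then embed into $H^{-1/2}$; the mixing consequence follows by specialising $\xi=-tk$.

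Two small slips to fix. First, the chain $B^{\gamma-1}_{2,\infty}\hookrightarrow H^{\gamma-1}\hookrightarrow H^{-1/2}$ is wrong at the first step: $B^{s}_{2,\infty}$ does \emph{not} embed into $B^{s}_{2,2}=H^s$ at equal regularity. The correct route is the direct embedding $B^{\gamma-1}_{2,\infty}\hookrightarrow B^{-1/2}_{2,2}=H^{-1/2}$, which holds precisely because $\gamma-1>-1/2$ (one trades the $\varepsilon$ of extra regularity for the improvement in the third index). Second, in your Bony-decomposition sketch you write ``$g,h\in H^{1/2}\hookrightarrow L^\infty$-type estimates'': in dimension one $H^{1/2}$ is the critical case and does \emph{not} embed into $L^\infty$, so that justification for the paraproduct terms is not valid as stated. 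This is harmless for the actual proof, since the product lemma you ultimately invoke does not require any $L^\infty$ control, but the aside should be dropped or rephrased.
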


\begin{proof}
  The proof of the estimate~\eqref{eq:estim-mixing-rho} relies on several
  properties of Besov spaces, for which we refer the reader to
  Appendix~\ref{app:besov}. By assumption $\gamma + 1 / 2 > 1$,
  thus we can apply Proposition~\ref{prop:besov-paraproducts}
  (for the choice $s_1 = \gamma - 1$, $s_2 = 1 / 2$, $p_1 = q = \infty$, $p_2
  = p = 2$) and Lemma~\ref{lem:alternative-rho} to obtain
  \begin{align*}
    \| e^{i \xi u} g \|_{B^{\gamma - 1}_{2, \infty}} & \lesssim \| e^{i \xi u}
    \|_{B^{\gamma - 1}_{\infty, \infty}} \| g \|_{B^{1 / 2}_{2, \infty}}\\
    & \lesssim \| \Phi^u \|_{\gamma, \rho}  | \xi |^{- \rho} \| g \|_{B^{1 /
    2}_{2, 2}}\\
    & = \| \Phi^u \|_{\gamma, \rho}  | \xi |^{- \rho} \| g \|_{H^{1 / 2}} .
  \end{align*}
  Again by the hypothesis $\gamma - 1 > - 1 / 2$ and so by Besov embeddings
  $B^{\gamma - 1}_{2, \infty} \hookrightarrow H^{- 1 / 2}$, yielding the first
  claim. Applying estimate~\eqref{eq:estim-mixing-rho} for $k \in
  \mathbb{Z}_0$, $\xi = - t k$ gives
  \[ \| e^{- i t k u} \|_{H^{1 / 2} \rightarrow H^{- 1 / 2}} \leqslant \frac{C
     \| \Phi^u \|_{\gamma, \rho} }{(t | k |)^{\rho}} \]
  and thus the conclusion.
\end{proof}

The property of $\rho$-irregularity implies roughness of $u$, as the name
suggests. To quantify this precisely, we recall the concept of H\"{o}lder
roughness, as presented in~{\cite{frizhairer}}.

\begin{definition}
  \label{def:holder-roughness}A measurable map $u : \mathbb{T} \rightarrow
  \mathbb{R}$ is said to be $\alpha$-H\"{o}lder {\tmdfn{rough}} if there
  exists $L = L_{\alpha} (u)$ such that: for any $\delta > 0$ and any $\bar{y}
  \in \mathbb{T}$, there exists $z \in \mathbb{T}$ satisfying
  \[ d_{\mathbb{T}} (\bar{y}, z) \leqslant \delta \quad \text{ and } \quad | u
     (\bar{y}) - u (z) | \geqslant L_{\alpha} (u) \delta^{\alpha} . \]
  The optimal constant $L_{\alpha} (u)$ is called the modulus of
  $\alpha$-H\"{o}lder roughness of $u$.
\end{definition}

Definition~\ref{def:holder-roughness} is equivalent to requiring
\begin{equation}
  L_{\alpha} (u) = \inf_{\bar{y} \in \mathbb{T}, \delta > 0} \sup_{z \in
  B_{\delta} (\bar{y})} \frac{| u (z) - u (\bar{y}) |}{\delta^{\alpha}} > 0.
\end{equation}
A detailed study of analytic properties of $\rho$-irregular paths was carried
out in Section~5 of~{\cite{galeati2020prevalence}}; in
particular, there exists a critical prameter $\alpha^{\ast}$, associated to
the pair $(\gamma, \rho)$, linked to the (ir)regularity of $u$ in H\"{o}lder
and Besov--Nikolskii scales.

\begin{proposition}
  \label{prop:rho-irr-besov-nikolskii}Let $u : \mathbb{T} \rightarrow
  \mathbb{R}$ be $(\gamma, \rho)$-irregular and define $\alpha^{\ast} \assign
  (1 - \gamma) / \rho$. Then:
  \begin{enumeratealpha}
    \item $u$ is $\alpha$-H\"{o}lder rough for any $\alpha > \alpha^{\ast}$
    with $L_{\alpha} (u) = + \infty$.
    
    \item $u$ has infinite $p$-variation on any subinterval $I \subset
    \mathbb{T}$ and for any $p > 1 / \alpha^{\ast}$.
    
    \item $u$ does not belong to $B^{\alpha}_{1, \infty}$ for
    any $\alpha > \alpha^{\ast}$.
  \end{enumeratealpha}
\end{proposition}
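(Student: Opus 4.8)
**

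The plan is to prove each of the three claims of Proposition~\ref{prop:rho-irr-besov-nikolskii} by establishing, in sequence, that $(\gamma,\rho)$-irregularity forces a strong \emph{lower} bound on the oscillation of $u$ at small scales. The unifying device is the inequality
\[
  |I| \geqslant \left| \int_I e^{i\xi u(z)}\,\mathd z \right|
\]
combined with the simple observation that if $u$ oscillates by at most $\varepsilon$ on an interval $I$, then choosing $\xi \sim \varepsilon^{-1}$ makes $e^{i\xi u(z)}$ stay within a fixed small arc of the unit circle on all of $I$, so that $\left| \int_I e^{i\xi u}\,\mathd z \right| \gtrsim |I|$; comparing this with the bound $C|I|^\gamma |\xi|^{-\rho} \sim C|I|^\gamma \varepsilon^{\rho}$ from \eqref{eq:def-rho-irr} yields $|I| \lesssim |I|^\gamma \varepsilon^\rho$, i.e. $\varepsilon \gtrsim |I|^{(1-\gamma)/\rho} = |I|^{\alpha^\ast}$. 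In other words: on any interval of length $\delta$, $u$ must oscillate by at least a constant multiple of $\delta^{\alpha^\ast}$. This single estimate is the engine behind all three parts.

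For part \emph{a}), given $\bar y \in \mathbb{T}$, $\delta > 0$ and $\alpha > \alpha^\ast$, apply the oscillation bound on $I = B_\delta(\bar y)$ (intersected with a fundamental domain so that \eqref{eq:def-rho-irr} applies to the periodic extension): there exist points $z_1, z_2 \in I$ with $|u(z_1) - u(z_2)| \gtrsim \delta^{\alpha^\ast}$, hence at least one of them, say $z$, satisfies $|u(\bar y) - u(z)| \gtrsim \delta^{\alpha^\ast} = \delta^{\alpha}\,\delta^{\alpha^\ast - \alpha}$. Since $\alpha^\ast - \alpha < 0$, the factor $\delta^{\alpha^\ast-\alpha} \to \infty$ as $\delta \to 0$; thus for every prospective constant $L$ and every scale one can find $\bar y, \delta$ violating the $\alpha$-Hölder-rough inequality with constant $L$ only in the trivial direction, and in fact $L_\alpha(u) = \sup \inf \cdots = +\infty$. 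One has to be a little careful with the asymmetry between $d_{\mathbb{T}}(\bar y, z) \leqslant \delta$ and the length of $I$, but a factor of $2$ absorbs this. Part \emph{b}) then follows almost immediately: partition $I$ into $n$ subintervals of length $|I|/n$; on each the oscillation is $\gtrsim (|I|/n)^{\alpha^\ast}$, so the $p$-variation sum is $\gtrsim n \cdot (|I|/n)^{p\alpha^\ast} = |I|^{p\alpha^\ast}\, n^{1 - p\alpha^\ast}$, which diverges as $n \to \infty$ precisely when $p\alpha^\ast > 1$, i.e. $p > 1/\alpha^\ast$. (Alternatively one may just cite the embedding $B^\alpha_{1,\infty} \hookrightarrow V^{1/\alpha}$ together with part \emph{c}), but the direct argument is cleaner.)

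For part \emph{c}), suppose for contradiction that $u \in B^\alpha_{1,\infty}(\mathbb{T})$ for some $\alpha > \alpha^\ast$. By the Gagliardo--Nirenberg characterization recalled in the Notations, $\|u(\cdot + h) - u\|_{L^1(\mathbb{T})} \leqslant \llbracket u \rrbracket_{B^\alpha_{1,\infty}}\, |h|^\alpha$ for all $h$. Fix a small scale $\delta$ and estimate $\left| \int_I e^{i\xi u}\,\mathd z \right|$ from above using the regularity of $u$: writing $\left| \int_I e^{i\xi u(z)}\,\mathd z \right| \leqslant \left| \int_I (e^{i\xi u(z)} - e^{i\xi u(z+h)})\,\mathd z \right| + \left| \int_{I} e^{i\xi u(z+h)}\,\mathd z\right|$ does not directly help, so instead I would run the oscillation-lower-bound argument against the regularity \emph{upper} bound. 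Concretely: by part \emph{a}) applied at scale $\delta$, on $I = B_\delta(\bar y)$ there is a point where $u$ differs from $u(\bar y)$ by $\gtrsim \delta^{\alpha^\ast}$; averaging, $\|u - u(\bar y)\|_{L^1(I)} \gtrsim |I|\,\delta^{\alpha^\ast}$ cannot be bounded directly, so the sharp route is: the Besov bound gives $\int_{B_\delta(\bar y)}|u(z) - u(\bar y)|\,\mathd z \lesssim \llbracket u\rrbracket_{B^\alpha_{1,\infty}} \delta^{1+\alpha}$ after integrating the translation estimate in $h$ over $|h|\leqslant \delta$, while the oscillation lower bound, promoted from a pointwise statement to an $L^1$-on-$I$ statement (using that if the oscillation is $\gtrsim \delta^{\alpha^\ast}$ then $|u - c| \gtrsim \delta^{\alpha^\ast}$ on a subinterval of length $\gtrsim \delta$), gives $\int_{B_\delta(\bar y)}|u(z) - u(\bar y)|\,\mathd z \gtrsim \delta^{1+\alpha^\ast}$; since $\alpha^\ast < \alpha$, letting $\delta \to 0$ forces $\llbracket u\rrbracket_{B^\alpha_{1,\infty}} = +\infty$, the contradiction. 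I expect the main technical obstacle to be part \emph{c}): bridging from the pointwise/interval oscillation bound furnished by \eqref{eq:def-rho-irr} to an honest $L^1$ lower bound on $|u - c|$ over a definite fraction of the interval requires a short but genuine argument (e.g. a covering/continuity-in-mean lemma, or directly testing \eqref{eq:def-rho-irr} against $\xi$ of size $\sim \delta^{-\alpha^\ast}$ and comparing with the $B^\alpha_{1,\infty}$ smoothing estimate for $e^{i\xi u}$), and one must track the dependence on $\llbracket u\rrbracket_{B^\alpha_{1,\infty}}$ versus $|\xi|$ carefully so that the exponents $\alpha$ and $\alpha^\ast$ actually separate. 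Parts \emph{a}) and \emph{b}) are essentially immediate consequences of the one-line oscillation estimate and should need only a paragraph each.
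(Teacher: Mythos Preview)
Your oscillation engine for parts \emph{a}) and \emph{b}) is correct and more elementary than the paper's route: the paper imports from~\cite{galeati2020prevalence} the stronger density estimate
\[
\lim_{\varepsilon\to 0^+}\inf_{y}\,\varepsilon^{-1}\mathcal{L}\bigl(h\in(0,\varepsilon):|u(y+h)-u(y)|\geqslant \varepsilon^{\tilde\alpha}\bigr)=1\qquad(\tilde\alpha>\alpha^\ast),
\]
of which your bound $\operatorname{osc}(u;I)\gtrsim|I|^{\alpha^\ast}$ is only the pointwise shadow. One arithmetic slip: $n^{1-p\alpha^\ast}\to\infty$ precisely when $1-p\alpha^\ast>0$, i.e.\ $p<1/\alpha^\ast$, not $p>1/\alpha^\ast$; so your partition argument actually establishes infinite $p$-variation for $p<1/\alpha^\ast$ (which is the direction consistent with the fBm example).

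For part \emph{c}) your primary line has a genuine gap. You claim that the Besov bound yields $\int_{B_\delta(\bar y)}|u(z)-u(\bar y)|\,\mathd z\lesssim \llbracket u\rrbracket_{B^\alpha_{1,\infty}}\delta^{1+\alpha}$ by integrating the translation estimate in $h$; but the seminorm $\llbracket u\rrbracket_{B^\alpha_{1,\infty}}$ controls only $\|u(\cdot+h)-u\|_{L^1(\mathbb{T})}$, an average over \emph{all} basepoints, and no integration in $h$ recovers a bound at a \emph{fixed} $\bar y$. Likewise, the step ``oscillation $\gtrsim\delta^{\alpha^\ast}$ on $I$ implies $|u-c|\gtrsim\delta^{\alpha^\ast}$ on a subinterval of length $\gtrsim\delta$'' is false for merely measurable $u$. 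The paper sidesteps both issues by using the density estimate above (which is uniform in $y$), applying Markov's inequality, and \emph{then} integrating over $y\in\mathbb{T}$ to meet the $L^1$-nature of the Besov seminorm.

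Your alternative suggestion---test~\eqref{eq:def-rho-irr} at $\xi\sim\varepsilon^{-\tilde\alpha}$---does work and is in fact more self-contained than the paper's argument, provided you also integrate over the basepoint from the outset. Concretely: from $\varepsilon=|\varepsilon e^{i\xi u(y)}|\leqslant\bigl|\int_0^\varepsilon e^{i\xi u(y+h)}\mathd h\bigr|+|\xi|\int_0^\varepsilon|u(y+h)-u(y)|\,\mathd h$ and~\eqref{eq:def-rho-irr}, integrate over $y\in\mathbb{T}$ to get $2\pi\varepsilon\lesssim \varepsilon^{\gamma}|\xi|^{-\rho}+|\xi|\int_0^\varepsilon\|u(\cdot+h)-u\|_{L^1}\mathd h$; now set $\xi=\varepsilon^{-\tilde\alpha}$ with $\tilde\alpha\in(\alpha^\ast,\alpha)$, use $\|u(\cdot+h)-u\|_{L^1}\leqslant\llbracket u\rrbracket_{B^\alpha_{1,\infty}}h^\alpha$, divide by $\varepsilon$, and let $\varepsilon\to 0$ to force $\llbracket u\rrbracket_{B^\alpha_{1,\infty}}=+\infty$. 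This buys you a proof of \emph{c}) that does not rely on the black-box density estimate from~\cite{galeati2020prevalence}.
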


\begin{proof}
  For functions $u : [0, T] \rightarrow \mathbb{R}$,
  points~{\tmem{a}}) and~{\tmem{b}}) are proved
  in~{\cite{galeati2020prevalence}}, cf. Corollary~65 and Corollary~68
  therein; we recall here shortly the idea of proof.
  
  Going through the proof of Theorem~63 from~{\cite{galeati2020prevalence}},
  one can establish the (much stronger) fact that, if $u$ is $(\gamma,
  \rho)$-irregular, then for any $\tilde{\alpha} > \alpha^{\ast}$ it holds
  \begin{equation}
    \lim_{\varepsilon \rightarrow 0^+} \inf_{y \in (0, T)} \varepsilon^{- 1}
    \mathcal{L} (h \in (0, \varepsilon) : | u (y + h) - u (y) | \geqslant
    \varepsilon^{\tilde{\alpha}}) = 1, \label{eq:key-irregularity}
  \end{equation}
  where $\mathcal{L}$ denotes the Lebesgue measure on $\mathbb{R}$. In
  particular, there exists $\varepsilon_0 > 0$ such that, for all $0 <
  \varepsilon < \varepsilon_0$, it must hold
  \[ \mathcal{L} (h \in (0, \varepsilon) : | u (y + h) - u (y) | \geqslant
     \varepsilon^{\tilde{\alpha}}) \geqslant \varepsilon / 2 > 0 \quad \forall
     \, y \in (0, T) ; \]
  therefore for any $y \in (0, T)$ we can find infinitely many, arbitrarily
  small $h$ such that $| u (y + h) - u (y) | \geqslant h^{\tilde{\alpha}}$;
  playing with the arbitrariness of $\tilde{\alpha}$, one can then easily
  establish both properties of H\"{o}lder roughness and infinite
  $p$\mbox{-}variation.
  
  Up to identifying $u : \mathbb{T} \rightarrow \mathbb{R}$ with a $2
  \pi$-periodic function, it's easy to check that
  property~\eqref{eq:key-irregularity} carries over to this setting as well,
  as it is only related to the local behaviour or $u$ around any fixed $y$;
  same goes for the proofs of points ~{\tmem{a}}) and~{\tmem{b}}).
  
  We now focus on establishing claim~\tmtextit{c}), which is instead an
  original contribution of this work. Fix $\alpha > \alpha^{\ast}$ and choose
  $\tilde{\alpha} \in (\alpha^{\ast}, \alpha)$; by
  estimate~\eqref{eq:key-irregularity} (with the infimum taken over $y \in
  \mathbb{T}$ instead of $(0, T)$),
  for all $\varepsilon > 0$ sufficiently small, it must hold
  \begin{align*}
    \pi & \leqslant \int_{\mathbb{T}} \varepsilon^{- 1} \mathcal{L} (h \in (0,
    \varepsilon) : | u (y + h) - u (y) | \geqslant
    \varepsilon^{\tilde{\alpha}}) \mathd y\\
    & \leqslant \int_{\mathbb{T}} \varepsilon^{- 1 - \tilde{\alpha}}
    \int_0^{\varepsilon} | u (y + h) - u (y) | \mathd h \mathd y\\
    & = \varepsilon^{- 1 - \tilde{\alpha}} \int_0^{\varepsilon} \left\| u
    \left( \cdot \, + h \right) - u (\cdot) \right\|_{L^1} \mathd h\\
    & \leqslant \varepsilon^{\alpha - \tilde{\alpha}}  \llbracket u
    \rrbracket_{B^{\alpha}_{1, \infty}},
  \end{align*}
  where in the second passage we used Markov's inequality. Since $\alpha >
  \tilde{\alpha}$, letting $\varepsilon \rightarrow 0^+$ we can conclude that
  $\llbracket u \rrbracket_{B^{\alpha}_{1, \infty}} = + \infty$.
\end{proof}

\begin{remark}
  If $u$ is $\rho$-irregular, then
  Proposition~\ref{prop:rho-irr-besov-nikolskii}\mbox{-}c)
  implies that $u$ does not belong to $B^{\alpha}_{1, \infty}$ for any $\alpha
  > (2 \rho)^{- 1}$. Conversely, if $u \in B^{\alpha}_{1, \infty}$, then it
  can only be $\rho$-irregular for parameters $\rho$ satisfying $\rho
  \leqslant (2 \alpha)^{- 1}$.
\end{remark}

\subsection{Prevalence statements and proof of
Theorem~\ref{thm:main-thm-inviscid}}\label{sec:prevalence-mixing}

Given the results of
Sections~\ref{sec:lower-bound-inviscid}--\ref{sec:upper-bound-inviscid}, it is
natural to wonder whether generic elements of $B^{\alpha}_{1, \infty}$ are
``almost as irregular as possible'', in the sense of being $\rho$-irregular
for any $\rho < (2 \alpha)^{- 1}$; we provide here a positive answer.

In order to do so, we will first prove the statement for elements of
$B^{\alpha}_{1, \infty} (0, \pi)$, see
Theorem~\ref{thm:prevalence-interval-inviscid}, and only later deduce the same
property for $B^{\alpha}_{1, \infty} (\mathbb{T})$ by a ``deperiodization''
procedure (cf. Corollary~\ref{cor:prevalence-periodic-inviscid} below).

Differently from Section~\ref{sec:fbm}, whenever dealing with a measure $\mu$
supported on $C ([0, \pi])$, it will be useful to denote by $u = \{ u_y \}_{y
\in [0, \pi]}$ the associated canonical process; we will instead employ the
letter $\varphi$ to denote deterministic functions, either defined on $[0,
\pi]$ or on $\mathbb{T}$.

Before proceeding further, we need to recall the following key result
established in~{\cite{galeati2020prevalence}}, cf. Theorem~29 therein.

\begin{proposition}
  \label{thm:prevalence-rho-irregularity}Let $\mu$ be a Gaussian measure on $C
  ([0, T])$ whose canonical process $u$ is $\beta$-SLND for some $\beta > 0$.
  Then for any $\rho < (2 \beta)^{- 1}$ it holds
  \[ \mu^H \left( u \text{ is } \rho \text{-irregular} \right) = 1. \]
\end{proposition}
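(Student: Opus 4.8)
The plan is to establish the following quantitative version, from which the statement follows by a countable intersection over a sequence $\rho_m\uparrow(2\beta)^{-1}$: for every $\rho<(2\beta)^{-1}$ there is $\gamma\in(1/2,1)$ such that $\mu(\|\Phi^u\|_{\gamma,\rho}<\infty)=1$, where $\Phi^u_t(\xi)=\int_0^t e^{i\xi u_z}\,\mathd z$. (This is essentially the content of Theorem~29 of~\cite{galeati2020prevalence}, itself inspired by~\cite{catellier2016averaging}.) The heart of the matter is a moment bound on the increments $\Phi^u_t(\xi)-\Phi^u_s(\xi)=\int_s^t e^{i\xi u_z}\,\mathd z$. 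Since $u$ is Gaussian, for every $n\geq1$,
\[
  \mathbb{E}\,\big|\Phi^u_t(\xi)-\Phi^u_s(\xi)\big|^{2n}
  =\int_{[s,t]^{2n}}\exp\!\left(-\frac{\xi^2}{2}\,\operatorname{Var}\Big(\sum_{j=1}^n u_{a_j}-\sum_{j=1}^n u_{b_j}\Big)\right)\mathd a\,\mathd b,
\]
so everything reduces to a lower bound, in terms of $\beta$ and of the mutual distances of the points, for $\operatorname{Var}\big(\sum_{k=1}^{2n}\epsilon_k u_{z_k}\big)$ when exactly $n$ of the $\epsilon_k$ equal $+1$ and $n$ equal $-1$.

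To obtain this lower bound I would first pass to ordered configurations $z_1<\cdots<z_{2n}$ in $[s,t]$ (up to the factor $(2n)!$ and a sum over the $\binom{2n}{n}$ sign patterns), and write $\sum_k\epsilon_k u_{z_k}=\sum_{l=2}^{2n}c_l\,(u_{z_l}-u_{z_{l-1}})$ with $c_l=\sum_{k\geq l}\epsilon_k$. The sequence $(c_l)$ is a $\pm1$-bridge of length $2n$, so $c_l\neq0$ for at least $n$ indices $l\in\{2,\dots,2n\}$. Using the $\beta$-SLND property through an iterated law of total variance — conditioning on the filtration $(\mathcal{F}_{z_l})$ and, at each $l$ with $c_l\neq0$, extracting $\operatorname{Var}(u_{z_l}\mid\mathcal{F}_{z_{l-1}})\geq C_X|z_l-z_{l-1}|^{2\beta}$ together with the mutual orthogonality of the successive innovation spaces — one gets $\operatorname{Var}\big(\sum_k\epsilon_k u_{z_k}\big)\gtrsim\sum_{l\in\mathcal{A}}|z_l-z_{l-1}|^{2\beta}$ for an index set $\mathcal{A}$ with $|\mathcal{A}|\geq n$. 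This is the step I expect to be the main obstacle: one must ensure that, for an arbitrary sign pattern, a positive fraction of the gap contributions genuinely survives rather than cancelling against cross-correlations between blocks, and the bridge/orthogonality bookkeeping is precisely what guarantees it.

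Substituting this bound into the integral, changing variables to $z_1$ and the gaps $h_l=z_l-z_{l-1}$, using $\int_0^{t-s}e^{-c\xi^2 h^{2\beta}}\,\mathd h\lesssim\min(t-s,|\xi|^{-1/\beta})$ for the $n$ gaps in $\mathcal{A}$ and bounding the remaining $n-1$ gaps and $z_1$ trivially by $t-s$, one arrives at
\[
  \mathbb{E}\,\big|\Phi^u_t(\xi)-\Phi^u_s(\xi)\big|^{2n}\lesssim_{n}\,\min\!\big((t-s)^{2n},\,(t-s)^n|\xi|^{-n/\beta}\big)\leq(t-s)^{n(2-\theta)}\,|\xi|^{-n\theta/\beta}
\]
for every $\theta\in[0,1]$, uniformly in $\xi\neq0$ and $0\leq s<t\leq T$.

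It remains to upgrade these moment bounds to the pathwise estimate $\|\Phi^u\|_{\gamma,\rho}<\infty$. This is done by a Kolmogorov-type continuity argument carried out jointly in $(t,\xi)$: one estimates in the same way the mixed increments $\mathbb{E}|(\Phi^u_t(\xi)-\Phi^u_s(\xi))-(\Phi^u_t(\eta)-\Phi^u_s(\eta))|^{2n}$, picking up an additional factor $|\xi-\eta|^{2n}$ from the difference of phases, applies a two-parameter Kolmogorov/Garsia--Rodemich--Rumsey estimate on each dyadic frequency block $|\xi|\sim2^{m}$, and sums over $m\geq0$, using that the random constants decay like $2^{-m\rho}$ with $\rho<\theta/(2\beta)$ to absorb the logarithmic loss incurred when passing to the supremum over the block. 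Since $\sum_k\epsilon_k=0$, all these estimates are insensitive to the additive representative of $u$. Finally, given the target $\rho<(2\beta)^{-1}$ one chooses $\theta\in(2\beta\rho,1)$: this yields the decay $|\xi|^{-\rho}$ and a Hölder exponent $\gamma$ in $(1/2,\,1-\theta/2)$, in particular $\gamma>1/2$, which is exactly $(\gamma,\rho)$-irregularity with $\mu$-probability one.
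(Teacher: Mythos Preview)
The paper does not prove this proposition; it simply records it as Theorem~29 of~\cite{galeati2020prevalence} and uses it as a black box. Your outline is therefore an attempted reconstruction of that external argument, and the overall strategy --- moment bounds on the increments of $\Phi^u(\xi)$ followed by a two-parameter Kolmogorov/GRR argument in $(t,\xi)$ --- is indeed the standard one from~\cite{catellier2016averaging,galeati2020prevalence}.

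There is, however, a genuine gap at precisely the step you flag as the main obstacle. You claim that writing $X=\sum_k\epsilon_k u_{z_k}=\sum_{l\geq2}c_l(u_{z_l}-u_{z_{l-1}})$ and invoking orthogonality of the innovation spaces $V_l=H_{z_l}\ominus H_{z_{l-1}}$ gives $\operatorname{Var}(X)\gtrsim\sum_{l\in\mathcal A}|z_l-z_{l-1}|^{2\beta}$. The bridge argument for $|\mathcal A|\geq n$ is fine, and the $V_l$ are indeed orthogonal, but the projection $P_{V_l}X$ is \emph{not} $c_lI_l$: every $u_{z_k}$ with $k>l$ also has a (generally nonzero) component in $V_l$, so $P_{V_l}X=\sum_{k\geq l}\epsilon_k\,P_{V_l}u_{z_k}$, and one-sided SLND (Definition~\ref{def:LND}, which conditions only on the \emph{past}) gives no control on these future contributions --- nothing prevents them from cancelling $\epsilon_lI_l$. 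Equivalently, in the iterated total-variance decomposition the residual $\mathbb E[X\mid\mathcal F_{z_{2n-1}}]$ still depends on $z_{2n}$ through $\mathbb E[u_{z_{2n}}\mid\mathcal F_{z_{2n-1}}]$, so the recursion does not decouple the variables as you need.

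The sum-of-gaps variance bound \emph{does} follow from the stronger two-sided (Berman--Pitt) LND condition $\operatorname{Var}(u_t\mid u_{s_1},\dots,u_{s_m})\gtrsim\min_j|t-s_j|^{2\beta}$, which fBm happens to satisfy; so your argument would go through for $\mu^H$ specifically, which is all the paper ultimately uses. But the proposition is stated for arbitrary one-sided $\beta$-SLND Gaussian measures, and there one must argue differently --- the proofs in the cited references exploit the conditional estimate $|\mathbb E[e^{i\xi u_r}\mid\mathcal F_s]|\leq e^{-c\xi^2|r-s|^{2\beta}}$ recursively \emph{inside} the expectation, peeling off one time variable at a time, rather than seeking a pointwise lower bound on $\operatorname{Var}(\sum_k\epsilon_k u_{z_k})$.
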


We can combine Proposition~\ref{thm:prevalence-rho-irregularity} with the
invariance of the $\beta$-SLND property from Lemma~\ref{lem:LND-translation}
to deduce a first prevalence statement.

\begin{theorem}
  \label{thm:prevalence-interval-inviscid}Let $\alpha \in (0, 1)$; then a.e.
  $\varphi \in B^{\alpha}_{1, \infty} (0, \pi)$ is $\rho$-irregular for every
  $\rho < (2 \alpha)^{- 1}$.
\end{theorem}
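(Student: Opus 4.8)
The plan is to verify the prevalence statement through the criterion from equation~\eqref{eq:key-prevalence}: I need to exhibit a tight probability measure $\mu$ on $B^{\alpha}_{1,\infty}(0,\pi)$ such that, for every fixed $\varphi \in B^{\alpha}_{1,\infty}(0,\pi)$, the translated random function $\varphi + u$ (with $u$ distributed according to $\mu$) is almost surely $\rho$-irregular for every $\rho < (2\alpha)^{-1}$. The natural candidate for $\mu$ is the law $\mu^H$ of fractional Brownian motion on $C([0,\pi])$ with a suitably chosen Hurst parameter $H$.

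First I would choose the Hurst parameter. Since I want $\rho$-irregularity for every $\rho < (2\alpha)^{-1}$, and Proposition~\ref{thm:prevalence-rho-irregularity} delivers $\rho$-irregularity for every $\rho < (2\beta)^{-1}$ when the driving process is $\beta$-SLND, I need $\beta$ as close to $\alpha$ as possible. By Lemma~\ref{lem:fBm-LND} the canonical process of $\mu^H$ is $H$-SLND. However, a single fBm with $H = \alpha$ would only live in $B^{\alpha - \varepsilon}_{1,\infty}$, not in $B^{\alpha}_{1,\infty}$ (recall $\mu^H(B^H_{p,q}) = 0$ for finite $q$, but $\mu^H(B^H_{p,\infty}) = 1$); on the other hand $\mu^{\alpha}(B^{\alpha}_{1,\infty}) = 1$ since $p = 1 \in [1,\infty)$ and the target space has $q = \infty$. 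So in fact $\mu^H$ with $H = \alpha$ is supported on $B^{\alpha}_{1,\infty}(0,\pi)$ and, being a Gaussian measure on the Polish space $C([0,\pi])$ hence tight there, its restriction is tight on $B^{\alpha-\varepsilon}_{1,\infty}$; tightness on $B^{\alpha}_{1,\infty}$ itself follows from the Ciesielski-type characterization since bounded sets of $B^{\alpha'}_{1,\infty}$ with $\alpha' > \alpha$... but here I must be a little careful, as $B^{\alpha}_{1,\infty}$ is not separable. The cleanest route: observe that the law of fBm is carried by the separable closure of smooth functions inside a slightly larger Besov space, and invoke the tightness discussion from Section~\ref{sec:fbm}. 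This lets me bypass the separability issue exactly as the paper set up in its prevalence framework (one only needs \emph{tightness}, not separability, to get Point~\emph{i.} of Definition~\ref{def:prevalence}).

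Next, with $\mu = \mu^{\alpha}$ fixed, I apply the translation invariance: for any deterministic $\varphi \in B^{\alpha}_{1,\infty}(0,\pi) \subset C([0,\pi])$ — wait, $B^{\alpha}_{1,\infty}$ functions need not be continuous, but $\varphi$ is still a measurable (indeed $L^1$) function $[0,\pi] \to \mathbb{R}$, which is all that Lemma~\ref{lem:LND-translation} requires. Hence $\varphi + u$ is again an $\alpha$-SLND Gaussian process (its paths are a.s.\ in $\varphi + C([0,\pi])$, a tight set of functions). By Proposition~\ref{thm:prevalence-rho-irregularity} applied to the law of $\varphi + u$, we get that $\varphi + u$ is $\rho$-irregular almost surely, for every $\rho < (2\alpha)^{-1}$; taking a countable sequence $\rho_n \uparrow (2\alpha)^{-1}$ and intersecting the corresponding full-measure events (Property~5 of prevalence, countable intersection) gives a single almost-sure event. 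This establishes~\eqref{eq:key-prevalence} for the property $\mathcal{P} = $ ``$\rho$-irregular for all $\rho < (2\alpha)^{-1}$'', which is exactly the claim.

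The main obstacle I anticipate is purely bookkeeping around the non-separability of $B^{\alpha}_{1,\infty}(0,\pi)$: one has to confirm that (a) the set of $\rho$-irregular functions is Borel measurable in $B^{\alpha}_{1,\infty}$ (the quantity $\|\Phi^u\|_{\gamma,\rho}$ is a countable supremum over rationals, so this should be fine), and (b) $\mu^{\alpha}$, viewed as a measure on $B^{\alpha}_{1,\infty}$, is tight — which, as noted, follows from the compact embeddings of balls of $B^{\alpha'}_{1,\infty}$ ($\alpha' > \alpha$, where fBm of parameter $\alpha' < \alpha$... no: fBm of Hurst $\alpha$ lives in $B^{\alpha - \varepsilon}_{1,\infty}$ for every $\varepsilon$, and balls there are compact in $B^{\alpha - 2\varepsilon}_{1,\infty}$) — and the tightness bookkeeping was already prepared in Section~\ref{sec:fbm}. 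Everything else is a direct chaining of the three cited results, so the proof should be short.
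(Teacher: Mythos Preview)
Your overall strategy is right and matches the paper's: use the law of fBm as a transverse measure, invoke the $\beta$-SLND property, its stability under deterministic translations (Lemma~\ref{lem:LND-translation}), and Proposition~\ref{thm:prevalence-rho-irregularity}. The Borel measurability of the relevant sets is handled in the paper exactly as you sketch.

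However, there is a genuine gap in your choice of Hurst parameter. You take $H=\alpha$ and then need $\mu^{\alpha}$ to be \emph{tight} on $B^{\alpha}_{1,\infty}(0,\pi)$. Your attempts to justify this are confused: you note that balls of $B^{\alpha-\varepsilon}_{1,\infty}$ are compact in $B^{\alpha-2\varepsilon}_{1,\infty}$, but that is the wrong direction---you need compactness in $B^{\alpha}_{1,\infty}$, not in a weaker space. The natural route to tightness (bounded sets of a strictly stronger Besov space embed compactly) is unavailable, since $\mu^{\alpha}(B^{\alpha'}_{1,\infty})=0$ for every $\alpha'>\alpha$. And Section~\ref{sec:fbm} only records tightness of $\mu^H$ on $B^{H-\varepsilon}_{p,\infty}$, not on $B^{H}_{p,\infty}$; so the ``tightness bookkeeping'' you appeal to is not there.

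The paper circumvents this cleanly: instead of one measure for all $\rho$, it fixes $\rho<(2\alpha)^{-1}$ and chooses $H\in(\alpha,(2\rho)^{-1})$. Then $\mu^H$ is supported on $C^{H-\varepsilon}$ with $H-\varepsilon>\alpha$, so tightness on $B^{\alpha}_{1,\infty}$ is immediate by Ascoli--Arzel\`a, and one still gets $\rho$-irregularity from Proposition~\ref{thm:prevalence-rho-irregularity} since $\rho<(2H)^{-1}$. This shows each $\mathcal{A}_\rho$ is prevalent (with a $\rho$-dependent witnessing measure), and the conclusion follows from the stability of prevalence under countable intersections. Your proof becomes correct once you make this adjustment.
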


\begin{proof}
  Given $\rho > 0$, define the set
  \[ \mathcal{A}_{\rho} = \left\{ \varphi \in B^{\alpha}_{1, \infty} (0, \pi)
     : \varphi \text{ is $\rho$-irregular} \right\} ; \]
  it holds
  \[ \mathcal{A}_{\rho} = \bigcup_{n, m = 3}^{\infty} \mathcal{A}_{\rho, n,
     m}, \]
  with
  \[ \mathcal{A}_{\rho, n, m} \assign \left\{ \varphi \in B^{\alpha}_{1,
     \infty} (0, \pi) : \varphi \text{ is $(\gamma, \rho)$-irr. for } \gamma =
     \frac{1}{2} + \frac{1}{n}, \| \Phi^{\varphi} \|_{\gamma, \rho} \leqslant
     m  \right\} . \]
  The sets $\mathcal{A}_{\rho, n, m}$ are closed in the topology of
  $B^{\alpha}_{1, \infty} (0, \pi)$ (the map $\varphi \mapsto \|
  \Phi^{\varphi} \|_{\gamma, \rho}$ is lower semicontinuous in the topology of
  $L^1 (0, \pi)$), thus $\mathcal{A}_{\rho}$ is Borel measurable. If we show
  that $\mathcal{A}_{\rho}$ is prevalent in $B^{\alpha}_{1, \infty} (0, \pi)$
  for any $\rho < (2 \alpha)^{- 1}$, then the same holds for
  \[ \mathcal{A} = \left\{ \varphi \in B^{\alpha}_{1, \infty} (0, \pi) :
     \varphi \text{ is $\rho$-irregular for every } \rho < \frac{1}{2 \alpha}
     \right\} = \bigcap_{n = 1}^{\infty} \mathcal{A}_{\frac{1}{2 \alpha} -
     \frac{1}{n}} \]
  providing the conclusion.
  
  Now fix $\rho < (2 \alpha)^{- 1}$ and choose $H \in (0, 1)$ such that $H >
  \alpha$, $\rho < (2 H)^{- 1}$; denote by $\mu^H$ the law of fractional
  Brownian motion on $C ([0, \pi])$ and by $u = \{ u_y, y \in [0, \pi] \}$ the
  associated canonical process. Since $\mu^H$ is supported on $C^{H -
  \varepsilon} ([0, \pi])$ for any $\varepsilon > 0$ and $H > \alpha$, it is
  also a tight probability measure on $B^{\alpha}_{1, \infty} (0, \pi)$; thus
  we only need to verify Property~\tmtextit{ii.} from
  Definition~\ref{def:prevalence}, equivalently
  property~\eqref{eq:key-prevalence} for $E = B^{\alpha}_{1, \infty}$.
  
  Fix $\varphi \in B^{\alpha}_{1, \infty} (0, \pi)$; by
  Proposition~\ref{lem:fBm-LND}, $u$ is a $H$-SLND process and so by
  Lemma~\ref{lem:LND-translation} the same holds for $u + \varphi$. In turn,
  by our choise of the parameters and
  Proposition~\ref{thm:prevalence-rho-irregularity}, this implies that
  $\varphi + u$ is $\mu^H$-a.s. $\rho$-irregular; as the argument holds for
  any $\varphi \in B^{\alpha}_{1, \infty} (0, \pi)$, we have shown that
  \[ \mu^H (\varphi + \mathcal{A}_{\rho}) = 1 \quad \forall \, \varphi \in
     B^{\alpha}_{1, \infty} (0, \pi), \]
  namely that $\mu^H$ witnesses the prevalence of $\mathcal{A}_{\rho}$ in
  $B^{\alpha}_{1, \infty} (0, \pi)$.
\end{proof}

We pass to show how to exploit Theorem~\ref{thm:prevalence-interval-inviscid}
to establish similar statement for functions defined on the torus.

We identify the torus $\mathbb{T}$ with the interval $[- \pi, \pi]$, up to $-
\pi \sim \pi$; thus any measurable function $\varphi : \mathbb{T} \rightarrow
\mathbb{R}$ can be identified with $\varphi : [- \pi, \pi] \rightarrow
\mathbb{R}$ such that $\varphi (- \pi) = \varphi (\pi)$. Any such $\varphi$ is
in a 1-1 correspondence with a pair $(\varphi_1, \varphi_2)$ of measurable
functions defined on $[0, \pi]$, given by $\varphi_1 (y) : = \varphi (y)$,
$\varphi_2 (y) : = \varphi (- y)$; they satisfy the constraint $\varphi_1
(\pi) = \varphi_2 (\pi)$. The $\rho$-irregularity property of the periodic
function $\varphi$ is actually equivalent to that of the aperiodic functions
$\varphi_i$.

\begin{lemma}
  \label{lem:techlem-rho}A measurable function $\varphi : \mathbb{T}
  \rightarrow \mathbb{R}$ is $(\gamma, \rho)$-irregular if and only if the
  functions $\varphi_1, \varphi_2 : [0, \pi] \rightarrow \mathbb{R}$ are so.
\end{lemma}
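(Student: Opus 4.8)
The plan is to prove both implications by direct manipulation of the oscillatory integrals $\xi \mapsto \int_I e^{i\xi(\cdot)}$, exploiting that a translation by a period and a reflection $z \mapsto c - z$ leave both the length of an interval and the modulus of such an integral unchanged. It will be convenient to read the defining bound \eqref{eq:def-rho-irr} for a function on $\mathbb{T}$ as being tested only against subintervals $I$ of a fundamental domain of $\mathbb{T}$ (equivalently, arcs of length at most $2\pi$): this is the natural convention, since against longer intervals periodicity would force the average $\int_{\mathbb{T}} e^{i\xi\varphi}$ itself to satisfy an $|I|^{\gamma}|\xi|^{-\rho}$ bound, which fails as $\xi \to 0$.

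For the implication ``$\varphi$ is $(\gamma,\rho)$-irregular $\Rightarrow$ $\varphi_1,\varphi_2$ are'', I would argue directly. If $I \subset [0,\pi]$ then $\varphi_1 \equiv \varphi$ on $I$, so $\int_I e^{i\xi\varphi_1(z)}\,\mathrm{d}z = \int_I e^{i\xi\varphi(z)}\,\mathrm{d}z$ and \eqref{eq:def-rho-irr} for $\varphi$ applies unchanged. For $\varphi_2$, given $I = [a,b] \subset [0,\pi]$ the substitution $w = -z$ gives $\int_a^b e^{i\xi\varphi_2(z)}\,\mathrm{d}z = \int_{-b}^{-a} e^{i\xi\varphi(w)}\,\mathrm{d}w$, and $[-b,-a] \subset [-\pi,0]$ has length $|I|$, so again the bound for $\varphi$ transfers. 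This already yields $\|\Phi^{\varphi_1}\|_{\gamma,\rho},\,\|\Phi^{\varphi_2}\|_{\gamma,\rho} \leqslant \|\Phi^{\varphi}\|_{\gamma,\rho}$.

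For the converse, the plan is: given an arc $I$, lift it and translate it by a multiple of $2\pi$ (which does not change the integral, by periodicity) so that it lies in the window $[-\pi,3\pi)$. Cut $I$ at the points of $\pi\mathbb{Z}$ into at most four subintervals $J_1,\dots,J_4$, each contained in one of $[-\pi,0]$, $[0,\pi]$, $[\pi,2\pi]$, $[2\pi,3\pi]$. On each of these four blocks the periodic extension of $\varphi$ equals, after a translation by a period and possibly the reflection $z \mapsto c-z$, either $\varphi_1$ or $\varphi_2$ restricted to $[0,\pi]$; since these operations preserve interval lengths and the modulus of $\int e^{i\xi(\cdot)}$, the $(\gamma,\rho)$-irregularity of $\varphi_1$ and $\varphi_2$ gives $\bigl|\int_{J_m} e^{i\xi\varphi(z)}\,\mathrm{d}z\bigr| \leqslant C\,|J_m|^{\gamma}\,|\xi|^{-\rho}$ with $C = \max(\|\Phi^{\varphi_1}\|_{\gamma,\rho},\|\Phi^{\varphi_2}\|_{\gamma,\rho})$. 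Summing over $m$ and using subadditivity/concavity of $t \mapsto t^{\gamma}$ (so that $\sum_{m=1}^4 |J_m|^{\gamma} \leqslant 4^{1-\gamma}|I|^{\gamma}$) yields the bound for $\varphi$ with constant $4^{1-\gamma}C$.

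I expect no real obstacle; the only care needed is bookkeeping — tracking on each of the four blocks whether $\varphi_1$ or $\varphi_2$ appears and with which reflection — together with the explicit remark that for periodic functions the supremum in the definition runs over arcs of $\mathbb{T}$, so that the four-block decomposition is exhaustive. The constant $4^{1-\gamma}$ in the converse is surely not optimal, but only the finiteness of $\|\Phi^{\varphi}\|_{\gamma,\rho}$ is used in the sequel, so this is immaterial.
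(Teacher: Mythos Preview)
Your proposal is correct and follows essentially the same approach as the paper: split the interval at points of $\pi\mathbb{Z}$ into pieces on which $\varphi$ coincides (up to reflection) with $\varphi_1$ or $\varphi_2$, and transfer the bound piece by piece. The paper is slightly more economical: it works directly with $I\subset[-\pi,\pi]$, so only two pieces $I_1=I\cap[0,\pi]$ and $I_2=I\cap[-\pi,0]$ are needed, yielding the cleaner two-sided inequality $\max\{\|\Phi^{\varphi_1}\|_{\gamma,\rho},\|\Phi^{\varphi_2}\|_{\gamma,\rho}\}\leqslant\|\Phi^{\varphi}\|_{\gamma,\rho}\leqslant 2\max\{\|\Phi^{\varphi_1}\|_{\gamma,\rho},\|\Phi^{\varphi_2}\|_{\gamma,\rho}\}$ in place of your constant $4^{1-\gamma}$; your extra care about arcs wrapping around the identification point is harmless but not needed under the paper's convention.
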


\begin{proof}
  The proof is elementary. Given $I \subset [- \pi, \pi]$, setting $I_1 = I
  \cap [0, \pi]$, $I_2 = I \cap [- \pi, 0]$ it holds $\max \{ | I_1 |, | I_2 |
  \} \leqslant | I | \leqslant 2 \max \{ | I_1 |, | I_2 | \}$, so that
  \[ \max \{ \| \Phi^{\varphi_1} \|_{\gamma, \rho}, \| \Phi^{\varphi_2}
     \|_{\gamma, \rho} \} \leqslant \| \Phi^{\varphi} \|_{\gamma, \rho}
     \leqslant 2 \max \{ \| \Phi^{\varphi_1} \|_{\gamma, \rho}, \|
     \Phi^{\varphi_2} \|_{\gamma, \rho} \} . \qedhere \]
\end{proof}

Conversely, given a measurable $\tilde{\varphi} : [0, \pi] \rightarrow
\mathbb{R}$, we can associate it another function $\varphi = T \tilde{\varphi}
: \mathbb{T} \rightarrow \mathbb{R}$ by setting $T \tilde{\varphi} (y) =
\tilde{\varphi} (| y |)$, which corresponds to $(T \tilde{\varphi})_1 = (T
\tilde{\varphi})_2 = \tilde{\varphi} $. It immediately follows from
Lemma~\ref{lem:techlem-rho} that $T \tilde{\varphi}$ is $(\gamma,
\rho)$-irregular if and only if $\tilde{\varphi}$ is so; it is also easy to
check that, if $\tilde{\varphi} \in B^{\alpha}_{1, \infty} (0, \pi) \cap
L^{\infty} (0, \pi)$, then $T \tilde{\varphi} \in B^{\alpha}_{1, \infty}
(\mathbb{T})$.

We are finally ready to prove a prevalence statement in $B^{\alpha}_{1,
\infty} (\mathbb{T})$.

\begin{corollary}
  \label{cor:prevalence-periodic-inviscid}Let $\alpha \in (0, 1)$, then a.e.
  $\varphi \in B^{\alpha}_{1, \infty} (\mathbb{T})$ is $\rho$-irregular for
  any $\rho < (2 \alpha)^{- 1}$.
\end{corollary}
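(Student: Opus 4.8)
The plan is to reduce the periodic statement to the already-proven statement on the interval $[0,\pi]$, namely Theorem~\ref{thm:prevalence-interval-inviscid}, using the correspondence between functions on $\mathbb{T}$ and pairs of functions on $[0,\pi]$ described just above, together with the transfer map $T$. The key observation is that $B^\alpha_{1,\infty}(\mathbb{T})$ decomposes, via $\varphi \mapsto (\varphi_1,\varphi_2)$, into a closed subspace of $B^\alpha_{1,\infty}(0,\pi)\times B^\alpha_{1,\infty}(0,\pi)$ cut out by the single linear constraint $\varphi_1(\pi)=\varphi_2(\pi)$; but rather than fight with that constraint directly, I would use the map $T\tilde\varphi(y)=\tilde\varphi(|y|)$, which sends $B^\alpha_{1,\infty}(0,\pi)\cap L^\infty$ into $B^\alpha_{1,\infty}(\mathbb{T})$ and, by Lemma~\ref{lem:techlem-rho}, preserves $(\gamma,\rho)$-irregularity in both directions.

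First I would set up the transverse measure directly on $B^\alpha_{1,\infty}(\mathbb{T})$: take $H\in(0,1)$ with $H>\alpha$ and $\rho<(2H)^{-1}$, let $u=\{u_y\}_{y\in[0,\pi]}$ be the canonical process of fractional Brownian motion $\mu^H$ on $C([0,\pi])$, and push it forward under $T$ to obtain a measure $\nu^H := (T)_* \mu^H$ on $C(\mathbb{T})$. Since $\mu^H$ is tight on $B^\alpha_{1,\infty}(0,\pi)$ (as $H>\alpha$) and $T$ is continuous from $C([0,\pi])$ into $C(\mathbb{T})$ mapping the relevant Besov balls into Besov balls, $\nu^H$ is tight on $B^\alpha_{1,\infty}(\mathbb{T})$. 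Then for an arbitrary $\varphi\in B^\alpha_{1,\infty}(\mathbb{T})$ I must check that $\varphi + Tu$ is $\mu^H$-a.s.\ $\rho$-irregular. By Lemma~\ref{lem:techlem-rho}, $\varphi + Tu$ is $(\gamma,\rho)$-irregular iff both $(\varphi+Tu)_1$ and $(\varphi+Tu)_2$ are, and $(\varphi+Tu)_i = \varphi_i + u$ (since $(Tu)_1=(Tu)_2=u$). Each $\varphi_i$ lies in $B^\alpha_{1,\infty}(0,\pi)$, so by Lemma~\ref{lem:LND-translation} the process $\varphi_i+u$ is $H$-SLND, and by Proposition~\ref{thm:prevalence-rho-irregularity} it is $\mu^H$-a.s.\ $\rho$-irregular. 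Intersecting the two almost-sure events gives that $\varphi+Tu$ is $\mu^H$-a.s.\ $(\gamma,\rho)$-irregular for the relevant $\gamma>1/2$, hence $\mu^H$-a.s.\ $\rho$-irregular.

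To finish, I would wrap this in the same measurability-and-countable-intersection bookkeeping as in the proof of Theorem~\ref{thm:prevalence-interval-inviscid}: define $\mathcal{A}_\rho=\{\varphi\in B^\alpha_{1,\infty}(\mathbb{T}): \varphi \text{ is }\rho\text{-irregular}\}$, write it as a countable union of the closed sets $\mathcal{A}_{\rho,n,m}$ defined by $\gamma=\tfrac12+\tfrac1n$ and $\|\Phi^\varphi\|_{\gamma,\rho}\le m$ (closedness again from lower semicontinuity of $\varphi\mapsto\|\Phi^\varphi\|_{\gamma,\rho}$ in the $L^1(\mathbb{T})$ topology, which is coarser than that of $B^\alpha_{1,\infty}$), so $\mathcal{A}_\rho$ is Borel; the computation above shows $\nu^H(\varphi+\mathcal{A}_\rho)=1$ for all $\varphi$, so $\nu^H$ witnesses prevalence of $\mathcal{A}_\rho$. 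Finally $\mathcal{A}=\bigcap_{n\ge1}\mathcal{A}_{(2\alpha)^{-1}-1/n}$ is a countable intersection of prevalent sets, hence prevalent, which is the claim. I expect no serious obstacle here; the only point requiring a little care is verifying tightness of $\nu^H$ on the non-separable space $B^\alpha_{1,\infty}(\mathbb{T})$ and the claim that $T$ maps the bounded Besov balls of $B^\alpha_{1,\infty}(0,\pi)$ continuously into $B^\alpha_{1,\infty}(\mathbb{T})$ — but this is exactly the elementary gluing fact already noted in the text preceding the corollary (reflection at the endpoint $y=0$ and periodicity at $y=\pm\pi$ preserve the finite-difference $L^1$ modulus up to constants), so it only needs to be spelled out briefly.
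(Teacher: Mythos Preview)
Your proposal is correct and follows essentially the same approach as the paper's proof: construct the transverse measure $\nu^H = T_\sharp \mu^H$ by pushing forward the fBm law under the even-extension map $T$, then use Lemma~\ref{lem:techlem-rho} to reduce $\rho$-irregularity of $\varphi + Tu$ on $\mathbb{T}$ to that of $\varphi_i + u$ on $[0,\pi]$, which is handled by the already-established prevalence on the interval. The paper is slightly more terse (it omits the explicit Borel-measurability argument by referring back to Theorem~\ref{thm:prevalence-interval-inviscid} and does not dwell on tightness of $\nu^H$), but the logic is identical.
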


\begin{proof}
  The proof that the set
  \[ \mathcal{A} \assign \left\{ \varphi \in B^{\alpha}_{1, \infty}
     (\mathbb{T}) : \varphi \text{ is } \rho \text{-irregular for any } \rho <
     \frac{1}{2 \alpha}  \right\} \]
  is Borel in the topology of $B^{\alpha}_{1, \infty} (\mathbb{T})$ is
  identical to that of Theorem~\ref{thm:prevalence-interval-inviscid} and thus
  omitted; as therein, we can introduce the sets $\mathcal{A}_{\rho}$ and
  reduce the task to establish the prevalence of the set $\mathcal{A}_{\rho}$
  for any fixed $\rho < (2 \alpha)^{- 1}$.
  
  Choose $H \in (0, 1)$ such that $H > \alpha$, $\rho < (2 H)^{- 1}$ and
  denote by $\mu^H$ the associated law of fBm; since it is supported on
  $B^{\alpha}_{1, \infty} (0, \pi) \cap L^{\infty} (0, \pi)$, we can define a
  new measure on $B^{\alpha}_{1, \infty} (\mathbb{T})$ by $\nu^H : =
  T_{\sharp} \mu^H$, where $(T \tilde{\varphi}) (y) = \tilde{\varphi} (| y |)$
  for $y \in [0, \pi]$ and $T_{\sharp}$ denotes the pushforward measure.
  
  Recall the notation $\varphi_1, \varphi_2$ from Lemma~\ref{lem:techlem-rho};
  for any $\varphi \in B^{\alpha}_{1, \infty} (\mathbb{T})$ it holds
  \begin{align*}
    \nu^H (\varphi + \mathcal{A}) & = \mu^H \left( \left\{ u \in
    B^{\alpha}_{1, \infty} (0, \pi) : \, T u + \varphi \text{ is } \rho
    \text{-irregular}  \right\} \right)\\
    & = \mu^H \left( \bigcap_{i = 1}^2 \left\{ u \in B^{\alpha}_{1, \infty}
    (0, \pi) : \, u + \varphi_i \text{ is } \rho \text{-irregular}  \right\}
    \right) = 1 ;
  \end{align*}
  in the last passage we used the already established properties of the
  measure $\mu^H$ from the proof of
  Theorem~\ref{thm:prevalence-interval-inviscid}, as well as the fact that the
  intersection of sets of full measure is still of full measure. Overall, this
  shows that $\nu^H$ witnesses the prevalence of the set $\mathcal{A}_{\rho}$;
  the conclusion follows using the fact that countable intersection of
  prevalent sets is prevalent.
\end{proof}

We are now ready to complete the

\begin{proof}[Proof of Theorem~\ref{thm:main-thm-inviscid}]
  The lower bound comes from Corollary~\ref{cor:inviscid-lower-bound}, while
  the upper bound from a combination of Lemma~\ref{lem:estim-mixing-rho} and
  Corollary~\ref{cor:prevalence-periodic-inviscid}.
\end{proof}

\section{Enhanced dissipation}\label{sec:enhanced-dissipation}

This section contains the proof of Theorem~\ref{thm:main-thm-enhanced} split
in several steps.

Recall the setting: we want to study the asymptotic behavior of the family of
complex-valued PDEs~\eqref{eq:harmonic-oscillator}, equivalently obtain upper
and lower bounds on
\[ \| e^{t L_{k, \nu}} \|_{L^2 (\mathbb{T}; \mathbb{C}) \rightarrow L^2
   (\mathbb{T}; \mathbb{C})} \quad \text{as } t \rightarrow \infty, \]
where $L_{k, \nu} \assign - i k u + \nu \partial_y^2 .$

\subsection{Lower bounds in terms of
regularity}\label{sec:lower-bound-enhanced}

We show here that if $u$ has regularity of degree $\alpha \in (0, 1)$, as
measured in a suitable Besov--Nikolskii scale, then the its best possible
diffusion enhancing rate is $r_{\text{dif}}(\nu) \sim
\nu^{\alpha / (2 + \alpha)}$. The precise statement goes as follows.

\begin{proposition}
  \label{prop:lower-bound-enhanced}Let $u \in B^{\alpha}_{1, \infty}
  (\mathbb{T})$ be diffusion enhancing with rate
  $r_{\text{dif}}$, in the sense of
  Definition~\ref{def:diffusion-enhancing}; then there exists a constant $C >
  0$ such that
  \[ r_{\text{dif}} (\nu) \leqslant C \nu^{\frac{\alpha}{\alpha
     + 2}} \]
  for all $\nu \in (0, 1]$.
\end{proposition}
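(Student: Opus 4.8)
The plan is to establish the lower bound on the diffusion enhancing rate by exhibiting a single, carefully chosen test function $g$ whose evolution under $e^{t L_{1, \nu}}$ cannot decay faster than the claimed rate, and to quantify this using the Besov--Nikolskii regularity of $u$ much as in the inviscid case (Lemma~\ref{lem:inviscid-lower-bound}), but now coupled to the short-time effect of the diffusion $\nu \partial_y^2$. Concretely, I would take $k = 1$ and $g(y) = e^{iy}$ (or some fixed smooth $g$), and analyse $f^\nu_t := e^{t L_{1,\nu}} g$. The key heuristic, already laid out in the introduction around~\eqref{eq:relation-inviscid-diss}, is that for times $t \lesssim \nu^{-\alpha/(\alpha+2)}$ the dissipative solution stays comparable to the inviscid one $e^{-itu} g$, which by Lemma~\ref{lem:inviscid-lower-bound} has $\|e^{-itu}g\|_{H^{-1/2}} \gtrsim (1+\|u\|_{B^\alpha_{1,\infty}})^{-1/(2\alpha)} t^{-1/(2\alpha)}$; feeding this lower bound into the energy balance $\frac{\mathd}{\mathd t}\|f^\nu_t\|_{L^2}^2 = -2\nu\|\partial_y f^\nu_t\|_{L^2}^2$ via the interpolation inequality $\|f\|_{L^2} \lesssim \|f\|_{\dot H^{-1/2}}^{1/3}\|\partial_y f\|_{L^2}^{2/3}$ forces $\|f^\nu_t\|_{L^2}$ to remain bounded below by a constant up to time $\tau \sim \nu^{-\alpha/(\alpha+2)}$. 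If the rate were $r_{\text{dif}}(\nu) \gg \nu^{\alpha/(\alpha+2)}$, then Definition~\ref{def:diffusion-enhancing} would force $\|f^\nu_\tau\|_{L^2}$ to be exponentially small at some such $\tau$, a contradiction.

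The steps, in order: (1) Record the energy balance for~\eqref{eq:harmonic-oscillator} with $k=1$, namely $\frac{\mathd}{\mathd t}\|f^\nu_t\|_{L^2}^2 = -2\nu\|\partial_y f^\nu_t\|_{L^2}^2$, which is immediate since $iu$ is skew-adjoint. (2) Establish that the $H^{-1/2}$ norm of $f^\nu_t$ does not decay too fast: the natural route is a Lagrangian Fluctuation--Dissipation relation for~\eqref{eq:harmonic-oscillator} (which the excerpt announces as Proposition~\ref{prop:FDR}), giving an explicit representation of $\|f^\nu_t\|_{H^{-1/2}}$ or $\|f^\nu_t\|_{L^2}$ in terms of the law of $y + \sqrt{2\nu}B_s$ and the oscillatory integral $\int_0^t u(y+\sqrt{2\nu}B_s)\,\mathd s$; combining this with the $B^\alpha_{1,\infty}$ bound on $u$ (exactly as the basic estimate $|e^{ia}-e^{ib}| \le \sqrt2|a-b|^{1/2}$ and the Gagliardo seminorm were used in Lemma~\ref{lem:inviscid-lower-bound}) yields $\|f^\nu_t\|_{H^{-1/2}} \gtrsim (1+\|u\|_{B^\alpha_{1,\infty}})^{-1/(2\alpha)}(t + \nu^{1/2}\cdot(\text{lower order}))^{-1/(2\alpha)}$, so in particular $\gtrsim t^{-1/(2\alpha)}$ for $t \ge 1$ and $\nu \in (0,1]$. (3) Plug this into interpolation and the energy balance, as in~\eqref{eq:relation-inviscid-diss}, to get $\frac{\mathd}{\mathd t}\|f^\nu_t\|_{L^2}^{-4} \gtrsim \nu t^{4/(2\alpha)}$ (with $s=1/2$), and integrate from $1$ to $\tau$ to conclude that $\|f^\nu_t\|_{L^2}$ cannot drop below, say, half its initial value before $\tau \sim \nu^{-\alpha/(\alpha+2)}$. (4) Compare with Definition~\ref{def:diffusion-enhancing}: $\|f^\nu_\tau\|_{L^2} \le C \exp(-r_{\text{dif}}(\nu)\tau)\|g\|_{L^2}$, so $r_{\text{dif}}(\nu)\tau \lesssim 1$, i.e. $r_{\text{dif}}(\nu) \lesssim \tau^{-1} \sim \nu^{\alpha/(\alpha+2)}$. (5) Relabel constants, noting all implicit constants depend only on $\alpha$ and $\|u\|_{B^\alpha_{1,\infty}}$ (and the fixed $g$).

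The main obstacle is Step~(2): in the inviscid case one directly computes $f_t(x,y) = f_0(x-tu(y),y)$, but for $\nu > 0$ the representation~\eqref{eq:dissipative-solution} involves a time-integral of $u$ along a Brownian path, so the lower bound on $\|f^\nu_t\|_{H^{-1/2}}$ cannot be read off pointwise. One must show that the Brownian averaging does not destroy the oscillation estimate faster than the pure transport does — that is, that $\mathbb{E}[|\int_0^t u(y+\sqrt{2\nu}B_s)\,\mathd s - t u(y)|]$ is controlled by $t\,\|u\|_{B^\alpha_{1,\infty}}$ times something like $(\nu t)^{\alpha/2}$ or smaller, using the $L^1$-modulus of continuity of $u$ and the occupation-time/local-time bounds for $B$, so that for $t \le \tau \sim \nu^{-\alpha/(\alpha+2)}$ the perturbation is subdominant. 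This is exactly the Fluctuation--Dissipation machinery in the spirit of~\cite{zelatidrivas} that the excerpt flags as a prerequisite (Proposition~\ref{prop:FDR}), and assembling it correctly — in particular keeping the dependence on $\nu$, $t$ and the $B^\alpha_{1,\infty}$ seminorm explicit and sharp — is where the real work lies; the rest of the argument is the soft interpolation-plus-energy-balance scheme already sketched in the introduction.
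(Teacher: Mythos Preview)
There is a genuine gap in your Step~(3), and it stems from confusing the direction of the heuristic argument around~\eqref{eq:relation-inviscid-diss}. That heuristic derives an \emph{upper} bound on the halving time $\tau$ (i.e.\ it shows enhanced dissipation holds with rate at least $\nu^{\alpha/(\alpha+2)}$) from an \emph{upper} bound on $\|f_t\|_{H^{-s}}$. You are trying to use it for the opposite conclusion. Concretely: the interpolation $\|f\|_{L^2} \lesssim \|f\|_{H^{-1/2}}^{1/3}\|\partial_y f\|_{L^2}^{2/3}$ yields $\|\partial_y f\|_{L^2}^2 \gtrsim \|f\|_{L^2}^3/\|f\|_{H^{-1/2}}$, so a \emph{lower} bound on $\|f^\nu_t\|_{H^{-1/2}}$ (your Step~(2)) gives no useful control on $\|\partial_y f^\nu_t\|_{L^2}^2$. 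Even if you did obtain $\frac{\mathd}{\mathd t}\|f^\nu_t\|_{L^2}^{-4} \gtrsim \nu t^{2/\alpha}$, integrating says $\|f^\nu_t\|_{L^2}^{-4}$ grows \emph{at least} this fast, hence $\|f^\nu_t\|_{L^2}$ decays \emph{at least} this fast --- the wrong direction for a lower bound on $\tau$.

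The paper bypasses $H^{-1/2}$ entirely. The FDR (Proposition~\ref{prop:FDR}) is used not to estimate a negative Sobolev norm of $f^\nu_t$, but to directly upper-bound the energy loss: writing $\|g_0\|_{L^2}^2 - \|g_t\|_{L^2}^2 = \int_{\mathbb{T}}\tmop{Var}(Z^y_t)\,\mathd y$ and then $2\tmop{Var}(Z^y_t)=\mathbb{E}|Z^y_t-\tilde Z^y_t|^2$ for an i.i.d.\ copy $\tilde Z^y_t$, one applies the same inequality $|e^{ia}-e^{ib}|\le\sqrt 2|a-b|^{1/2}$ you mention, together with $\|u(\cdot+h)-u\|_{L^1}\le\llbracket u\rrbracket_{B^\alpha_{1,\infty}}|h|^\alpha$ and $\mathbb{E}|B_s-\tilde B_s|^\alpha\lesssim s^{\alpha/2}$, to get
\[
\|g_0\|_{L^2}^2-\|g_t\|_{L^2}^2 \lesssim \|g_0\|_{H^1}^2\bigl(\nu t + \llbracket u\rrbracket_{B^\alpha_{1,\infty}}\,\nu^{\alpha/2}t^{1+\alpha/2}\bigr)
\]
(Lemma~\ref{lem:lower-bound-enhanced-1}). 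This is a direct lower bound on $\|g_t\|_{L^2}^2$, and comparing it with $\|g_t\|_{L^2}\le C e^{-r_{\text{dif}}(\nu)t}$ at a well-chosen $t$ yields the contradiction if $r_{\text{dif}}(\nu)\gg\nu^{\alpha/(\alpha+2)}$. Your instinct to use the FDR and the $B^\alpha_{1,\infty}$ modulus is correct; the fix is to apply them to the variance representation of the $L^2$ energy loss rather than routing through $H^{-1/2}$ and interpolation.
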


In order to provide estimates for $e^{t L_{k, \nu}}$ it is convenient to study
more generally the properties of solutions $g : \mathbb{T} \rightarrow
\mathbb{C}$ to
\begin{equation}
  \partial_t g + i \xi u g = \nu \partial_y^2 g \label{eq:complex-PDE}
\end{equation}
in function of the parameters $\xi \in \mathbb{R}$, $\nu \in (0, 1)$ and the
shear flow $u$.

The proof of Proposition~\ref{prop:lower-bound-enhanced} follows a similar
strategy to~{\cite{zelatidrivas}} and is based on deriving a Lagrangian
Fluctuation-Dissipation relation (FDR) for the PDE~\eqref{eq:complex-PDE},
which is a result of independent interest.

\begin{proposition}
  \label{prop:FDR}Let $u \in L^1 (\mathbb{T})$, $g$ be a solution
  to~\eqref{eq:complex-PDE} with initial data $g_0 \in L^2 (\mathbb{T};
  \mathbb{C})$; for any $(t, y) \in \mathbb{R}_{\geqslant 0} \times
  \mathbb{T}$, define the complex random variable
  \[ Z^y_t = \exp \left( - i \xi \int_0^t u \left( y + \sqrt{2 \nu} B_s
     \right) \mathd s \right) g_0 \left( y + \sqrt{2 \nu} B_t \right) \]
  where $B$ is a standard real-valued BM. Then we have the following
  Lagrangian FDR:
  \begin{equation}
    \| g_0 \|_{L^2}^2 - \| g_t \|_{L^2}^2 = \int_{\mathbb{T}} \tmop{Var}
    (Z^y_t) \mathd y. \label{eq:FDR}
  \end{equation}
\end{proposition}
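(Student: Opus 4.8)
The plan is to establish the Lagrangian Fluctuation-Dissipation relation \eqref{eq:FDR} via a Feynman--Kac representation of the solution $g$ combined with the It\^{o} isometry / conditional variance decomposition, treating first the case of smooth $u$ and then passing to the limit for $u \in L^1(\mathbb{T})$.

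First I would record the Feynman--Kac formula for \eqref{eq:complex-PDE}: with $X^y_t := y + \sqrt{2\nu} B_t$ the diffusion started at $y$, one has
\[
g_t(y) = \mathbb{E}\!\left[ \exp\!\left( -i\xi \int_0^t u(X^y_s)\,\mathd s \right) g_0(X^y_t) \right] = \mathbb{E}[Z^y_t],
\]
which is exactly \eqref{eq:dissipative-solution} specialised to the hypoelliptic shear setting (here only the $y$-variable diffuses, the $x$-integral having been diagonalised into the Fourier mode $\xi = k$). This identifies $g_t(y) = \mathbb{E}[Z^y_t]$. The key structural input is then the self-adjointness of the generator together with reversibility of $X^y$ with respect to Lebesgue measure on $\mathbb{T}$: the measure $\mathrm{Leb}$ is invariant for the (driftless) Brownian diffusion on the torus, which makes the semigroup $e^{t\nu\partial_y^2}$ self-adjoint on $L^2(\mathbb{T})$. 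Consequently one can represent $\|g_t\|_{L^2}^2$ as a \emph{two-point} expectation: running two independent copies $B, \tilde{B}$ of the Brownian motion and using reversibility to glue their time-reversals, $\|g_t\|_{L^2}^2 = \int_{\mathbb{T}} \mathbb{E}[Z^y_t]\,\overline{\mathbb{E}[Z^y_t]}\,\mathd y = \int_{\mathbb{T}} |\mathbb{E}[Z^y_t]|^2\,\mathd y$. On the other hand $\|g_0\|_{L^2}^2$, again by invariance of Lebesgue measure under the flow of $X^y$, equals $\int_{\mathbb{T}} \mathbb{E}[|Z^y_t|^2]\,\mathd y$ (note $|Z^y_t| = |g_0(X^y_t)|$ since the exponential has modulus one, and $y \mapsto X^y_t$ pushes Lebesgue measure forward to Lebesgue measure in law). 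Subtracting the two identities gives
\[
\|g_0\|_{L^2}^2 - \|g_t\|_{L^2}^2 = \int_{\mathbb{T}} \left( \mathbb{E}[|Z^y_t|^2] - |\mathbb{E}[Z^y_t]|^2 \right) \mathd y = \int_{\mathbb{T}} \operatorname{Var}(Z^y_t)\,\mathd y,
\]
using the definition $\operatorname{Var}(Z) = \mathbb{E}[|Z|^2] - |\mathbb{E}[Z]|^2$ for a complex random variable. This is precisely \eqref{eq:FDR}. An alternative, perhaps cleaner, route to the same identity is the energy-balance / It\^{o} argument: for smooth $u$, apply It\^{o}'s formula to $t \mapsto |Z^y_t|^2$ along the backward martingale, or equivalently differentiate $\|g_t\|_{L^2}^2$ in time to get $\frac{\mathd}{\mathd t}\|g_t\|_{L^2}^2 = -2\nu\|\partial_y g_t\|_{L^2}^2$ and then identify the dissipation term with a time-integral of a quadratic variation that reassembles into $\int_{\mathbb{T}}\operatorname{Var}(Z^y_t)\,\mathd y$; but the reversibility argument above avoids integrating in time and is the shortest.

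The main obstacle will be the passage from smooth $u$ to general $u \in L^1(\mathbb{T})$, since for merely integrable $u$ the quantity $\int_0^t u(X^y_s)\,\mathd s$ must be interpreted via the local time of Brownian motion (as flagged in the paper's footnote), and one must check that (a) $Z^y_t$ is still a well-defined $L^2$ random variable, (b) both sides of \eqref{eq:FDR} are continuous under an approximation $u_n \to u$ in $L^1$, and (c) the semigroup constructed by Fourier methods coincides with the Feynman--Kac one. I would handle this by mollification: take $u_n := u * \rho_n$ smooth with $u_n \to u$ in $L^1(\mathbb{T})$, prove \eqref{eq:FDR} for each $u_n$ by the argument above, and then pass to the limit. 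The left-hand side converges because $g^n_t \to g_t$ in $L^2$ uniformly on $[0,t]$ by continuity of the semigroup $e^{s L_{k,\nu}}$ in $u \in L^1$ (established, or easily established, via the $P_k$ reduction referenced in Appendix~\ref{app:wei-extension}); the right-hand side converges because $\int_0^t u_n(X^y_s)\,\mathd s \to \int_0^t u(X^y_s)\,\mathd s$ in probability (using the occupation-times formula and $L^1$ convergence), the exponential is bounded by $1$, and one invokes dominated convergence after controlling $g_0(X^y_t)$ in $L^2(\mathbb{T} \times \Omega)$ — here the invariance of Lebesgue measure gives $\int_{\mathbb{T}}\mathbb{E}[|g_0(X^y_t)|^2]\,\mathd y = \|g_0\|_{L^2}^2 < \infty$, providing the needed uniform integrability. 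A minor technical point to address along the way is that the exponential $\exp(-i\xi\int_0^t u(X^y_s)\,\mathd s)$ has modulus $1$ even in the local-time interpretation (it is a genuine complex number of unit modulus for a.e.\ path once the integral is defined), which is what makes $|Z^y_t| = |g_0(X^y_t)|$ hold and drives the cancellation; I would state this carefully, citing the local-time construction.
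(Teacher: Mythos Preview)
Your argument is correct and in fact slightly more direct than the paper's. Both proofs rest on the same two ingredients: the Feynman--Kac representation $g_t(y)=\mathbb{E}[Z^y_t]$, and the identity $\int_{\mathbb{T}}\mathbb{E}[|Z^y_t|^2]\,\mathd y=\|g_0\|_{L^2}^2$, which follows from $|Z^y_t|=|g_0(y+\sqrt{2\nu}B_t)|$ and translation invariance of Lebesgue measure. You then simply subtract $\int_{\mathbb{T}}|\mathbb{E}[Z^y_t]|^2\,\mathd y$ from $\int_{\mathbb{T}}\mathbb{E}[|Z^y_t|^2]\,\mathd y$ and are done. The paper instead takes a short detour: it derives the PDE $\partial_t|g|^2=\nu\partial_y^2|g|^2-2\nu|\partial_y g|^2$, introduces an auxiliary $h$ solving the heat equation with $h_0=|g_0|^2$, and shows $\|g_0\|_{L^2}^2-\|g_t\|_{L^2}^2=\int_{\mathbb{T}}(h_t-|g_t|^2)\,\mathd y$ before identifying $h_t(y)=\mathbb{E}[|Z^y_t|^2]$ via a second Feynman--Kac. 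This detour has the minor advantage of exhibiting the energy balance $2\nu\int_0^t\|\partial_y g_s\|_{L^2}^2\,\mathd s$ along the way, but is not needed for the identity itself. Your passage to $u\in L^1(\mathbb{T})$ by mollification, with the local-time interpretation of $\int_0^t u(X^y_s)\,\mathd s$ and dominated convergence controlled by $\int_{\mathbb{T}}\mathbb{E}[|g_0(X^y_t)|^2]\,\mathd y=\|g_0\|_{L^2}^2$, matches the paper's approximation step. One small remark: the phrase about ``reversibility to glue time-reversals'' and ``two-point expectation'' is unnecessary here---you only use translation invariance, not reversibility, and the identity $\|g_t\|_{L^2}^2=\int|\mathbb{E}[Z^y_t]|^2\,\mathd y$ is immediate from $g_t(y)=\mathbb{E}[Z^y_t]$ without any gluing.
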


\begin{proof}
  Without loss of generality, we can assume $u$ and $g_0$ to be smooth, as
  identity~\eqref{eq:FDR} in the general case will follow from an approximation
  argument (the definition of $Z^y_t$ is meaningful for any $u \in L^1
  (\mathbb{T})$, thanks to the properties of the local time of a Brownian
  motion).
  Let us however first show that the r.h.s. of \eqref{eq:FDR} is a well-defined quantity, which can be estimated independently of the smoothness of $u$, $g_0$. Indeed, for any $t\geq 0$ it holds
\begin{align*}
\int_{\mathbb{T}} \mathbb{E}[|Z^y_t|^2] \mathd y
& = \int_{\mathbb{T}} \mathbb{E}\big[\,|g_0|^2(y+\sqrt{2\nu} B_t)\big] \mathd y
= \mathbb{E}\left[ \int_{\mathbb{T}} |g_0|^2(y+\sqrt{2\nu} B_t) \mathd y\right]
= \| g_0\|_{L^2}^2,
\end{align*}
where in the last step we used the invariance of the $L^2$-norm of $g_0$ under (random) translations; the pointwise bound $\tmop{Var} (Z^y_t)\leq \mathbb{E}[|Z^y_t|^2]$ then readily yields an estimate for the r.h.s. of \eqref{eq:FDR}.
  
  Now, by the Feynman--Kac formula, the solution $g$
  to~\eqref{eq:complex-PDE} is given by $g (t, y) =\mathbb{E} [Z^y_t]$.
  Moreover since $u$ is real valued, we have the energy balance
  \[ \| g_0 \|_{L^2}^2 - \| g_t \|_{L^2}^2 = 2 \nu \int_0^t \| \partial_y g_s
     \|_{L^2}^2 \mathd s ; \]
  and more generally, the map $(t, y) \mapsto | g |^2 (t,
  y)$ satisfies
  \[ \partial_t | g |^2 = \nu \partial_y^2 | g |^2 - 2 \nu | \partial_y g |^2
     . \]
  Now let $h$ to be a solution of $\partial_t h = \nu \partial_y^2 h$ with
  initial data $h_0 = | g_0 |^2$. It holds
  \[ \frac{\mathd}{\mathd t} \int_{\mathbb{T}} [| g |^2 - h] \mathd
     y = - 2 \nu \| \partial_y g \|_{L^2}^2, \]
  which implies that
  \[ \| g_0 \|_{L^2}^2 - \| g_t \|_{L^2}^2 = 2 \nu \int_0^t \| \partial_y g
     \|_{L^2}^2 = \int_{\mathbb{T}} [h_t (y) - | g_t (y) |^2] \mathd y. \]
  Finally, since by Feynman--Kac, $h (t, y) =\mathbb{E} \left[ | g_0|^2( y + \sqrt{2 \nu} B_t)  \right]$, we obtain
  \begin{align*}
       \| g_0 \|_{L^2}^2 - \| g_t \|_{L^2}^2
       & = \int_{\mathbb{T}} \left(
       \mathbb{E} \left[ | g_0 |^2( y + \sqrt{2 \nu} B_t)
       \right] - | \mathbb{E} [Z^y_t] |^2 \right) \mathd y\\
       & = \int_{\mathbb{T}} (\mathbb{E} [| Z^y_t |^2] - | \mathbb{E}
       [Z^y_t] |^2) \mathd y
  \end{align*}
  which gives the conclusion.
\end{proof}

\begin{lemma}
  \label{lem:lower-bound-enhanced-1}Let $g_0 \in H^1 (\mathbb{T};
  \mathbb{C})$, $u \in B^{\alpha}_{1, \infty} (\mathbb{T})$ for some $\alpha
  \in (0, 1)$ and $\xi \in \mathbb{R}$. Then there exists $C = C (\alpha) > 0$
  such that the solution $g$ to~\eqref{eq:complex-PDE} satisfies
  \[ \| g_0 \|_{L^2}^2 - \| g_t \|_{L^2}^2 \leqslant C \| g_0 \|^2_{H^1}
     \left( \nu t + \llbracket u \rrbracket_{B^{\alpha}_{1, \infty}} | \xi |
     \nu^{\frac{\alpha}{2}} t^{1 + \frac{\alpha}{2}} \right) \quad \forall \,
     t, \nu > 0. \]
\end{lemma}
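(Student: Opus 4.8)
The plan is to use the Lagrangian Fluctuation-Dissipation relation of Proposition~\ref{prop:FDR}, which reduces the bound to controlling $\int_{\mathbb{T}} \tmop{Var}(Z^y_t)\,\mathd y$. The key is the elementary observation that the variance of a product can be split: writing $Z^y_t = A^y_t\, g_0(y+\sqrt{2\nu}B_t)$ with $A^y_t = \exp(-i\xi\int_0^t u(y+\sqrt{2\nu}B_s)\,\mathd s)$ a phase of modulus $1$, one has $\tmop{Var}(Z^y_t) \lesssim \mathbb{E}[|g_0(y+\sqrt{2\nu}B_t) - g_0(y)|^2] + |g_0(y)|^2\,\mathbb{E}[|A^y_t - 1|^2]$, up to harmless constants (or more simply, bound $\tmop{Var}(Z^y_t)$ by $\mathbb{E}[|Z^y_t - g_0(y)|^2]$ after centering, then triangle-inequality the two fluctuation sources). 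The first term is the diffusive contribution and the second is the shear/phase contribution.

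For the diffusive term, $\mathbb{E}[|g_0(y+\sqrt{2\nu}B_t) - g_0(y)|^2]$ integrated in $y$ is bounded, by Fubini and the standard characterization of $H^1 = B^1_{2,2}$ via finite differences, by $\nu t\,\|g_0\|_{H^1}^2$ up to a constant (using $\mathbb{E}[|B_t|^2]=t$ and $\|g_0(\cdot+h)-g_0\|_{L^2}\lesssim |h|\,\|g_0\|_{H^1}$). For the phase term, I would use $|e^{i a}-1|\leq |a|$ to get $\mathbb{E}[|A^y_t-1|^2]\leq |\xi|^2\,\mathbb{E}[|\int_0^t u(y+\sqrt{2\nu}B_s)\,\mathd s - t\,u(y)|^2]$; but this is the wrong scaling in $\nu$. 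The right move is instead to write $A^y_t - 1 = -i\xi\int_0^t (\text{something})$ is not quite linear, so better: center differently, or use the bound $\mathbb{E}[|A^y_t-1|^2] \lesssim |\xi|\,\mathbb{E}[|\int_0^t (u(y+\sqrt{2\nu}B_s) - u(y))\,\mathd s|]$ via $|e^{ia}-1|\le 2|a|^{1/2}$ after a re-centering of the phase by the deterministic drift $e^{-i\xi t u(y)}$, which factors out of $Z^y_t$ without changing $|Z^y_t|$. Then Fubini gives $\int_{\mathbb{T}}\mathbb{E}[|A^y_t-1|^2]\,\mathd y \lesssim |\xi|\int_0^t \mathbb{E}\big[\|u(\cdot+\sqrt{2\nu}B_s)-u\|_{L^1}\big]\,\mathd s \lesssim |\xi|\,\llbracket u\rrbracket_{B^\alpha_{1,\infty}}\int_0^t \mathbb{E}[|\sqrt{2\nu}B_s|^\alpha]\,\mathd s$, and $\mathbb{E}[|B_s|^\alpha]\sim s^{\alpha/2}$ integrates to $\sim t^{1+\alpha/2}$, producing the claimed $|\xi|\,\llbracket u\rrbracket_{B^\alpha_{1,\infty}}\,\nu^{\alpha/2}t^{1+\alpha/2}$. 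One must also retain a $\|g_0\|_{L^\infty}^2$ in front of this term and absorb it via $\|g_0\|_{L^\infty}\lesssim \|g_0\|_{H^1}$ in dimension one (or keep it as $\|g_0\|_{L^\infty}^2$ and note $H^1(\mathbb{T})\hookrightarrow L^\infty(\mathbb{T})$).

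The main obstacle I anticipate is getting the $\nu^{\alpha/2}$ scaling on the phase term with the correct power of $\xi$: a naive expansion $|e^{ia}-1|\le|a|$ gives $|\xi|^2$ and the wrong $\nu$-power, so one genuinely needs the Hölder-type bound $|e^{ia}-1|\lesssim|a|^{1/2}$ combined with the Besov-$B^\alpha_{1,\infty}$ modulus of continuity of $u$ and the moment estimate $\mathbb{E}[|B_s|^\alpha]\lesssim s^{\alpha/2}$; the deterministic drift $e^{-i\xi t u(y)}$ must be pulled out first (it does not affect $|Z^y_t|$ nor the energy, but it is what makes the remaining phase fluctuation small). Assembling the two contributions, plugging into~\eqref{eq:FDR}, and relabelling the constant to depend only on $\alpha$ then yields the stated inequality.
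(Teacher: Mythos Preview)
Your proposal is correct and follows essentially the same approach as the paper: apply the FDR from Proposition~\ref{prop:FDR}, then split the fluctuation into a diffusive $g_0$-increment piece controlled via $H^1$ and a phase piece controlled via $|e^{ia}-e^{ib}|\leqslant\sqrt{2}\,|a-b|^{1/2}$ together with the $B^\alpha_{1,\infty}$ modulus of $u$ and $\mathbb{E}[|B_s|^\alpha]\sim s^{\alpha/2}$. The only cosmetic difference is that the paper bounds the variance via the independent-copy identity $2\tmop{Var}(X)=\mathbb{E}[|X-\tilde X|^2]$ (introducing $\tilde Z^y_t$ driven by an independent Brownian motion $\tilde B$), which directly produces the increment $u(y+\sqrt{2\nu}B_s)-u(y+\sqrt{2\nu}\tilde B_s)$ and so avoids your preliminary step of factoring out the deterministic drift $e^{-i\xi t u(y)}$ before centering.
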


\begin{proof}
  Recall the elementary identity $2 \tmop{Var} (X) =\mathbb{E} [| X -
  \tilde{X} |^2]$ for $\tilde{X}$ being an i.i.d. copy of $X$. In our setting,
  we take
  \[ \tilde{Z}^y_t = \exp \left( - i \xi \int_0^t u \left( y + \sqrt{\nu}
     \tilde{B}_s \right) \mathd s \right) g_0 \left( y + \sqrt{\nu}
     \tilde{B}_t \right) \]
  where $\tilde{B}$ is another BM independent of $B$. Therefore
  \begin{align*}
    \| g_0 \|_{L^2}^2 - \| g_t \|_{L^2}^2 & = \, \frac{1}{2} 
    \int_{\mathbb{T}} \mathbb{E} [| Z^y_t - \tilde{Z}^y_t |^2] \mathd y\\
    & \leqslant \, \mathbb{E} \left[ \int_{\mathbb{T}} \left| g_0 \left( y +
    \sqrt{\nu} B_t \right) - g_0 \left( y + \sqrt{\nu} \tilde{B}_t \right)
    \right|^2 \mathd y \right]\\
    & + \| g_0 \|^2_{L^{\infty}} \mathbb{E} \left[ \int_{\mathbb{T}} \left|
    e^{- i \xi \int_0^t u \left( y + \sqrt{\nu} B_s \right) \mathd s} - e^{- i
    \xi \int_0^t u \left( y + \sqrt{\nu} \tilde{B}_s \right) \mathd s}
    \right|^2 \mathd y \right] .
  \end{align*}
  Using the inequality $| e^{i \xi a} - e^{i \xi b} | \leqslant \sqrt{2}  |
  \xi |^{1 / 2} | b - a |^{1 / 2}$ and the characterization of Besov spaces in
  terms of finite differences (see Appendix~\ref{app:besov}), we deduce
  \begin{align*}
    \| g_0 \|_{L^2}^2 - \| g_t \|_{L^2}^2 & \lesssim \, \mathbb{E} \left[
    \left\| g_0 \left( \cdot \, + \sqrt{\nu} B_t \right) - g_0 \left( \cdot \,
    + \sqrt{\nu} B_t \right) \right\|_{L^2}^2 \right] \\
    & \quad + \| g_0 \|_{L^{\infty}}^2  | \xi | \mathbb{E} \left[
    \int_{\mathbb{T}} \int_0^t \left| u \left( y + \sqrt{\nu} B_s \right) - u
    \left( y + \sqrt{\nu} \tilde{B}_s \right) \right| \mathd s \mathd y
    \right]\\
    & \lesssim \| g_0 \|^2_{H^1} \left( \nu \mathbb{E} [| B_t - \tilde{B}_t
    |^2] + | \xi | \int_0^t \mathbb{E} \left[ \left\| u \left( \cdot \, +
    \sqrt{\nu} B_s \right) - u \left( \cdot \, + \sqrt{\nu} \tilde{B}_s
    \right) \right\|_{L^1} \right] \mathd s \right)\\
    & \lesssim \| g_0 \|^2_{H^1} \left( \nu t + \llbracket u
    \rrbracket_{B^{\alpha}_{1, \infty}} | \xi | \nu^{\frac{\alpha}{2}}
    \int_0^t \mathbb{E} [| B_s - \tilde{B}_s |^{\alpha}] \mathd s \right) ;
  \end{align*}
  computing the last expectation yields the conclusion.
\end{proof}

We are now ready to complete the

\begin{proof}[Proof of Proposition~\ref{prop:lower-bound-enhanced}]
  The proof goes along the same lines as Lemma~2 from~{\cite{zelatidrivas}}.
  We argue by contradiction. Assume there exists no such constant $C$, then it
  must hold
  \begin{equation}
    \liminf_{\nu \rightarrow 0^+} \nu^{- \frac{\alpha}{\alpha + 2}} 
   r_{\text{dif}} (\nu) = + \infty .
    \label{eq:lower-visc-proof1}
  \end{equation}
  Now take $g_0 (y) = (2 \pi)^{- 1 / 2} e^{i y}$, so that $\|
  g_0 \|_{L^2} = 1 \sim \| g_0 \|_{H^1}$; by
  Definition~\ref{def:diffusion-enhancing} and
  Lemma~\ref{lem:lower-bound-enhanced-1} applied to $\xi = 1$ we deduce that
  there exist constants $C_1, C_2 > 0$ such that, for any $\nu \leqslant 1$
  and $t \geqslant 1$, it holds
  \begin{align*}
    1 - C_1 e^{- r_{\text{dif}} (\nu) t} & \leqslant 1 - \|
    e^{t L_{1, \nu} } \|_{L^2}^2 \leqslant 1 - \| g_t \|_{L^2}^2\\
    & \leqslant C_2 \| g_0 \|^2_{H^1} \left( \nu t + \llbracket u
    \rrbracket_{B^{\alpha}_{1, \infty}} \nu^{\frac{\alpha}{2}} t^{1 +
    \frac{\alpha}{2}} \right)\\
    & \lesssim C_2 (1 + \llbracket u
    \rrbracket_{B^{\alpha}_{1, \infty}}) \nu^{\frac{\alpha}{2}} t^{1 +
    \frac{\alpha}{2}} .
  \end{align*}
  Let $\nu_n \downarrow 0$ be a sequence realizing the liminf
  in~\eqref{eq:lower-visc-proof1} and choose
  \[ t_n = \left( r_{\text{dif}} (\nu_n) \nu_n^{\alpha /
     (\alpha + 2)} \right)^{- 1 / 2} ; \]
  then we obtain
  \[ 1 - C_1 \exp \left( - \left( \nu_n^{- \frac{\alpha}{\alpha + 2}} 
     r_{\text{dif}} (\nu) \right)^{1 / 2} \right)
     \lesssim_{u} \left( \nu_n^{- \frac{\alpha}{\alpha + 2}} 
     r_{\text{dif}} (\nu) \right)^{- \frac{\alpha + 2}{4}} . \]
  Taking the limit as $n \rightarrow \infty$ on both sides we find $1
  \leqslant 0$ which is absurd.
\end{proof}

\subsection{Wei's irregularity condition}\label{sec:wei-enhanced}

A major role in the analysis of dissipation enhancement by rough shear flows
is played by the following condition, first introduced in~{\cite{wei}}.

\begin{definition}
  \label{def:wei-condition}We say that $u \in L^1 (0, T)$ satisfies Wei's
  condition with parameter $\alpha > 0$ if, setting $\psi (y) = \int_0^y u (z)
  \mathd z$, it holds
  \begin{equation}
    \Gamma_{\alpha} (u) \assign \left[ \inf_{\delta \in (0, 1), \bar{y} \in
    [0, T - \delta]} \delta^{- 2 \alpha - 3} \inf_{c_1, c_2 \in \mathbb{R}}
    \int_{\bar{y}}^{\bar{y} + \delta} | \psi (y) - c_1 - c_2 y |^2 \mathd y
    \right]^{1 / 2} > 0. \label{eq:wei-condition}
  \end{equation}
  A similar definition holds for \ $u \in L^1_{\tmop{loc}} (\mathbb{R})$; $u
  \in L^1 (\mathbb{T})$ is said to satisfy Wei's condition once it is
  identified with a $2 \pi$-periodic map on $\mathbb{R}$.
\end{definition}

\begin{remark}
  Denoting by $\mathcal{P}_1$ the set of all polynomials of degree at most
  one, for $u \in L^1_{\tmop{loc}} (\mathbb{R})$ the definition is equivalent
  to
  \[ \Gamma_{\alpha} (u) = \left( \inf_{I \subset \mathbb{R}, | I | < 1} | I
     |^{- 2 \alpha - 3} \inf_{P \in \mathcal{P}_1} \int_I | \psi (y) - P (y)
     |^2 \mathd y \right)^{1 / 2} > 0 ; \]
  this highlights its ``complementarity'' to the seminorm $\llbracket \psi
  \rrbracket_{\mathcal{L}^{2, 2 \alpha + 3}_1}$ associated to the higher order
  Campanato space $\mathcal{L}_1^{2, 2 \alpha + 3}$, as defined
  in~{\cite{campanato1964}}. Observe that $\Gamma_{\alpha}$ is homogeneous,
  i.e. $\Gamma_{\alpha} (\lambda u) = \lambda \Gamma_{\alpha} (u)$ for all
  $\lambda \geqslant 0$.
\end{remark}

The importance of condition~\eqref{eq:wei-condition} comes from the following
result.

\begin{theorem}
  \label{thm:wei}Let $u \in L^1 (\mathbb{T})$ be such that $\Gamma_{\alpha}
  (u) > 0$ for some $\alpha > 0$. Then there exist positive constants $C_1,
  C_2$, depending on $\alpha$ and $\Gamma_{\alpha} (u)$, such that
  \begin{equation}
    \| e^{t L_{k, \nu}} \|_{L^2 \rightarrow L^2} \leqslant C_1 \exp \left( -
    C_2 \nu^{\frac{\alpha}{\alpha + 2}}  | k |^{\frac{2}{\alpha + 2}} t
    \right) \quad \forall \, \nu \in (0, 1), k \in \mathbb{Z}_0, t \geqslant
    0. \label{eq:wei-conclusion}
  \end{equation}
  Namely, $u$ is diffusion enhancing with rate $r_{\tmop{dif}}
  (x) \sim x^{\alpha / (\alpha + 2)}$, in the sense of
  Definition~\ref{def:diffusion-enhancing}.
\end{theorem}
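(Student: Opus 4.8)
The plan is to derive Theorem~\ref{thm:wei} from the spectral (resolvent) approach of~\cite{wei}, isolating the Campanato-type non-degeneracy $\Gamma_\alpha(u)>0$ as the only structural input used and extending the argument to merely integrable $u$; the technical part of this extension is carried out in Appendix~\ref{app:wei-extension}. In outline: first reduce to $k=1$, then use a quantitative Gearhart--Pr\"uss theorem to convert the exponential decay into a uniform resolvent bound on the imaginary axis, and finally prove that resolvent bound by Wei's localization argument at the scale $\delta\sim\nu^{1/(\alpha+2)}$.

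First I would reduce to the case $k=1$. The operator $L_{k,\nu}=-iku+\nu\partial_y^2$ is $m$-dissipative and hence generates a strongly continuous contraction semigroup on $L^2(\mathbb{T};\mathbb{C})$; complex conjugation gives $\overline{e^{tL_{-k,\nu}}\,g_0}=e^{tL_{k,\nu}}\,\overline{g_0}$, so $\|e^{tL_{-k,\nu}}\|_{L^2\to L^2}=\|e^{tL_{k,\nu}}\|_{L^2\to L^2}$, while the time rescaling $s=|k|t$ yields $e^{tL_{k,\nu}}=e^{sL_{\mathrm{sgn}(k),\,\nu/|k|}}$, leaving $u$ (hence $\Gamma_\alpha(u)$) untouched and with $\nu/|k|\in(0,1)$. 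Since $1-\tfrac{\alpha}{\alpha+2}=\tfrac{2}{\alpha+2}$, a bound of the form $\|e^{tL_{1,\tilde\nu}}\|_{L^2\to L^2}\le C_1\exp(-C_2\tilde\nu^{\alpha/(\alpha+2)}t)$ for all $\tilde\nu\in(0,1)$, $t\ge0$, with $C_i=C_i(\alpha,\Gamma_\alpha(u))$, implies~\eqref{eq:wei-conclusion} after this change of variables, and the displayed form is in turn immediately equivalent, via Definition~\ref{def:diffusion-enhancing}, to $u$ being diffusion enhancing with rate $r_{\mathrm{dif}}(x)\sim x^{\alpha/(\alpha+2)}$. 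To obtain the $k=1$ bound, by the quantitative Gearhart--Pr\"uss theorem for contraction semigroups on Hilbert spaces it suffices to show the uniform resolvent estimate
\[ \mathcal{R}(\nu):=\sup_{\mu\in\mathbb{R}}\big\|(L_{1,\nu}-i\mu)^{-1}\big\|_{L^2\to L^2}\ \lesssim_{\alpha,\Gamma_\alpha(u)}\ \nu^{-\alpha/(\alpha+2)}, \]
which then gives $\|e^{tL_{1,\nu}}\|_{L^2\to L^2}\le C\exp(-t/\mathcal{R}(\nu))$ for an absolute constant $C$.

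The heart of the matter is this resolvent estimate, which is Wei's argument. Given $f\in L^2(\mathbb{T};\mathbb{C})$ and $\mu\in\mathbb{R}$, set $g=(L_{1,\nu}-i\mu)^{-1}f$, so that $\nu g''-i(u+\mu)g=f$ in the weak sense; pairing with $\bar g$ and separating real and imaginary parts yields the two a priori bounds $\nu\|g'\|_{L^2}^2\le\|f\|_{L^2}\|g\|_{L^2}$ and $|\langle(u+\mu)g,g\rangle_{L^2}|\le\|f\|_{L^2}\|g\|_{L^2}$, which by themselves control $g$ only trivially. To upgrade them I would run Wei's localization: at the critical length scale $\delta\sim\nu^{1/(\alpha+2)}$ partition $\mathbb{T}$ into $\sim\delta^{-1}$ intervals $I$ of length $\delta$, and on each establish a local smallness inequality for $\|g\|_{L^2(I)}$ whose gain is driven by the fact that $\psi=\int_0^\cdot u$ stays at $L^2(I)$-distance at least $\Gamma_\alpha(u)\,|I|^{\alpha+3/2}$ from every polynomial of degree $\le1$ --- a quantity unchanged by the shift $u\mapsto u+\mu$, hence uniform in $\mu$. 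The local inequality is obtained by testing the equation against a weight built from a primitive $\Psi$ of $u+\mu$ and its best affine approximant on $I$, and combining with the two global bounds; summing over the intervals and optimizing in $\delta$ produces $\|g\|_{L^2}\lesssim_{\alpha,\Gamma_\alpha(u)}\nu^{-\alpha/(\alpha+2)}\|f\|_{L^2}$, i.e.\ the bound on $\mathcal{R}(\nu)$.

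The main obstacle is precisely this resolvent estimate under the two relaxations relative to~\cite{wei}: replacing the pointwise H\"older condition used there by the integral condition $\Gamma_\alpha(u)>0$, so that every use of the oscillation of $u$ must pass through $L^2$-Campanato deficits of its primitive rather than through a modulus of continuity; and allowing $u\in L^1(\mathbb{T})$, for which the semigroup $e^{tL_{k,\nu}}$, the Feynman--Kac representation and the energy identities all require justification by smooth approximation --- the content of the first part of Appendix~\ref{app:wei-extension}. A further, routine but essential, point is to track all constants through the localization and through the Gearhart--Pr\"uss step so that the final $C_1,C_2$ depend only on $\alpha$ and $\Gamma_\alpha(u)$, as claimed.
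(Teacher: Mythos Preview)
Your overall strategy is sound and would work, but it differs from the paper's in an important respect of economy. The paper does not re-derive the resolvent estimate: it packages Wei's results into a single black-box bound (Lemma~\ref{lem:wei-summary}), valid for \emph{continuous} $u$, of the form
\[
\|e^{t(\nu\partial_y^2-iu)}\|_{L^2\to L^2}\le \exp\!\Big(\tfrac{\pi}{2}-t\,\nu\,\delta^{-2}F\big(\delta\nu^{-2}\omega_1(\delta,u)\big)^2\Big),
\]
where $\omega_1(\delta,u)=\inf_{x,c_1,c_2}\int_{x-\delta}^{x+\delta}|\psi-c_1-c_2 y|^2$ and $F$ is an explicit function; this already encodes both the Gearhart--Pr\"uss step and Wei's Lemma~4.3. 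The extension to $u\in L^1(\mathbb{T})$ is then done by an \emph{approximation-and-stability} argument rather than by reworking the resolvent estimate: take continuous $u^\varepsilon$ with $\|u-u^\varepsilon\|_{L^1}\le\varepsilon$, observe the elementary stability $\omega_1(\delta,u^\varepsilon)\ge\tfrac12\omega_1(\delta,u)-2\delta\varepsilon^2$, combine with $\omega_1(\delta,u)\gtrsim\delta^{2\alpha+3}\Gamma_\alpha(u)^2$, set $\delta=\nu^{1/(\alpha+2)}$, and finally let $\varepsilon\to0$ using strong convergence of the semigroups $e^{t(\nu\partial_y^2-iu^\varepsilon)}\to e^{t(\nu\partial_y^2-iu)}$.

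One point in your write-up is not quite right: Wei's original Theorem~5.1 already assumes precisely the Campanato-type condition $\Gamma_\alpha(u)>0$, not a pointwise H\"older hypothesis, so there is nothing to ``replace'' on that front --- the only genuine relaxation needed here is from $u\in C(\mathbb{T})$ to $u\in L^1(\mathbb{T})$. Your plan to redo the localized resolvent estimate is therefore correct but strictly more laborious than necessary; the paper's route exploits that the quantity $\omega_1(\delta,\cdot)$ is $L^1$-stable at the level of primitives, which lets one pass to the limit \emph{after} applying Wei's bound rather than inside its proof.
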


The statement comes from Theorem~5.1 from~{\cite{wei}}; therein $u$ is
required to be continuous, but this restriction is not necessary, see
Appendix~\ref{app:wei-extension} for the proof.

Following the same approach as in Section~\ref{sec:inviscid-mixing}, we
proceed to show that the condition $\Gamma_{\alpha} (u)$ implies irregularity
of $u$; we start by relating it to the property of $\alpha$-H\"{o}lder
roughness, in the sense of Definition~\ref{def:holder-roughness}.

\begin{lemma}
  Let $u \in L^1 (\mathbb{T})$ be such that $\Gamma_{\alpha} (u) > 0$ for some
  $\alpha > 0$. Then $u$ is $\alpha$\mbox{-}H\"{o}lder rough and it holds
  $L_{\alpha} (u) \geqslant \Gamma_{\alpha} (u)$.
\end{lemma}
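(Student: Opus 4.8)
The plan is to show directly that the quantitative smallness of $\psi(y) - P(y)$ on small intervals forced by $\Gamma_\alpha(u) > 0$ would be \emph{violated} if $u$ were too smooth near some point, and to extract from this a quantitative oscillation bound on $u$ itself. Fix $\bar y \in \mathbb{T}$ and $\delta \in (0,1)$; I want to produce $z$ with $d_{\mathbb{T}}(\bar y, z) \leqslant \delta$ and $|u(\bar y) - u(z)| \geqslant \Gamma_\alpha(u)\, \delta^\alpha$. Suppose instead that $|u(y) - u(\bar y)| < \Gamma_\alpha(u)\, \delta^\alpha$ for \emph{all} $y$ in the interval $I = [\bar y, \bar y + \delta]$ (say). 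Then on $I$ the function $u$ is within $\Gamma_\alpha(u)\delta^\alpha$ of the constant $c := u(\bar y)$, so $\psi(y) = \int_0^y u$ is within $\Gamma_\alpha(u)\delta^{\alpha}(y - \bar y)$ of the affine function $P(y) := \psi(\bar y) + c\,(y - \bar y)$ for $y \in I$, i.e. $|\psi(y) - P(y)| \leqslant \Gamma_\alpha(u)\, \delta^{\alpha}\,(y-\bar y) \leqslant \Gamma_\alpha(u)\, \delta^{\alpha+1}$ throughout $I$.

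Plugging this affine competitor $P \in \mathcal{P}_1$ into the infimum defining $\Gamma_\alpha(u)$ gives
\[
\delta^{-2\alpha - 3} \inf_{c_1,c_2} \int_I |\psi(y) - c_1 - c_2 y|^2\, \mathrm{d}y \;\leqslant\; \delta^{-2\alpha-3} \int_I \big(\Gamma_\alpha(u)\,\delta^{\alpha+1}\big)^2\,\mathrm{d}y \;=\; \delta^{-2\alpha-3}\,\Gamma_\alpha(u)^2\,\delta^{2\alpha+2}\cdot\delta \;=\; \Gamma_\alpha(u)^2,
\]
so taking the infimum over $\bar y$, $\delta$ and the square root we would get $\Gamma_\alpha(u) \leqslant \Gamma_\alpha(u)$ — which is not yet a contradiction, so the naive bound is just barely too weak. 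The fix is to sharpen the pointwise estimate: one should not bound $|\psi(y)-P(y)|$ by its value at the right endpoint but keep the factor $(y-\bar y)$, giving $\int_I |\psi - P|^2 \leqslant \Gamma_\alpha(u)^2 \delta^{2\alpha}\int_{\bar y}^{\bar y+\delta}(y-\bar y)^2\,\mathrm{d}y = \Gamma_\alpha(u)^2\delta^{2\alpha}\,\delta^3/3$, whence the displayed quantity is at most $\Gamma_\alpha(u)^2/3 < \Gamma_\alpha(u)^2$, contradicting the definition of the infimum. Hence the hypothesis fails and the required $z$ exists; unwinding, this is exactly $\alpha$-Hölder roughness with $L_\alpha(u) \geqslant \Gamma_\alpha(u)$.

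A couple of bookkeeping points to handle carefully. First, the definition of $\Gamma_\alpha$ restricts to $\delta \in (0,1)$, so the roughness statement should be checked for $\delta$ small; for $\delta \geqslant 1$ one uses periodicity and the fact that the constant $L_\alpha(u)$ only needs to work for all $\delta > 0$ after possibly shrinking — but in fact Definition~\ref{def:holder-roughness} already quantifies over all $\delta > 0$, so one restricts attention to the regime that matters and notes the estimate is scale-consistent. Second, there is the choice of which side of $\bar y$ to take the interval on; if $\bar y$ is too close to the "seam" one works on $\mathbb{T}$ directly using the $2\pi$-periodic extension, which is legitimate since $\Gamma_\alpha$ for $u \in L^1(\mathbb{T})$ is defined via that extension. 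The only genuinely delicate point — and the one I'd expect to be the main obstacle — is getting the constant bookkeeping tight enough that the contradiction actually closes (the factor $1/3$ above): one must resist bounding $(y-\bar y)$ by $\delta$ prematurely and instead carry the moment $\int (y-\bar y)^2\,\mathrm{d}y$ exactly. With that care the argument is short and elementary.
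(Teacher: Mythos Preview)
Your proof is correct and uses the same core idea as the paper: plug the affine competitor $P(y)=\psi(\bar y)+u(\bar y)(y-\bar y)$ into the infimum defining $\Gamma_\alpha$ and bound $|\psi(y)-P(y)|\leqslant\int_{\bar y}^y|u(z)-u(\bar y)|\,\mathrm{d}z$. The only difference is packaging: the paper argues directly, obtaining for every $(\bar y,\delta)$
\[
\Gamma_\alpha(u)^2\;\leqslant\;\delta^{-2\alpha-3}\inf_{c_1,c_2}\int_{\bar y}^{\bar y+\delta}|\psi-c_1-c_2 y|^2\,\mathrm{d}y\;\leqslant\;\Bigl(\sup_{z\in B_\delta(\bar y)}\tfrac{|u(z)-u(\bar y)|}{\delta^\alpha}\Bigr)^2,
\]
and then takes the infimum on the right to get $\Gamma_\alpha(u)\leqslant L_\alpha(u)$; no strict inequality is needed, so the crude bound $(y-\bar y)\leqslant\delta$ already suffices and your $1/3$ sharpening is an artifact of the contradiction framing rather than a genuine obstacle.
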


\begin{proof}
  Fix $\delta > 0$, $\bar{y} \in [- \pi, \pi]$; it holds
  \begin{align*}
    \inf_{c_1, c_2 \in \mathbb{R}} \int_{\bar{y}}^{\bar{y} + \delta} | \psi
    (y) - c_1 - c_2 y |^2 \mathd y & \leqslant \int_{\bar{y}}^{\bar{y} +
    \delta} | \psi (y) - \psi (\bar{y}) - \psi' (\bar{y}) (y - \bar{y}) |^2
    \mathd y\\
    & \leqslant \int_{\bar{y}}^{\bar{y} + \delta} \left( \int_{\bar{y}}^y | u
    (z) - u (\bar{y}) | \mathd z \right)^2 \mathd y\\
    & \leqslant \delta^{2 \alpha + 3}  \left( \sup_{z \in B_{\delta}
    (\bar{y})} \frac{| u (z) - u (\bar{y}) |}{\delta^{\alpha}} \right)^2 .
  \end{align*}
  As the inequality holds for all $\delta$ and $\bar{y}$, we obtain
  $\Gamma_{\alpha} (u)^2 \leqslant L_{\alpha} (u)^2$ and the conclusion.
\end{proof}

We can also relate Wei's condition to regularity in the Besov--Nikolskii
scales $B^{\alpha}_{1, \infty}$.

\begin{lemma}
  \label{lem:irr-wei}Let $u \in L^1 (\mathbb{T})$ be such that
  $\Gamma_{\alpha} (u) > 0$ for some $\alpha \in (0, 1)$. Then $u$ does not
  belong to $B^{\tilde{\alpha}}_{1, \infty}$ for any $\tilde{\alpha} > \alpha$
  and does not belong to $B^{\alpha}_{1, q}$ for any $q < \infty$.
\end{lemma}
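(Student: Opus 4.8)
The plan is to argue by contradiction, reducing everything to the single scaling quantity behind Wei's condition. Set $\psi(y)=\int_0^y u(z)\,\mathd z$ (absolutely continuous, since $u\in L^1$) and, for an interval $I=[\bar y,\bar y+\delta]\subset\mathbb{R}$, write $D_I:=\int_I\int_I|u(s)-u(r)|\,\mathd s\,\mathd r$ and $\omega(h):=\|u(\cdot+h)-u\|_{L^1(\mathbb{T})}$. Suppose $u\in B^{\tilde\alpha}_{1,\infty}(\mathbb{T})$ for some $\tilde\alpha>\alpha$; I will exhibit, for every small $\delta$, an interval $I^{\ast}$ of length $\delta$ on which $\psi$ is so close to affine that $\delta^{-2\alpha-3}\inf_{c_1,c_2}\int_{I^{\ast}}|\psi-c_1-c_2y|^2\,\mathd y\to 0$, which (since the infimum in \eqref{eq:wei-condition} is over all pairs $(\delta,\bar y)$) forces $\Gamma_\alpha(u)=0$.

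The first ingredient is an elementary chord estimate. Let $\ell$ be the affine interpolant of $\psi$ at the endpoints of $I$ and $g:=\psi-\ell$; then $g$ vanishes at both endpoints of $I$ and $g'=u-\tfrac1\delta\int_I u$, so $\|g\|_{L^\infty(I)}\le\tfrac12\|g'\|_{L^1(I)}\le\tfrac1{2\delta}\int_I\int_I|u(y)-u(r)|\,\mathd r\,\mathd y=\tfrac{D_I}{2\delta}$, whence
\[ \inf_{c_1,c_2}\int_I|\psi(y)-c_1-c_2y|^2\,\mathd y\;\le\;\int_I|g|^2\;\le\;\delta\,\|g\|_{L^\infty(I)}^2\;\le\;\frac{D_I^2}{4\delta}. \]
The second ingredient is an averaging identity: by Fubini on $\mathbb{T}$ (length $2\pi$),
\[ \int_{\mathbb{T}}D_{[\bar y,\bar y+\delta]}\,\mathd\bar y=\int_0^\delta\int_0^\delta\omega(|\sigma-\tau|)\,\mathd\sigma\,\mathd\tau\;\le\;2\delta\int_0^\delta\omega(h)\,\mathd h, \]
so there is $\bar y^{\ast}=\bar y^{\ast}(\delta)$ with $D_{I^{\ast}}\le\tfrac{\delta}{\pi}\int_0^\delta\omega(h)\,\mathd h$, where $I^{\ast}=[\bar y^{\ast},\bar y^{\ast}+\delta]$. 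This averaging step is the crucial point: it trades the worst-case size of $D_I$ for a bound that is a full power of $\delta$ smaller.

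It remains to insert the regularity of $u$. Since $u\in B^{\tilde\alpha}_{1,\infty}$ we have $\omega(h)\le\llbracket u\rrbracket_{B^{\tilde\alpha}_{1,\infty}}|h|^{\tilde\alpha}$, hence $\int_0^\delta\omega\lesssim\delta^{\tilde\alpha+1}$, $D_{I^{\ast}}\lesssim\delta^{\tilde\alpha+2}$, and by the chord estimate $\inf_{c_1,c_2}\int_{I^{\ast}}|\psi-c_1-c_2y|^2\lesssim\delta^{2\tilde\alpha+3}$; therefore $\delta^{-2\alpha-3}\inf_{c_1,c_2}\int_{I^{\ast}}|\psi-c_1-c_2y|^2\,\mathd y\lesssim\delta^{2(\tilde\alpha-\alpha)}\to0$ as $\delta\to0^+$, contradicting $\Gamma_\alpha(u)>0$. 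For the second assertion one repeats the argument with $u\in B^{\alpha}_{1,q}$, $q<\infty$: then $\int_0^\pi(h^{-\alpha}\omega(h))^q\,\tfrac{\mathd h}{h}<\infty$, so by Hölder's inequality $\int_0^\delta\omega(h)\,\mathd h\le C_{\alpha,q}\,\delta^{\alpha+1}\varepsilon(\delta)^{1/q}$ with $\varepsilon(\delta):=\int_0^\delta(h^{-\alpha}\omega(h))^q\,\tfrac{\mathd h}{h}\to0$, and the same chain yields $\delta^{-2\alpha-3}\inf_{c_1,c_2}\int_{I^{\ast}}|\psi-c_1-c_2y|^2\lesssim\varepsilon(\delta)^{2/q}\to0$, again contradicting $\Gamma_\alpha(u)>0$. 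The Fubini computation and the chord estimate are routine; the one genuinely important idea is that the two powers of $\delta$ needed to beat Wei's lower bound $\delta^{2\alpha+3}$ cannot be extracted from a single interval — a naive pointwise bound only gives $\delta^{2\tilde\alpha+1}$, which is never below $\delta^{2\alpha+3}$ in the admissible range — but must be recovered by combining location averaging of $D_I$ (one power) with the self-improving $L^\infty$-to-$L^2$ passage in the chord estimate (the other power).
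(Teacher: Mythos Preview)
Your argument is correct and tracks the paper's proof closely: both reduce the problem to the single scalar quantity $\int_0^\delta\omega(h)\,\mathd h$ with $\omega(h)=\|u(\cdot+h)-u\|_{L^1}$, and both conclude by showing that a Besov bound on $\omega$ forces $\delta^{-2\alpha-3}\inf_{c_1,c_2}\int_I|\psi-c_1-c_2y|^2\,\mathd y$ to be arbitrarily small on a suitable interval (equivalently, that $\Gamma_\alpha(u)>0$ forces $\int_0^\delta\omega(h)\,\mathd h\gtrsim\delta^{1+\alpha}$). The difference is in the choice of affine competitor: the paper takes the \emph{tangent} line $c_1+c_2 y=\psi(\bar y)+u(\bar y)(y-\bar y)$ and obtains the pointwise inequality~\eqref{eq:irr-wei-proof1} before integrating in $\bar y$, while you take the \emph{secant} line through the endpoints of $I$ and average $D_I$ over $\bar y$ via Fubini. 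Your route has a small technical advantage for $u\in L^1(\mathbb{T})$: it uses only the mean $\tfrac1\delta\int_I u$ rather than the pointwise value $u(\bar y)$, so no appeal to Lebesgue points is needed. Conversely, the paper's version yields the stronger intermediate statement~\eqref{eq:irr-wei-proof1} holding at every $\bar y$, which is of independent interest (it says Wei's condition forces local oscillation at \emph{every} basepoint, not just on average). Your H\"older step for the $B^\alpha_{1,q}$ case is equivalent to the paper's Jensen argument around~\eqref{eq:irr-wei-proof3}.
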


\begin{proof}
  For any $\bar{y} \in [- \pi, \pi]$ and $\delta > 0$ it holds
  \begin{align*}
    \delta^{2 \alpha + 3} \Gamma_{\alpha} (u)^2 & \leqslant
    \int_{\bar{y}}^{\bar{y} + \delta} \left| \int_{\bar{y}}^y [u (z) - u
    (\bar{y})] \mathd z \right|^2 \mathd y\\
    & \leqslant \int_{\bar{y}}^{\bar{y} + \delta} \left(
    \int_{\bar{y}}^{\bar{y} + \delta} | u (z) - u (\bar{y}) | \mathd z
    \right)^2 \mathd y
  \end{align*}
  thus implying that
  \begin{equation}
    \inf_{\bar{y} \in \mathbb{T}} \int_0^{\delta} | u (\bar{y} + h) - u
    (\bar{y}) | \mathd h \geqslant \delta^{1 + \alpha} \Gamma_{\alpha} (u)
    \qquad \forall \, \delta \in (0, 1) . \label{eq:irr-wei-proof1}
  \end{equation}
  Now fix $\tilde{\alpha} > \alpha$; starting from~\eqref{eq:irr-wei-proof1}
  and arguing as in the proof of
  Proposition~\ref{prop:rho-irr-besov-nikolskii} (with
  $\varepsilon$ replaced by $\delta$), one obtains
  \[ 2 \pi \Gamma_{\alpha} (u) \leqslant \delta^{\tilde{\alpha} - \alpha}
     \llbracket u \rrbracket_{B^{\tilde{\alpha}}_{1, \infty}}, \]
  which implies the first claim by letting $\delta \rightarrow 0^+$.
  Integrating~\eqref{eq:irr-wei-proof1} over $\bar{y} \in \mathbb{T}$ yields
  \begin{equation}
    \int_0^{\delta} \left\| u \left( \cdot \, + h \right) - u (\cdot)
    \right\|_{L^1} \mathd h \geqslant \delta^{1 + \alpha} \Gamma_{\alpha} (u)
    \quad \forall \, \delta \in (0, 1) ; \label{eq:irr-wei-proof2}
  \end{equation}
  now assume by contradiction that $u \in B^{\alpha}_{1, q}$ for some $q <
  \infty$, then by its equivalent characterization (see
  Appendix~\ref{app:besov}) and the uniform integrability of $h \mapsto h^{- 1
  - \alpha q} \left\| u \left( \cdot \, + h \right) - u (\cdot)
  \right\|^q_{L^1}$ it must hold
  \begin{equation}
    \lim_{\delta \rightarrow 0^+} \int_0^{\delta} \frac{\left\| u \left( \cdot
    \, + h \right) - u (\cdot) \right\|_{L^1}^q}{| h |^{1 + \alpha q}} \mathd
    h = 0. \label{eq:irr-wei-proof3}
  \end{equation}
  On the other hand, by estimate~\eqref{eq:irr-wei-proof2} and Jensen's
  inequality, it holds  
  \begin{align*}
    \int_0^{\delta} \frac{\left\| u \left( \cdot \, + h \right) - u (\cdot)
    \right\|_{L^1}^q}{| h |^{1 + \alpha q}} \mathd h & \geqslant \delta^{- 1 -
    \alpha q} \int_0^{\delta} \left\| u \left( \cdot \, + h \right) - u
    (\cdot) \right\|_{L^1}^q \mathd h\\
    & \geqslant \delta^{- q (1 + \alpha)} \left( \int_0^{\delta} \left\| u
    \left( \cdot \, + h \right) - u (\cdot) \right\|_{L^1} \mathd h
    \right)^q\\
    & \geqslant \Gamma_{\alpha} (u)^q > 0
  \end{align*} 
  uniformly in $\delta \in (0, 1)$, contradicting~\eqref{eq:irr-wei-proof3}.
\end{proof}

\begin{remark}
  It follows from Lemma~\ref{lem:irr-wei} and the construction presented
  Section 2 from~{\cite{colombozelati}} that, for any $\alpha \in \mathbb{Q}$
  as in Lemma~2.1 therein, there exists a Weierstrass-type function which
  belongs to $C^{\alpha} (\mathbb{T})$, satisfies Wei's condition with
  parameter $\alpha$ and does not belong to $B^{\alpha}_{p, q}$ for any $p \in
  [1, \infty], q \in [1, \infty)$, nor to any $B^{\tilde{\alpha}}_{p, q}$ with
  $\tilde{\alpha} > \alpha$.
\end{remark}

In light of Theorem~\ref{thm:wei}, in order to show that almost every shear
flow $u$ enhances dissipation, it will suffice to show that almost every $u$
satisfies Wei's condition. We therefore need to find sufficient conditions in
order for $\Gamma_{\alpha} (u) > 0$ to hold. We start with the following
simple fact, whose proof is almost identical to that of
Lemma~\ref{lem:techlem-rho}, which simplifies the problem by
allowing us to work with not necessarily periodic functions.

\begin{lemma}
  \label{lem:wei-techlem0}A map $u : \mathbb{T} \rightarrow \mathbb{R}$
  satisfies $\Gamma_{\alpha} (u) > 0$ if and only if the maps $u_i : [0, \pi]
  \rightarrow \mathbb{R}$ defined by $u_1 (y) = u
  (y)$, $u_2 (y) = u (-y)$ do
  so.
\end{lemma}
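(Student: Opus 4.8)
The plan is to follow the proof of Lemma~\ref{lem:techlem-rho} verbatim in structure, establishing a two-sided comparison
\[
  c_\alpha \, \min\{\Gamma_\alpha(u_1), \Gamma_\alpha(u_2)\} \;\le\; \Gamma_\alpha(u) \;\le\; \min\{\Gamma_\alpha(u_1), \Gamma_\alpha(u_2)\}
\]
for a constant $c_\alpha \in (0,1]$ depending only on $\alpha$, from which the equivalence $\Gamma_\alpha(u) > 0 \iff \Gamma_\alpha(u_1) > 0$ and $\Gamma_\alpha(u_2) > 0$ is immediate. The one feature that distinguishes this from Lemma~\ref{lem:techlem-rho} is that $\Gamma_\alpha$ is built from best affine approximations of the primitive $\psi(y) = \int_0^y u(z)\,\mathd z$ rather than from increments of $u$ itself, so one must keep track of how the class $\mathcal P_1$ of affine competitors behaves under the elementary changes of variables that arise.

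For the inequality $\Gamma_\alpha(u) \le \Gamma_\alpha(u_i)$, I would first record that, writing $\psi$ for the primitive of the $2\pi$-periodic extension of $u$, one has $\psi_{u_1}(y) = \psi(y)$ and $\psi_{u_2}(y) = -\psi(-y)$ for $y \in [0,\pi]$ (the second identity by the substitution $z \mapsto -z$). Since $\mathcal P_1$ is stable under $P \mapsto -P$ and $P(\cdot) \mapsto P(-\cdot)$, for every subinterval $I \subset [0,\pi]$ the local quantity $\inf_{P \in \mathcal P_1} \int_I |\psi_{u_i} - P|^2$ equals the corresponding quantity for $\psi$ over $I$ (resp. over the reflected interval $-I \subset [-\pi,0]$), which by definition of $\Gamma_\alpha(u)$ is $\ge |I|^{2\alpha+3} \Gamma_\alpha(u)^2$; dividing by $|I|^{2\alpha+3}$ and optimizing over $I$ yields $\Gamma_\alpha(u_i) \ge \Gamma_\alpha(u)$.

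For the reverse inequality, fix an interval $I \subset \mathbb R$ with $|I| = \delta < 1$. Using $\psi(\cdot + 2\pi) = \psi(\cdot) + \int_0^{2\pi} u$ (the extra term being a constant, hence absorbed into $\mathcal P_1$), I may assume $I \subset [-\pi, \pi+1)$. As $\delta < 1 < \pi$, the interval $I$ meets at most one of the two ``seams'' $0$ and $\pi$; splitting $I$ at that seam (or leaving it alone if it meets neither) and retaining the piece $I'$ with $|I'| \ge \delta/2$, the restriction $\psi|_{I'}$ agrees — after a reflection, a $2\pi$-shift, a sign change, and the addition of an affine function, all of which preserve $\mathcal P_1$ — with $\psi_{u_1}$ or $\psi_{u_2}$ on a subinterval of $[0,\pi]$ of length $|I'|$. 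Hence $\inf_{P}\int_I |\psi - P|^2 \ge |I'|^{2\alpha+3}\min\{\Gamma_\alpha(u_1),\Gamma_\alpha(u_2)\}^2 \ge (\delta/2)^{2\alpha+3}\min\{\Gamma_\alpha(u_1),\Gamma_\alpha(u_2)\}^2$, and dividing by $\delta^{2\alpha+3}$ and taking the infimum over $I$ gives the lower bound with $c_\alpha = 2^{-(2\alpha+3)/2}$.

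The only delicate point — and, such as it is, the main obstacle — is the clean verification that reflection about $0$, reflection about $\pi$, translation by $2\pi$, and restriction to a subinterval each map affine functions to affine functions and rescale interval lengths trivially, so that $|I|^{-2\alpha-3}\inf_{P \in \mathcal P_1}\int_I |\psi - P|^2$ transforms as claimed. Granting this, the remaining combinatorics — counting how many seams a short interval can straddle and passing to the larger half — is identical to that in the proof of Lemma~\ref{lem:techlem-rho}.
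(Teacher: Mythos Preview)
Your proposal is correct and follows exactly the approach the paper indicates: the paper does not spell out a proof but states that it is ``almost identical to that of Lemma~\ref{lem:techlem-rho}'', and you carry this out faithfully, splitting a short interval at the single seam it may cross and passing to the larger half. Your additional care in tracking how the primitive $\psi$ and the affine competitor class $\mathcal P_1$ transform under reflection, translation, and sign change is precisely the adaptation needed relative to Lemma~\ref{lem:techlem-rho}, and is handled correctly.
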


In this way, we can reduce the task to identifying sufficient conditions for
functions defined on a standard interval $[0, \pi]$. For any $\delta > 0$, we
denote by $\Delta^2_{\delta}$ the discrete Laplacian operator
$\Delta^2_{\delta} f (y) = f (y + 2 \delta) - 2 f (y + \delta) + f (y)$.

\begin{lemma}
  \label{lem:wei-techlem-1}For any $\alpha > 0$ and any $(\bar{y}, \delta)$ it
  holds
  \begin{equation}
      \delta^{- 2 \alpha - 3} \inf_{c_1, c_2} \int_{\bar{y}}^{\bar{y} + 3
      \delta} | \psi (y) - c_1 - c_2 y |^2 \mathd y
      \geqslant \frac{1}{12}  \left( \int_{\bar{y}}^{\bar{y} +
      \delta} | \Delta_{\delta}^2 \psi (y) |^{- \frac{1}{1 + \alpha}} \mathd y
      \right)^{- 2 (1 + \alpha)}\label{eq:wei-basic-ineq}
  \end{equation}
  
\end{lemma}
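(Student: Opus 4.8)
The plan is to obtain the lower bound on the Campanato-type quantity on the left of \eqref{eq:wei-basic-ineq} by testing against a single, cleverly chosen linear functional that annihilates all affine functions but detects the second discrete difference $\Delta^2_\delta\psi$. Concretely, the key observation is that for any $c_1,c_2\in\mathbb{R}$ and any $y$, the second difference $\Delta^2_\delta(\psi-c_1-c_2\,\cdot\,)(y)=\Delta^2_\delta\psi(y)$, since the discrete Laplacian kills affine functions; this is what makes the estimate independent of $c_1,c_2$.

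First I would fix $(\bar y,\delta)$, write $\phi(y):=\psi(y)-c_1-c_2y$ for arbitrary constants, and note that for $y\in[\bar y,\bar y+\delta]$ the three points $y,\,y+\delta,\,y+2\delta$ all lie in $[\bar y,\bar y+3\delta]$, with $\Delta^2_\delta\phi(y)=\phi(y+2\delta)-2\phi(y+\delta)+\phi(y)=\Delta^2_\delta\psi(y)$. Hence, by the elementary inequality $|a+b+c|^2\le 3(|a|^2+|b|^2+|c|^2)$,
\[
\int_{\bar y}^{\bar y+\delta}|\Delta^2_\delta\psi(y)|^2\,\mathd y
\;\le\; 3\int_{\bar y}^{\bar y+\delta}\bigl(|\phi(y+2\delta)|^2+4|\phi(y+\delta)|^2+|\phi(y)|^2\bigr)\,\mathd y
\;\le\; 18\int_{\bar y}^{\bar y+3\delta}|\phi(y)|^2\,\mathd y.
\]
Wait — the constant there would give $\tfrac{1}{18}$, not $\tfrac{1}{12}$; in the write-up I would be slightly more careful, using $|a-2b+c|^2\le (1+2+1)(|a|^2+2|b|^2+|c|^2)$ from Cauchy–Schwarz to get the cleaner factor, or simply split $[\bar y,\bar y+3\delta]$ into the three shifted copies of $[\bar y,\bar y+\delta]$ and bound each occurrence of $\phi$ by the full integral, tracking the multiplicities to land on $\tfrac{1}{12}$. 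Since the bound holds for every $c_1,c_2$, we may pass to the infimum on the right, obtaining
\[
\inf_{c_1,c_2}\int_{\bar y}^{\bar y+3\delta}|\psi(y)-c_1-c_2y|^2\,\mathd y\;\ge\;\frac{1}{12}\int_{\bar y}^{\bar y+\delta}|\Delta^2_\delta\psi(y)|^2\,\mathd y.
\]

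The remaining step is to convert the $L^2$-integral of $|\Delta^2_\delta\psi|$ into the negative-exponent integral appearing in \eqref{eq:wei-basic-ineq}. This is a reverse Hölder / Jensen manoeuvre: for a nonnegative measurable function $g$ on an interval $J$ of length $\delta$ and any exponent $\theta>0$, one has, writing $g=g^{2\cdot\frac{\theta}{2\theta+2}}\cdot g^{-\frac{\theta}{\theta+1}\cdot(-1)}$ — more transparently, applying Jensen's inequality to the convex function $t\mapsto t^{-1/(1+\alpha)}$ with respect to the measure $g^2\,\mathd y$ suitably normalized, or directly Hölder with exponents chosen so that $g$ drops out — gives
\[
\Bigl(\int_J g^{-\frac{1}{1+\alpha}}\Bigr)^{-(1+\alpha)}\cdot\delta^{\,2\alpha+3}\;\le\;\int_J g^2,
\]
after a short computation balancing powers of $\delta$. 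Combining this (with $J=[\bar y,\bar y+\delta]$, $g=|\Delta^2_\delta\psi|$) with the previous display yields exactly \eqref{eq:wei-basic-ineq}. The main obstacle, and the only place requiring genuine care, is getting the Hölder/Jensen step's exponents and the power of $\delta$ to match up correctly: the left side of \eqref{eq:wei-basic-ineq} carries the normalizing factor $\delta^{-2\alpha-3}$, so the interpolation inequality must produce precisely $\delta^{2\alpha+3}$ and no stray factor — this is a dimensional bookkeeping check, but it is the crux, and I would verify it by testing on $g\equiv\text{const}$ where both sides must agree up to the stated constant.
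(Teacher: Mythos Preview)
Your approach is essentially the same as the paper's: first bound $\int_{\bar y}^{\bar y+3\delta}|\phi|^2$ from below by $\tfrac{1}{12}\int_{\bar y}^{\bar y+\delta}|\Delta^2_\delta\psi|^2$ using that $\Delta^2_\delta$ annihilates affine functions, then convert the $L^2$ integral into the negative-exponent integral via Jensen. The only clarification needed is in the Jensen step: the clean way (and the paper's) is to apply Jensen for the convex function $x\mapsto x^{-1/(2(1+\alpha))}$ to $|\Delta^2_\delta\psi|^2$ with respect to the \emph{normalized Lebesgue measure} $\delta^{-1}\,\mathd y$ on $[\bar y,\bar y+\delta]$, which yields $\delta^{-1}\int|\Delta^2_\delta\psi|^2\ge\bigl(\delta^{-1}\int|\Delta^2_\delta\psi|^{-1/(1+\alpha)}\bigr)^{-2(1+\alpha)}$ and produces exactly the factor $\delta^{2\alpha+3}$ you were looking for.
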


\begin{proof}
  First observe that $\Delta^2_{\delta} (c_1 + c_2 y) \equiv 0$ for any $c_1$,
  $c_2$ and that for any $f$ it holds
  \[ \int_{\bar{y}}^{\bar{y} + 3 \delta} | f (y) |^2 \mathd y \geqslant
     \frac{1}{12} \int_{\bar{y}}^{\bar{y} + \delta} | \Delta^2_{\delta} f (y)
     |^2 \mathd y. \]
  Next, applying Jensen inequality for $g (x) = x^{- \frac{1}{2 (1 +
  \alpha)}}$, which is convex on $(0, \infty)$, it holds
  \[ \left( \frac{1}{\delta} \int_{\bar{y}}^{\bar{y} + \delta} |
     \Delta^2_{\delta} f (y) |^2 \mathd y \right)^{- \frac{1}{2 (1 + \alpha)}}
     \leqslant \frac{1}{\delta} \int_{\bar{y}}^{\bar{y} + \delta} |
     \Delta^2_{\delta} f (y) |^{- \frac{1}{1 + \alpha}} \mathd y. \]
  Algebraic manipulations of the second inequality and the choice $f (y) =
  \psi (y) - c_1 - c_2 y$ yield~\eqref{eq:wei-basic-ineq}.
\end{proof}

In view of Lemma~\ref{lem:wei-techlem-1}, given $\alpha > 0$ and an integrable
$u : [0, \pi] \rightarrow \mathbb{R}$, we define
\begin{equation}
  G_{\alpha} (\bar{y}, \delta) : = \int_{\bar{y}}^{\bar{y} +
  \delta} | \Delta^2_{\delta} \psi (y) |^{- \frac{1}{1 + \alpha}} \mathd y,
  \label{eq:def-G}
\end{equation}
where $\psi$ is defined as usual by $\psi (y) = \int_0^y u (z)
\mathd z$.

\begin{lemma}
  \label{lem:wei-techlem-2}For any $\alpha \in (0, 1)$ and $\varepsilon > 0$,
  define $\beta : = \alpha + \varepsilon (1 + \alpha)$ and
  \[ K_{\alpha, \varepsilon} (u) : = \sup_{n \in \mathbb{N}, 1 \leqslant k
     \leqslant 2^n - 1} 2^{- n \varepsilon} G_{\alpha} (\pi k 2^{- n}, \pi
     2^{- n - 1}) . \]
  Then there exists a constant $C = C (\alpha, \varepsilon)$ such that
  \[ \Gamma_{\beta} (u) \geqslant C (K_{\alpha, \varepsilon} (u))^{- 1 -
     \alpha} . \]
\end{lemma}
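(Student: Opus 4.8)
The plan is to bound the Campanato‑type quantity defining $\Gamma_\beta(u)$ over an \emph{arbitrary} interval by restricting to a carefully chosen dyadic subinterval, on which Lemma~\ref{lem:wei-techlem-1} and the definition of $K_{\alpha,\varepsilon}(u)$ apply directly. Working on $[0,\pi]$ with $\psi(y)=\int_0^y u$, I would fix $\bar y$ and $\delta\in(0,1)$ with $[\bar y,\bar y+\delta]\subseteq[0,\pi]$, and let $n$ be the smallest integer with $\tfrac{5}{2}\pi\,2^{-n}\leqslant\delta$; then necessarily $n\geqslant 3$ (as $\delta<1$), and by minimality $\delta<5\pi\,2^{-n}$, so in particular $2^{-n}>\delta/(5\pi)$. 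The interval $[\bar y,\,\bar y+\delta-\tfrac{3}{2}\pi 2^{-n}]$ has length at least $\pi 2^{-n}$ and lies in $[0,\pi-\tfrac{3}{2}\pi 2^{-n}]$, hence contains a dyadic point $\bar y_0=\pi k 2^{-n}$ with $1\leqslant k\leqslant 2^n-1$; by construction the interval $J:=[\bar y_0,\bar y_0+3\pi 2^{-n-1}]$ satisfies $J\subseteq[\bar y,\bar y+\delta]\subseteq[0,\pi]$.

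Since $|\psi(y)-c_1-c_2 y|^2\geqslant 0$ and $J$ is a subinterval of $[\bar y,\bar y+\delta]$, for every $c_1,c_2\in\mathbb{R}$ the integral over $[\bar y,\bar y+\delta]$ dominates that over $J$; taking infima over $c_1,c_2$ and then applying Lemma~\ref{lem:wei-techlem-1} with centre $\bar y_0$ and scale $\delta_0:=\pi 2^{-n-1}$ (so that $\bar y_0+3\delta_0$ is the right endpoint of $J$) gives
\[
  \inf_{c_1,c_2}\int_{\bar y}^{\bar y+\delta}|\psi(y)-c_1-c_2 y|^2\,\mathd y\;\geqslant\;\tfrac{1}{12}\,\delta_0^{2\alpha+3}\,G_\alpha(\bar y_0,\delta_0)^{-2(1+\alpha)}.
\]
By the very definition of $K_{\alpha,\varepsilon}(u)$, evaluated at $(n,k)$, one has $G_\alpha(\bar y_0,\delta_0)\leqslant 2^{n\varepsilon}K_{\alpha,\varepsilon}(u)$, and since $-2(1+\alpha)<0$ this becomes $G_\alpha(\bar y_0,\delta_0)^{-2(1+\alpha)}\geqslant 2^{-2n\varepsilon(1+\alpha)}K_{\alpha,\varepsilon}(u)^{-2(1+\alpha)}$.

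Combining these, substituting $\delta_0=\pi 2^{-n-1}$, and using the identity $(2\alpha+3)+2\varepsilon(1+\alpha)=2\beta+3$, I would obtain
\[
  \inf_{c_1,c_2}\int_{\bar y}^{\bar y+\delta}|\psi(y)-c_1-c_2 y|^2\,\mathd y\;\geqslant\;c_\alpha\,2^{-n(2\beta+3)}\,K_{\alpha,\varepsilon}(u)^{-2(1+\alpha)},\qquad c_\alpha:=\tfrac{1}{12}(\pi/2)^{2\alpha+3}.
\]
Finally $2^{-n}>\delta/(5\pi)$ gives $2^{-n(2\beta+3)}\geqslant(5\pi)^{-(2\beta+3)}\delta^{2\beta+3}$; dividing by $\delta^{2\beta+3}$, taking the infimum over all admissible $(\bar y,\delta)$, and then a square root yields $\Gamma_\beta(u)\geqslant C(\alpha,\varepsilon)\,K_{\alpha,\varepsilon}(u)^{-(1+\alpha)}$ with $C(\alpha,\varepsilon)=c_\alpha^{1/2}(5\pi)^{-(2\beta+3)/2}$, which is the assertion.

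The step I expect to require the most care is the dyadic selection above: one must pick $n$ so that a \emph{tripled} dyadic interval of generation $n$ fits inside an arbitrary interval of length $\delta$, while simultaneously keeping $2^{-n}$ comparable to $\delta$ (so that the normalizing factor $\delta^{-2\beta-3}$ gets absorbed) and respecting the index range $1\leqslant k\leqslant 2^n-1$ present in the definition of $K_{\alpha,\varepsilon}$. Once this bookkeeping is in place the rest is a mechanical composition of Lemma~\ref{lem:wei-techlem-1} with the definition of $K_{\alpha,\varepsilon}(u)$, the exponent arithmetic being routine; note in particular that no boundary issue arises, since every interval used is contained in $[\bar y,\bar y+\delta]\subseteq[0,\pi]$.
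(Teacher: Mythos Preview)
Your argument is correct and follows essentially the same route as the paper: fit a dyadic tripled interval $[\bar y_0,\bar y_0+3\delta_0]$ with $\delta_0=\pi 2^{-n-1}$ inside the given interval, apply Lemma~\ref{lem:wei-techlem-1}, and conclude via the definition of $K_{\alpha,\varepsilon}$ together with the exponent identity $2\alpha+3+2\varepsilon(1+\alpha)=2\beta+3$. The only cosmetic difference is that the paper first passes to an equivalent formulation of $\Gamma_\beta$ in terms of intervals of length $3\delta$, which lets it use a coarser dyadic selection ($\delta\in(\pi 2^{-n},\pi 2^{-n+1}]$, $\bar y\in[\pi(k-1)2^{-n},\pi k 2^{-n}]$), whereas you work directly with the original $\delta$-intervals and pay for this with the slightly more careful choice $\tfrac{5}{2}\pi 2^{-n}\leqslant\delta$; both lead to the same estimate with constants depending only on $(\alpha,\varepsilon)$.
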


\begin{proof}
  First observe that, for any $\beta \in (0, 1),$
  \[ | \Gamma_{\beta} (u) |^2 \sim_{\beta} \inf_{\delta \in (0, 1 / 3),
     \bar{y} \in [0, 1 - 3 \delta]} \delta^{- 2 \beta - 3} \inf_{c_1, c_2 \in
     \mathbb{R}} \int_{\bar{y}}^{\bar{y} + 3 \delta} | \psi (y) - c_1 - c_2 y
     |^2 \mathd y \]
  so to conclude it suffices to provide a lower bound on the latter for our
  choice of $\beta$. Fix $(\bar{y}, \delta)$ and choose $n \in \mathbb{N}$ and
  $k \in \{ 1, \ldots, 2^n - 1 \}$ such that
  \[ \delta \in (\pi 2^{- n}, \pi 2^{- n + 1}], \quad \bar{y} \in [\pi (k - 1)
     2^{- n}, \pi k 2^{- n}] \]
  so that $[\bar{y}, \bar{y} + 3 \delta] \supseteq [\tilde{y}, \tilde{y} + 3
  \tilde{\delta}]$ for the choice $\tilde{y} = \pi k 2^{- n}$, $\tilde{\delta}
  = \pi 2^{- n - 1}$. As a consequence,
  \[ \begin{array}{l}
       \delta^{- 2 \beta - 3} \inf_{c_1, c_2 \in \mathbb{R}}
       \int_{\bar{y}}^{\bar{y} + 3 \delta} | \psi (y) - c_1 - c_2 y |^2 \mathd
       y\\
       \qquad \gtrsim_{\beta} \tilde{\delta}^{- 2 \beta - 3} \inf_{c_1, c_2
       \in \mathbb{R}} \int_{\tilde{y}}^{\tilde{y} + 3 \tilde{\delta}} | \psi
       (y) - c_1 - c_2 y |^2 \mathd y\\
       \qquad \gtrsim \, \tilde{\delta}^{- 2 (\beta - \alpha)}  \left(
       \int_{\tilde{y}}^{\tilde{y} + \tilde{\delta}} | \Delta_{\delta}^2 \psi
       (y) |^{- \frac{1}{1 + \alpha}} \mathd y \right)^{- 2 (1 + \alpha)}\\
       \qquad = (\tilde{\delta}^{\varepsilon} G_{\alpha} (\tilde{y},
       \tilde{\delta}))^{- 2 (1 + \alpha)}
     \end{array} \]
  where in the second passage we employed inequality~\eqref{eq:wei-basic-ineq}
  and then the definition of $\beta$. Overall we deduce by the definition of
  $K$ and the choice of $(\tilde{y}, \tilde{\delta})$ that
  \[ \delta^{- 2 \beta - 3} \inf_{c_1, c_2 \in \mathbb{R}}
     \int_{\bar{y}}^{\bar{y} + 3 \delta} | \psi (z) - c_1 - c_2 z |^2 \mathd z
     \gtrsim_{\beta} K_{\alpha, \varepsilon} (u)^{- 2 (1 + \alpha)} ; \]
  taking the infimum over $(\delta, y)$ then yields the conclusion.
\end{proof}

\subsection{Sufficient conditions for stochastic
processes}\label{sec:enhanced-stochastic}

In order to establish prevalence statements, we want to run the
same programme as in Section~\ref{sec:prevalence-mixing}, exploiting the
properties of LND Gaussian processes and their fundamental translation
invariance from Lemma~\ref{lem:LND-translation}. In order for this strategy to
work, we need an equivalent of
Proposition~\ref{thm:prevalence-rho-irregularity}; this is precisely the aim
of this section, cf. Corollary~\ref{cor:gauss-wei} below. Its proof requires a
few preparations; we start with the following intermediate, general result.

\begin{proposition}
  \label{prop:sufficient-stochastic}Let $u : [0, \pi] \rightarrow \mathbb{R}$
  be an integrable stochastic process, $\psi = \int_0^{\cdot} u_s \mathd s$
  and suppose that there exist $\lambda, \kappa > 0$, $\alpha \in (0, 1)$ such
  that
  \[ \sup_{\delta \in (0, 1), \bar{y} \in [0, \pi - \delta]} \mathbb{E} [\exp
     (\lambda G_{\alpha} (\bar{y}, \delta))] \leqslant \kappa \]
  for $G$ as defined in~\eqref{eq:def-G}. Then for any $\beta > \alpha$ it
  holds $\mathbb{P} (\Gamma_{\beta} (u) > 0) = 1$.
\end{proposition}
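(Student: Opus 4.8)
The plan is to combine the exponential moment hypothesis with a Borel--Cantelli / union-bound argument over the dyadic grid, and then invoke Lemma~\ref{lem:wei-techlem-2} to conclude. Fix $\varepsilon > 0$ small enough that $\beta \geqslant \alpha + \varepsilon(1+\alpha)$, and set $\beta' := \alpha + \varepsilon(1+\alpha)$; since $\Gamma_{\beta}(u) \geqslant \Gamma_{\beta'}(u)$ whenever $\beta \geqslant \beta'$ (larger exponent makes the normalising factor $\delta^{-2\beta-3}$ smaller on $\delta \in (0,1)$), it suffices to prove $\mathbb{P}(\Gamma_{\beta'}(u) > 0) = 1$. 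By Lemma~\ref{lem:wei-techlem-2}, this will follow once we show that the random variable
\[
K_{\alpha,\varepsilon}(u) = \sup_{n \in \mathbb{N},\, 1 \leqslant k \leqslant 2^n - 1} 2^{-n\varepsilon}\, G_{\alpha}(\pi k 2^{-n}, \pi 2^{-n-1})
\]
is $\mathbb{P}$-a.s.\ finite.

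First I would estimate, for each fixed pair $(n,k)$, the tail of $G_{\alpha}(\pi k 2^{-n}, \pi 2^{-n-1})$. By Markov's inequality applied to the exponential and the hypothesis $\sup \mathbb{E}[\exp(\lambda G_{\alpha}(\bar{y},\delta))] \leqslant \kappa$, for every $R > 0$ we get
\[
\mathbb{P}\big(G_{\alpha}(\pi k 2^{-n}, \pi 2^{-n-1}) > R\big) \leqslant \kappa\, e^{-\lambda R}.
\]
Now apply this with $R = R_n := (\varepsilon \log 2 / \lambda)\, n + M$ for a constant $M$ to be chosen, so that $e^{-\lambda R_n} = 2^{-n\varepsilon} e^{-\lambda M}$. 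Then
\[
\mathbb{P}\Big(2^{-n\varepsilon} G_{\alpha}(\pi k 2^{-n}, \pi 2^{-n-1}) > 2^{-n\varepsilon} R_n\Big) \leqslant \kappa\, 2^{-n\varepsilon} e^{-\lambda M}.
\]
Summing over the at most $2^n$ admissible values of $k$ and then over $n$, the union bound gives
\[
\sum_{n=1}^{\infty} \sum_{k=1}^{2^n - 1} \kappa\, 2^{-n\varepsilon} e^{-\lambda M} \leqslant \kappa\, e^{-\lambda M} \sum_{n=1}^{\infty} 2^{n(1-\varepsilon)},
\]
which \emph{diverges}. So a crude union bound over all dyadic scales is too lossy, and this is the main obstacle: the number $2^n$ of intervals at scale $n$ outpaces the decay $2^{-n\varepsilon}$ coming from the weight.

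To fix this, the decay in $R_n$ must be made to beat the entropy factor $2^n$, which forces a larger threshold: take $R_n := (c/\lambda)\, n$ with $c > (1+\varepsilon)\log 2$, equivalently $R_n \sim C' n$ for a suitable constant. Then $\sum_n 2^n \kappa e^{-\lambda R_n} = \kappa \sum_n 2^n 2^{-cn/\log 2} < \infty$, and by Borel--Cantelli, $\mathbb{P}$-a.s.\ there exists $n_0(\omega)$ such that $G_{\alpha}(\pi k 2^{-n}, \pi 2^{-n-1}) \leqslant R_n \lesssim n$ for all $n \geqslant n_0$ and all $1 \leqslant k \leqslant 2^n - 1$. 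Consequently $2^{-n\varepsilon} G_{\alpha}(\pi k 2^{-n}, \pi 2^{-n-1}) \lesssim 2^{-n\varepsilon} n$, which is bounded uniformly in $n \geqslant n_0$; combined with the finitely many terms $n < n_0$ (each a.s.\ finite since $G_{\alpha}$ is a.s.\ finite by the exponential moment bound), this yields $K_{\alpha,\varepsilon}(u) < \infty$ $\mathbb{P}$-a.s. Feeding this into Lemma~\ref{lem:wei-techlem-2} gives $\Gamma_{\beta'}(u) \geqslant C\, K_{\alpha,\varepsilon}(u)^{-1-\alpha} > 0$ a.s., hence $\Gamma_{\beta}(u) > 0$ a.s., completing the proof. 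The only subtlety to double-check is that the single-term tail bound requires $\lambda$ fixed (independent of $n,k$), which is exactly what the hypothesis provides uniformly over $(\bar{y},\delta)$; the choice of $\varepsilon$ at the outset is free, so there is no circularity.
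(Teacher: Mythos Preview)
Your proof is correct and follows essentially the same route as the paper: reduce via Lemma~\ref{lem:wei-techlem-2} to $K_{\alpha,\varepsilon}(u)<\infty$ a.s., then use the uniform exponential moment to obtain $G_{\alpha}(\pi k2^{-n},\pi 2^{-n-1})\lesssim n$ on the dyadic grid; the paper packages this last step as $\mathbb{E}\big[\sum_n 2^{-2n}\sum_k e^{\lambda G_\alpha}\big]<\infty$ followed by a logarithm, which is equivalent to your Markov/Borel--Cantelli argument. One cosmetic slip: in your parenthetical, for $\delta\in(0,1)$ a larger $\beta$ makes $\delta^{-2\beta-3}$ \emph{larger}, not smaller, and that is precisely why $\Gamma_\beta\geqslant\Gamma_{\beta'}$; the inequality you use is correct, only the stated reason is reversed.
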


\begin{proof}
  By virtue of Lemma~\ref{lem:wei-techlem-2}, for $\beta = \alpha +
  \varepsilon (1 + \alpha)$ it holds
  \[ \mathbb{P} (\Gamma_{\beta} (u) > 0) \geqslant \mathbb{P} (K_{\alpha,
     \varepsilon} (u) < \infty), \]
  so to conclude it suffices to show that $\mathbb{P} (K_{\alpha, \varepsilon}
  (u) < \infty) = 1$ for all $\varepsilon > 0$. Given $\lambda$ as in the
  hypothesis, define the random variable
  \[ J : = \sum_{n \in \mathbb{N}} 2^{- 2 n} \sum_{k = 1}^{2^n - 1} \exp
     (\lambda G_{\alpha} (\pi k 2^{- n}, \pi 2^{- n - 1})) . \]
  By assumption $\mathbb{E} [J] < \infty$, so that $\mathbb{P} (J < \infty) =
  1$. For any $n, k$ it holds
  \[ G_{\alpha} (\pi k 2^{- n}, \pi 2^{- n - 1}) \leqslant \frac{1}{\lambda}
     \log (2^{2 n} J) \lesssim \frac{n}{\lambda}  (1 + \log J) \]
  which implies that
  \[ Y \assign \sup_{n \in \mathbb{N}, 1 \leqslant k \leqslant 2^{- n} - 1}
     \frac{1}{n} G_{\alpha} (\pi k 2^{- n}, \pi 2^{- n - 1}) \lesssim
     \frac{1}{\lambda} (1 + \log J) < \infty \quad \mathbb{P} \text{-a.s.} \]
  Finally, for any $\varepsilon > 0$ it holds $K_{\alpha, \varepsilon} (u)
  \lesssim_{\varepsilon} Y$, which yields the conclusion.
\end{proof}

In order to apply Proposition~\ref{prop:sufficient-stochastic}
to suitable LND Gaussian processes, we will need the three
Lemmas~\ref{lem:gauss-lemma1}-\ref{lem:gauss-lemma3} below.

The next elementary lemma often appears in the probabilistic literature in
connection to so called Krylov or Khasminskii type of estimates, see Lemma~1.1
from~{\cite{portenko1990}} for a slightly more general statement. For the sake
of completeness, we give the proof.

\begin{lemma}
  \label{lem:gauss-lemma1}Let $X$ be a real valued, nonnegative stochastic
  process, defined on an interval $[t_1, t_2]$, adapted to a filtration $\{
  \mathcal{F}_s \}_{s \in [t_1, t_2]}$; suppose there exists a deterministic
  $C > 0$ such that
  \[ \tmop{ess} \sup_{\omega \in \Omega} \mathbb{E} \left[ \int_s^t X_r |
     \mathcal{F}_s \right] \leqslant C \quad \forall \, s \in [t_1, t_2] . \]
  Then for any $\lambda \in (0, 1)$ it holds
  \[ \mathbb{E} \left[ \exp \left( \frac{\lambda}{C} \int_{t_1}^{t_2} X_r
     \mathd r \right) \right] \leqslant (1 - \lambda)^{- 1} . \]
\end{lemma}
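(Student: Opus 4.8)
The statement is a standard Khasminskii-type exponential integrability bound, and the plan is to prove it by expanding the exponential into a power series and estimating each term by iterating the hypothesis. First I would write
\[
\mathbb{E}\left[\exp\left(\frac{\lambda}{C}\int_{t_1}^{t_2} X_r\,\mathrm{d}r\right)\right]
= \sum_{n\geqslant 0} \frac{\lambda^n}{n!\,C^n}\,\mathbb{E}\left[\left(\int_{t_1}^{t_2} X_r\,\mathrm{d}r\right)^n\right],
\]
which is legitimate for nonnegative $X$ by monotone convergence (the partial sums increase). Thus the whole problem reduces to the moment bound
\[
\mathbb{E}\left[\left(\int_{t_1}^{t_2} X_r\,\mathrm{d}r\right)^n\right]\leqslant n!\,C^n\qquad\text{for all }n\geqslant 0,
\]
after which summing the geometric series $\sum_n \lambda^n = (1-\lambda)^{-1}$ gives the claim.

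The moment bound is proved by induction on $n$, the case $n=0$ being trivial. For the inductive step I would symmetrize the $n$-fold integral over the simplex: writing $A := \int_{t_1}^{t_2} X_r\,\mathrm{d}r$, one has
\[
\mathbb{E}[A^n] = n!\,\mathbb{E}\left[\int_{t_1\leqslant r_1\leqslant\cdots\leqslant r_n\leqslant t_2} X_{r_1}\cdots X_{r_n}\,\mathrm{d}r_1\cdots\mathrm{d}r_n\right].
\]
Then I would peel off the innermost integration variable $r_1$ (the smallest), condition on $\mathcal{F}_{r_1}$, and use the tower property together with the hypothesis in the form $\mathbb{E}\big[\int_{r_1}^{t_2} X_{r_2}\cdots\,|\,\mathcal{F}_{r_1}\big] \leqslant C\cdot\mathbb{E}[(\cdots)^{n-1}\text{-type quantity}]$. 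More precisely, fixing $r_1$ and applying the inductive hypothesis to the process $(X_r)_{r\in[r_1,t_2]}$ adapted to $(\mathcal{F}_r)_{r\geqslant r_1}$ — which still satisfies the hypothesis with the same constant $C$ — yields $\mathbb{E}\big[\big(\int_{r_1}^{t_2}X_r\,\mathrm{d}r\big)^{n-1}\,\big|\,\mathcal{F}_{r_1}\big]\leqslant (n-1)!\,C^{n-1}$; integrating $X_{r_1}$ against this and using $\mathbb{E}[\int_{t_1}^{t_2}X_{r_1}\,\mathrm{d}r_1\,|\,\mathcal{F}_{t_1}]\leqslant C$ closes the induction with the constant $n!\,C^n$.

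The main technical point to handle carefully is the measurability and the conditioning in the symmetrized simplex integral: one needs the integrand $X_{r_1}\cdots X_{r_n}$ restricted to $r_1\leqslant\cdots\leqslant r_n$ to be jointly measurable and the successive conditional expectations to be taken in the right order (innermost variable first), so that at each stage the ``remaining'' process is adapted to a shifted filtration and the hypothesis applies verbatim. A clean way to organize this, avoiding iterated-simplex bookkeeping, is to prove the equivalent recursive estimate $\mathbb{E}\big[\big(\int_s^{t_2}X_r\,\mathrm{d}r\big)^n\,\big|\,\mathcal{F}_s\big]\leqslant n!\,C^n$ for all $s$ and all $n$ simultaneously by induction on $n$, differentiating (or rather using the fundamental theorem of calculus in $s$) to reduce the $n$-th estimate to the $(n-1)$-th; this is the only place where a little care is required, and everything else is routine.
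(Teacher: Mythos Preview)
Your proposal is correct and follows essentially the same Khasminskii-type argument as the paper: expand the exponential, write the $n$-th moment as an integral over the ordered simplex $t_1\leqslant r_1\leqslant\cdots\leqslant r_n\leqslant t_2$, and use iterated conditioning together with the hypothesis to bound it by $n!\,C^n$.

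The one difference worth noting is the direction in which you peel. You peel off the \emph{smallest} variable $r_1$, which forces you to strengthen the inductive hypothesis to the conditional form $\mathbb{E}\big[(\int_s^{t_2}X_r\,\mathrm{d}r)^{n-1}\,\big|\,\mathcal{F}_s\big]\leqslant (n-1)!\,C^{n-1}$ for all $s$. The paper instead peels off the \emph{largest} variable $r_n$: conditioning on $\mathcal{F}_{r_{n-1}}$, the factors $X_{r_1}\cdots X_{r_{n-1}}$ are already measurable, so one applies the hypothesis once to get $\mathbb{E}\big[\int_{r_{n-1}}^{t_2}X_{r_n}\,\mathrm{d}r_n\,\big|\,\mathcal{F}_{r_{n-1}}\big]\leqslant C$ and obtains $I_n\leqslant C\,I_{n-1}$ directly, without ever needing a conditional inductive statement. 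Both routes are valid; the paper's is marginally cleaner bookkeeping.
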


\begin{proof}
  Up to rescaling $X$, we may assume $C = 1$. It holds
  \[ \mathbb{E} \left[ \exp \left( \lambda \int_{t_1}^{t_2} X_r \mathd r
     \right) \right] = \sum_{n = 0}^{\infty} \frac{\lambda^n}{n!} \mathbb{E}
     \left[ \left( \int_{t_1}^{t_2} X_r \mathd r \right)^n \right] = \sum_{n =
     0}^{\infty} \lambda^n I_n \]
  where
  \[ I_n =\mathbb{E} \left[ \int_{t_1 < r_1 < \ldots < r_n < t_2} X_{r_1}
     \cdot \ldots \cdot X_{r_n} \mathd r_1 \ldots \mathd r_n \right] . \]
  By the assumptions and the non-negativity of $X$, it holds
  \begin{align*}
    I_n & = \int_{t_1 < r_1 < \ldots < r_{n - 1} < t_2} \mathbb{E} \left[
    X_{r_1} \cdot \ldots \cdot X_{r_{n - 1}} \int_{r_{n - 1}}^t X_{r_n} \mathd
    r_n \right] \mathd r_1 \ldots \mathd r_{n - 1}\\
    & = \int_{t_1 < r_1 < \ldots < r_{n - 1} < t_2} \mathbb{E} \left[ X_{r_1}
    \cdot \ldots \cdot X_{r_{n - 1}} \mathbb{E} \left[ \int_{r_{n - 1}}^t
    X_{r_n} \mathd r_n | \mathcal{F}_{r_{n - 1}} \right] \right] \mathd r_1
    \ldots \mathd r_{n - 1}\\
    & \leqslant \int_{t_1 < r_1 < \ldots < r_{n - 1} < t_2} \mathbb{E}
    [X_{r_1} \cdot \ldots \cdot X_{r_{n - 1}}] \mathd r_1 \ldots \mathd r_{n -
    1} = I_{n - 1}
  \end{align*}
  which iteratively implies $I_n \leqslant 1$. Therefore we obtain
  \[ \mathbb{E} \left[ \exp \left( \lambda \int_s^t X_u \mathd u \right)
     \right] \leqslant \sum_{n = 0}^{\infty} \lambda^n = (1 - \lambda)^{- 1} . \qedhere
  \]
\end{proof}

\begin{lemma}
  \label{lem:gauss-lemma2}Let $Z \sim \mathcal{N} (m, \sigma^2)$ be a real
  valued Gaussian variable. Then for any $\theta \in (0, 1)$ there exists
  $c_{\theta} > 0$ such that
  \[ \mathbb{E} [| Z |^{- \theta}] \leqslant c_{\theta} \sigma^{- \theta} . \]
\end{lemma}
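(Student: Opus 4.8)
The plan is to reduce to a unit-variance Gaussian by scaling and then bound the resulting moment uniformly over the mean. Assuming as we may that $\sigma>0$, set $N:=Z/\sigma$, so that $N\sim\mathcal{N}(\mu,1)$ with $\mu:=m/\sigma$; since $|Z|^{-\theta}=\sigma^{-\theta}|N|^{-\theta}$ we have $\mathbb{E}[|Z|^{-\theta}]=\sigma^{-\theta}\,\mathbb{E}[|N|^{-\theta}]$. It therefore suffices to show that $c_{\theta}:=\sup_{\mu\in\mathbb{R}}\mathbb{E}[|N|^{-\theta}]$ is finite and depends only on $\theta$.

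To estimate $\mathbb{E}[|N|^{-\theta}]$ I would write it explicitly using the density of $\mathcal{N}(\mu,1)$,
\[
\mathbb{E}[|N|^{-\theta}]=\frac{1}{\sqrt{2\pi}}\int_{\mathbb{R}}|y|^{-\theta}\,e^{-(y-\mu)^2/2}\,\mathd y,
\]
and split the integral according to whether $|y|\leqslant 1$ or $|y|>1$. On $\{|y|\leqslant 1\}$ one bounds the Gaussian factor by $1$ and uses $\int_{-1}^{1}|y|^{-\theta}\,\mathd y=2/(1-\theta)<\infty$, which is exactly where the hypothesis $\theta\in(0,1)$ enters (integrability of the singularity at the origin). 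On $\{|y|>1\}$ one bounds $|y|^{-\theta}\leqslant 1$ and is left with $\int_{\mathbb{R}}e^{-(y-\mu)^2/2}\,\mathd y=\sqrt{2\pi}$. Adding the two contributions gives
\[
\mathbb{E}[|N|^{-\theta}]\leqslant \frac{1}{\sqrt{2\pi}}\,\big(\tfrac{2}{1-\theta}+\sqrt{2\pi}\,\big)=:c_{\theta},
\]
uniformly in $\mu$, and hence $\mathbb{E}[|Z|^{-\theta}]\leqslant c_{\theta}\,\sigma^{-\theta}$, as claimed.

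There is no genuine obstacle here; the only two points to keep in mind are that the bound is truly independent of the mean $m$ (this is what makes the lemma usable later, when $Z$ is a conditional Gaussian increment of an SLND process whose conditional mean is not controlled) and that the exponent restriction $\theta<1$ is precisely what guarantees the local integrability used in the $\{|y|\leqslant 1\}$ piece. If a more quantitative statement were wanted, one could alternatively note that $\mu\mapsto\mathbb{E}[|N|^{-\theta}]$ is maximised at $\mu=0$ by a symmetrisation argument and compute $\mathbb{E}[|N_0|^{-\theta}]$ for a standard Gaussian in closed form via the Gamma function, but the two-region split above is shorter and self-contained.
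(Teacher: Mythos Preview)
Your proof is correct and follows essentially the same approach as the paper: reduce to unit variance by scaling, then split the integral $\int |y|^{-\theta} e^{-(y-\mu)^2/2}\,\mathd y$ into the regions $|y|\leqslant 1$ and $|y|>1$, bounding one factor by its $L^\infty$-norm in each region. The paper phrases the split as an application of Young's inequality to the convolution $|\cdot|^{-\theta}\ast p$, but the underlying estimates are identical to yours.
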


\begin{proof}
  Set $Z = \sigma N + m$, then $\mathbb{E} [| Z |^{- \theta}] = \sigma^{-
  \theta} \mathbb{E} [| N - x |^{- \theta}]$ for $x = - m / \sigma$; therefore
  is sufficed to show that
  \[ \sup_{x \in \mathbb{R}} \mathbb{E} [| N - x |^{- \theta} ] = \sup_{x \in
     \mathbb{R}} \int | x - y |^{- \theta} p (y) \mathd y = \| | \cdot |^{-
     \theta} \ast p \|_{L^{\infty}} < \infty \]
  where $p$ stands for the Gaussian density $p (x) = (2 \pi)^{- 1 / 2} \exp (-
  | x |^2 / 2)$. By Young's inequality it holds
  \begin{align*}
    \| | \cdot |^{- \theta} \ast p \|_{L^{\infty}} & \leqslant \| (| \cdot
    |^{- \theta} \mathbbm{1}_{| \cdot | < 1}) \ast p \|_{L^{\infty}} + \| (|
    \cdot |^{- \theta} \mathbbm{1}_{| \cdot | \geqslant 1}) \ast p
    \|_{L^{\infty}}\\
    & \leqslant \| | \cdot |^{- \theta} \mathbbm{1}_{| \cdot | < 1} \|_{L^1}
    \| p \|_{L^{\infty}} + \| | \cdot |^{- \theta} \mathbbm{1}_{| \cdot |
    \geqslant 1} \|_{L^{\infty}}  \| p \|_{L^1}\\
    & \leqslant (2 \pi)^{- 1 / 2} \| | \cdot |^{- \theta} \mathbbm{1}_{|
    \cdot | < 1} \|_{L^1} + 1 < \infty
  \end{align*}
  which gives the conclusion.
\end{proof}

\begin{lemma}
  \label{lem:gauss-lemma3}Let $Y : [0, \pi] \rightarrow \mathbb{R}$ be a $(1 +
  H)$-SLND Gaussian process with constant $C_Y$, in the sense of
  Definition~\ref{def:LND}. Then for any $\alpha > H$ there exists $\lambda =
  \lambda (\alpha, H, C_Y) > 0$ s.t.
  \[ \mathbb{E} \left[ \exp \left( \lambda \int_{\bar{y}}^{\bar{y} + \delta} |
     \Delta^2_{\delta} Y_y |^{- \frac{1}{1 + \alpha}} \mathd y \right) \right]
     \leqslant 2 \quad \forall \, \delta \in (0, 1), \bar{y} \in [0, \pi -
     \delta] . \]
\end{lemma}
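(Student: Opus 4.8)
The plan is to reduce the bound to the Khasminskii-type estimate of Lemma~\ref{lem:gauss-lemma1}, once the right filtration is in place. Set $\theta \assign (1 + \alpha)^{-1} \in (0,1)$ and $\eta \assign (1 + H)\theta = (1+H)/(1+\alpha)$; the hypothesis $\alpha > H$ will be used only through the resulting inequality $\eta < 1$. Fix $\bar y$ and $\delta \in (0,1)$ and, on the interval $[\bar y, \bar y + \delta]$, introduce the nonnegative process $X_y \assign |\Delta^2_\delta Y_y|^{-\theta}$ together with the shifted filtration $\mathcal{G}_y \assign \mathcal{F}_{y + 2\delta}$. Since $\Delta^2_\delta Y_y = Y_{y+2\delta} - 2 Y_{y+\delta} + Y_y$ only involves the values of $Y$ at times $\leqslant y + 2\delta$, the process $X$ is adapted to $\{\mathcal{G}_y\}$; moreover $X_y$ is a.s.\ finite, because $\tmop{Var}(\Delta^2_\delta Y_y) \geqslant \tmop{Var}(Y_{y+2\delta} \mid \mathcal{F}_{y+\delta}) \geqslant C_Y \delta^{2(1+H)} > 0$ makes $\Delta^2_\delta Y_y$ a nondegenerate Gaussian.

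The core step is the Khasminskii bound $\tmop{ess}\sup_{\omega} \mathbb{E}[\int_s^{\bar y + \delta} X_r \,\mathd r \mid \mathcal{G}_s] \leqslant C_0$ for every $s \in [\bar y, \bar y + \delta]$, with $C_0 = C_0(\alpha, H, C_Y)$ deterministic. Here one exploits that for $s < r$ both in $[\bar y, \bar y + \delta]$ we have $r + \delta \leqslant s + 2\delta$ (since $r - s < \delta$); hence, conditionally on $\mathcal{G}_s = \mathcal{F}_{s+2\delta}$, the terms $Y_r$ and $Y_{r+\delta}$ are measurable, while $Y_{r+2\delta}$ is Gaussian with $\tmop{Var}(Y_{r+2\delta} \mid \mathcal{F}_{s+2\delta}) \geqslant C_Y (r-s)^{2(1+H)}$ by the $(1+H)$-SLND property. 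Consequently $\Delta^2_\delta Y_r$, conditioned on $\mathcal{G}_s$, is a Gaussian with (deterministic) variance at least $C_Y (r-s)^{2(1+H)}$, so Lemma~\ref{lem:gauss-lemma2} gives $\mathbb{E}[X_r \mid \mathcal{G}_s] \leqslant c_\theta C_Y^{-\theta/2} (r-s)^{-\eta}$ a.s. Integrating in $r$ and using $\eta < 1$ together with $\delta < 1$,
\[ \mathbb{E}\Big[ \int_s^{\bar y + \delta} X_r \,\mathd r \,\Big|\, \mathcal{G}_s \Big] \leqslant c_\theta C_Y^{-\theta/2} \int_s^{\bar y + \delta} (r-s)^{-\eta}\,\mathd r \leqslant \frac{c_\theta C_Y^{-\theta/2}}{1-\eta} \assign C_0, \]
a bound independent of $s$, $\bar y$ and $\delta$.

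It then remains to apply Lemma~\ref{lem:gauss-lemma1} to the process $X$ with filtration $\{\mathcal{G}_y\}$, constant $C_0$ and $\lambda' = 1/2$, which yields $\mathbb{E}[\exp(\tfrac{1}{2C_0} \int_{\bar y}^{\bar y + \delta} X_y \,\mathd y)] \leqslant 2$; so the claim holds with $\lambda \assign (2 C_0)^{-1}$, a quantity depending only on $\alpha$, $H$ and $C_Y$ (note $\theta$ and $c_\theta$ depend only on $\alpha$). The one genuinely delicate point is the conditional-variance lower bound for the second difference $\Delta^2_\delta Y_y$: the shift of the filtration must be taken to be exactly $2\delta$, so that two of the three nodes of $\Delta^2_\delta$ become $\mathcal{G}_s$-measurable while the third still obeys the full SLND estimate. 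The remaining measure-theoretic details — joint measurability of $(y,\omega) \mapsto X_y(\omega)$, and the tacit convention that one either restricts to $\bar y + 3\delta \leqslant \pi$ or regards $Y$ as available slightly beyond $[0,\pi]$, so that $\Delta^2_\delta Y_y$ is defined on all of $[\bar y, \bar y + \delta]$ — are routine and handled as elsewhere in the paper.
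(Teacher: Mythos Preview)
Your proof is correct and follows essentially the same route as the paper: the shifted filtration $\mathcal{G}_y = \mathcal{F}_{y+2\delta}$, the conditional-variance lower bound $\tmop{Var}(\Delta^2_\delta Y_r \mid \mathcal{G}_s) \geqslant C_Y (r-s)^{2(1+H)}$, the negative-moment estimate of Lemma~\ref{lem:gauss-lemma2}, and the Khasminskii bound of Lemma~\ref{lem:gauss-lemma1} with $\lambda' = 1/2$. If anything, you are more explicit than the paper in justifying why $Y_r$ and $Y_{r+\delta}$ are $\mathcal{G}_s$-measurable (via $r - s \leqslant \delta$), and your final remark on the domain constraint $\bar y + 3\delta \leqslant \pi$ flags a genuine imprecision in the statement itself.
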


\begin{proof}
  The result follows Lemmas~\ref{lem:gauss-lemma1} and~\ref{lem:gauss-lemma2}
  applied to the process $X_y = | \Delta^2_{\delta} \psi_y |^{- \frac{1}{1 +
  \alpha}}$. Indeed, denote by $\mathcal{F}_y$ the natural filtration
  generated by $\psi$ and let $\mathcal{G}_y \assign \mathcal{F}_{y + 2
  \delta}$. It is clear that $\Delta^2_{\delta} \psi_y = Y_{y + 2 \delta} - 2
  Y_{y + \delta} + Y_y$ is $\mathcal{G}_y$-adapted; for any $[z, y] \subset
  [\bar{y}, \bar{y} + \delta]$ it holds
  \[ \tmop{Var} (\Delta^2_{\delta} Y_y | \mathcal{G}_z) = \tmop{Var} (Y_{y + 2
     \delta} | \mathcal{F}_{z + 2 \delta}) \geqslant C_Y | y - z |^{2 (1 + H)}
     . \]
  As a consequence, we have a decomposition $\Delta^2_{\delta} Y_y =
  Z^{(1)}_{z, y} + Z^{(2)}_{z, y}$ with $Z^{(1)}_{z, y}$ adapted to
  $\mathcal{G}_z$ and $Z^{(2)}_{z, y}$ Gaussian and independent of
  $\mathcal{G}_z$; therefore
  \begin{align*}
    \mathbb{E} \left[ \int_u^{\bar{y} + \delta} | \Delta^2_{\delta} Y_y |^{-
    \frac{1}{1 + \alpha}} \mathd y | \mathcal{G}_z \right] & = \int_z^{\bar{y}
    + \delta} \mathbb{E} \left[ | Z^{(2)}_{z, y} + \cdot |^{- \frac{1}{1 +
    \alpha}} \right] (Z^{(1)}_{u, y}) \mathd y.
  \end{align*}
  By Lemma~\ref{lem:gauss-lemma2}, since $\tmop{Var} (Z^{(2)}_{z, y})
  \geqslant C_Y {| y - z |^{2 (1 + H)}} $ and $\theta = (1 + \alpha)^{- 1} \in
  (0, 1)$, it holds
  \[ \sup_{x \in \mathbb{R}} \mathbb{E} \left[ | Z^{(2)}_{z, y} + x |^{-
     \frac{1}{1 + \alpha}} \right] \lesssim_{\alpha} \tmop{Var} (Z^{(2)}_{z,
     y})^{- \frac{1}{2 (1 + \alpha)}} \lesssim_{\alpha, H, C_Y} | y - z |^{-
     \frac{1 + H}{1 + \alpha}} \]
  and thus
  \begin{align*}
    \mathbb{E} \left[ \int_z^{\bar{y} + \delta} | \Delta^2_{\delta} X_y |^{-
    \frac{1}{1 + \alpha}} \mathd y | \mathcal{G}_z \right] & \lesssim
    \int_z^{\bar{y} + \delta} | y - z |^{- \frac{1 + H}{1 + \alpha}} \mathd
    z\\
    & \lesssim \int_0^1 | r |^{- \frac{1 + H}{1 + \alpha}} \mathd r \sim C
    (\alpha, H, C_Y)
  \end{align*}
  where the estimate is uniform over $z \in [\bar{y}, \bar{y} + \delta]$,
  $\bar{y} \in \mathbb{T}$ and $\delta \in (0, 1)$. Choosing
  \[ \lambda = \frac{1}{2 C (\alpha, H, C_Y)}, \]
  we obtain the conclusion by applying Lemma~\ref{lem:gauss-lemma1}.
\end{proof}

With Lemmas~\ref{lem:gauss-lemma1}\mbox{-}\ref{lem:gauss-lemma3}
at hand, we can finally verify that suitable Gaussian processes verify Wei's
condition with probability~$1$; we give the statement in full generality, but
we stress that the most relevant example verifying the hypothesis below is the
canonical process $X$ associated to $\mu^H$, as granted by
Lemma~\ref{lem:fBm-LND}.

\begin{corollary}
  \label{cor:gauss-wei}Let $X : [0, \pi] \rightarrow \mathbb{R}$ be a Gaussian
  process such that
  \[ Y_y = \int_0^y X_z \mathd z  \]
  is $(1 + H)$\mbox{-}SLND for some $H \in (0, 1)$. Then
  \[ \mathbb{P} (\Gamma_{\alpha} (X) > 0) = 1 \]
  for any $\alpha > H$.
\end{corollary}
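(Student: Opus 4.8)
The plan is to chain together the two ingredients already assembled, namely the deterministic criterion of Proposition~\ref{prop:sufficient-stochastic} and the Gaussian exponential-moment bound of Lemma~\ref{lem:gauss-lemma3}. The key observation is purely notational: since by hypothesis $\psi = \int_0^{\cdot} X_z \mathd z = Y$ is $(1+H)$-SLND, the quantity $G_{\alpha}(\bar y, \delta)$ introduced in~\eqref{eq:def-G} is, for $u = X$, exactly $\int_{\bar y}^{\bar y + \delta} | \Delta^2_{\delta} Y_y |^{- \frac{1}{1+\alpha}} \mathd y$, i.e.\ precisely the object whose exponential moment is controlled in Lemma~\ref{lem:gauss-lemma3}.

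First I would fix the target exponent $\alpha > H$ and choose an intermediate value $\alpha'$ with $H < \alpha' < \alpha$. Applying Lemma~\ref{lem:gauss-lemma3} with parameter $\alpha'$ (admissible since $\alpha' > H$) produces a constant $\lambda = \lambda(\alpha', H, C_Y) > 0$ such that
\[ \sup_{\delta \in (0,1),\ \bar y \in [0, \pi - \delta]} \mathbb{E} \big[ \exp \big( \lambda\, G_{\alpha'}(\bar y, \delta) \big) \big] \leqslant 2 . \]
This is exactly the hypothesis of Proposition~\ref{prop:sufficient-stochastic}, with the triple $(\alpha, \lambda, \kappa)$ there instantiated as $(\alpha', \lambda, 2)$. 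That proposition then yields $\mathbb{P}(\Gamma_{\beta}(X) > 0) = 1$ for every $\beta > \alpha'$; since $\alpha > \alpha'$, taking $\beta = \alpha$ gives the claim.

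There is no real obstacle here: all the work was done in Proposition~\ref{prop:sufficient-stochastic} (via Lemmas~\ref{lem:wei-techlem-1}--\ref{lem:wei-techlem-2}) and in Lemma~\ref{lem:gauss-lemma3} (via Lemmas~\ref{lem:gauss-lemma1}--\ref{lem:gauss-lemma2}). The only subtlety is the unavoidable loss of an arbitrarily small amount of regularity: Proposition~\ref{prop:sufficient-stochastic} only delivers $\Gamma_{\beta} > 0$ for $\beta$ strictly above the exponent appearing in the moment bound, which forces the auxiliary $\alpha'$ and is ultimately responsible for the $\tilde\alpha > \alpha$ gap in Theorem~\ref{thm:main-thm-enhanced}~b). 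It is worth noting explicitly that the relevant instance is $X$ the canonical process of fractional Brownian motion $\mu^H$, for which $Y_y = \int_0^y X_z \mathd z$ is $(1+H)$-SLND by Lemma~\ref{lem:fBm-LND}, so that $\mu^H$-a.e.\ trajectory satisfies Wei's condition with any parameter strictly larger than $H$.
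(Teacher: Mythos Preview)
Your proof is correct and follows exactly the approach of the paper, which simply states that the result follows immediately by combining Lemma~\ref{lem:gauss-lemma3} and Proposition~\ref{prop:sufficient-stochastic}. Your explicit introduction of the intermediate exponent $\alpha'$ with $H<\alpha'<\alpha$ is the right way to unpack this one-line argument, since Proposition~\ref{prop:sufficient-stochastic} only yields $\Gamma_\beta>0$ for $\beta$ strictly above the parameter in the moment bound.
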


\begin{proof}
  It follows immediately combining Lemma~\ref{lem:gauss-lemma3} and
  Proposition~\ref{prop:sufficient-stochastic}.
\end{proof}

\subsection{Prevalence statements and proof of
Theorems~\ref{thm:main-thm-enhanced},~\ref{thm:main-thm-1}}\label{sec:prevalence-enhanced}

Similarly to Section~\ref{sec:prevalence-mixing}, in order to
prove prevalence statements in $B^{\alpha}_{1, \infty} (\mathbb{T})$, we will
actually start by establishing their analogues on $B^{\alpha}_{1, \infty} (0,
\pi)$.

\begin{theorem}
  \label{thm:prevalence-interval-wei}Let $\alpha \in (0, 1)$; then a.e.
  $\varphi \in B^{\alpha}_{1, \infty} (0, \pi)$ satisfies $\Gamma_{\beta}
  (\varphi) > 0$ for all $\beta > \alpha$.
\end{theorem}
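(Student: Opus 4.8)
The plan is to mimic the structure of the proof of Theorem~\ref{thm:prevalence-interval-inviscid}, replacing $\rho$-irregularity with Wei's condition and invoking Corollary~\ref{cor:gauss-wei} in place of Proposition~\ref{thm:prevalence-rho-irregularity}. First I would introduce, for each $\beta > 0$, the set
\[
\mathcal{B}_\beta := \{ \varphi \in B^\alpha_{1,\infty}(0,\pi) : \Gamma_\beta(\varphi) > 0 \}
\]
and check that it is Borel measurable in the topology of $B^\alpha_{1,\infty}(0,\pi)$. To this end I would write $\mathcal{B}_\beta = \bigcup_{m \in \mathbb{N}} \{\varphi : \Gamma_\beta(\varphi) \geq 1/m\}$ and argue that $\varphi \mapsto \Gamma_\beta(\varphi)$ is lower semicontinuous with respect to $L^1(0,\pi)$ convergence (hence a fortiori with respect to the $B^\alpha_{1,\infty}$ topology): indeed $\psi = \int_0^\cdot \varphi$ depends continuously on $\varphi$ in $L^1$, and for fixed $\delta, \bar y, c_1, c_2$ the map $\varphi \mapsto \int_{\bar y}^{\bar y + \delta} |\psi(y) - c_1 - c_2 y|^2 \,\mathrm{d}y$ is continuous, so $\Gamma_\beta^2$ is an infimum of continuous functions, which is upper semicontinuous; wait — one must be slightly careful here, since $\Gamma_\beta$ is an \emph{infimum}, hence upper semicontinuous, not lower. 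The correct observation is that $\{\Gamma_\beta > 0\}$ is still Borel: it equals $\bigcap_{j} \bigcup_{m} \{\varphi : \inf_{\delta \in (1/j,1),\, \bar y} \delta^{-2\beta-3}\inf_{c_1,c_2}\int \cdots \geq 1/m\}$, and for each fixed $j$ the inner infimum over the compact range $\delta \in [1/j, 1-1/j]$, $\bar y$ of a jointly continuous function is continuous in $\varphi$, so each such set is closed; this realizes $\mathcal{B}_\beta$ as a countable intersection of $F_\sigma$ sets, hence Borel. Then, since
\[
\{\varphi : \Gamma_\beta(\varphi) > 0 \text{ for all } \beta > \alpha\} = \bigcap_{n \in \mathbb{N}} \mathcal{B}_{\alpha + 1/n},
\]
and countable intersections of prevalent sets are prevalent, it suffices to prove each $\mathcal{B}_\beta$ is prevalent for fixed $\beta > \alpha$.

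Next, fix $\beta > \alpha$ and pick a Hurst parameter $H \in (0,1)$ with $\alpha < H < \beta$. Let $\mu^H$ be the law of fractional Brownian motion on $C([0,\pi])$ and $u = \{u_y\}$ its canonical process. Since $\mu^H$ is supported on $C^{H-\varepsilon}([0,\pi]) \subset B^\alpha_{1,\infty}(0,\pi)$ for small $\varepsilon > 0$ (as $H - \varepsilon > \alpha$), $\mu^H$ is a tight probability measure on $B^\alpha_{1,\infty}(0,\pi)$, so Property~\textit{i.} of Definition~\ref{def:prevalence} holds automatically; it remains to verify Property~\textit{ii.}, i.e. \eqref{eq:key-prevalence}. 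Fix any $\varphi \in B^\alpha_{1,\infty}(0,\pi)$. By Lemma~\ref{lem:fBm-LND}, $Y_y := \int_0^y u_z \,\mathrm{d}z$ is $(1+H)$-SLND; by Lemma~\ref{lem:LND-translation}, the translated process $Y_y + \int_0^y \varphi(z)\,\mathrm{d}z = \int_0^y (u + \varphi)_z\,\mathrm{d}z$ is also $(1+H)$-SLND. Applying Corollary~\ref{cor:gauss-wei} to the process $u + \varphi$ (whose antiderivative is $(1+H)$-SLND), and using $\beta > H$, we conclude $\mathbb{P}(\Gamma_\beta(u + \varphi) > 0) = 1$, i.e. $\mu^H(\varphi + \mathcal{B}_\beta) = 1$. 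Since $\varphi \in B^\alpha_{1,\infty}(0,\pi)$ was arbitrary, $\mu^H$ witnesses the prevalence of $\mathcal{B}_\beta$, completing the proof.

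I do not expect any genuine obstacle here: the whole argument is a transcription of the inviscid case, with Corollary~\ref{cor:gauss-wei} (itself built on Lemmas~\ref{lem:gauss-lemma1}--\ref{lem:gauss-lemma3}) doing the probabilistic heavy lifting, and Lemma~\ref{lem:fBm-LND} supplying the crucial fact that the \emph{antiderivative} of fBm is $(1+H)$-SLND. The only point requiring mild care is the measurability of $\mathcal{B}_\beta$, since $\Gamma_\beta$ is defined via an infimum rather than a supremum; the fix, as sketched above, is to exhaust the range of $\delta$ by compact subintervals $[1/j, 1-1/j]$ so that the relevant infima become continuous functions of $\varphi$ in the $L^1$ topology, exhibiting $\mathcal{B}_\beta$ as a Borel set. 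One should also note, as in Theorem~\ref{thm:prevalence-interval-inviscid}, that the compatibility constraint built into the SLND translation lemma requires $f$ (here the antiderivative of $\varphi$) only to be measurable, which is automatic, so no regularity of $\varphi$ beyond $B^\alpha_{1,\infty}$ membership is needed.
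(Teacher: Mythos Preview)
Your proof is correct and follows essentially the same approach as the paper: fix $\beta > \alpha$, choose $H \in (\alpha, \beta)$, and use that the antiderivative of fBm plus a deterministic perturbation is $(1+H)$-SLND (Lemmas~\ref{lem:fBm-LND} and~\ref{lem:LND-translation}) together with Corollary~\ref{cor:gauss-wei}. The only difference is that you overcomplicate the Borel measurability of $\mathcal{B}_\beta$: since $\Gamma_\beta$ is upper semicontinuous (as you yourself note), the super-level sets $\{\Gamma_\beta \geq 1/m\}$ are already closed, so $\mathcal{B}_\beta = \bigcup_m \{\Gamma_\beta \geq 1/m\}$ is $F_\sigma$ directly, with no need for the compact exhaustion of the $\delta$-range.
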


\begin{proof}
  Fix $\alpha \in (0, 1)$ and define $\mathcal{A} : = \left\{ \varphi \in
  B^{\alpha}_{1, \infty} (0, \pi) : \, \Gamma_{\beta} (\varphi) > 0 \text{ for
  all } \beta > \alpha \right\}$; it holds
  \[ \mathcal{A} = \bigcap_{n = 1}^{\infty} \bigcup_{m = 1}^{\infty}
     \mathcal{A}_{n, m} \assign \bigcap_{n = 1}^{\infty} \bigcup_{m =
     1}^{\infty} \left\{ \varphi \in B^{\alpha}_{1, \infty} (0, \pi) :
     \Gamma_{\beta} (\varphi) \geqslant \frac{1}{m} \text{ for } \beta =
     \alpha + \frac{1}{n} \right\} . \]
  The sets $\mathcal{A}_{n, m}$ are closed in the topology of $B^{\alpha}_{1,
  \infty} (0, \pi)$ (the map $\varphi \mapsto \Gamma_{\beta} (\varphi)$ is
  upper semicontinuous in the topology of $L^1 (0, \pi)$), thus $\mathcal{A}$
  is Borel measurable. In order to conclude, it is enough to show that for any
  fixed $\beta > \alpha$, the set $\mathcal{A}_{\beta} : = \left\{ \varphi \in
  B^{\alpha}_{1, \infty} (0, \pi) : \, \Gamma_{\beta} (\varphi) > 0 \right\}$
  (which is Borel by the same line of argument) is prevalent.
  
  Now fix $\beta > \alpha$ and choose $H \in (\alpha, \beta)$; denote by
  $\mu^H$ the law of fBm of parameter $H$ on $C ([0, \pi])$ and by $u = \{ u_y
  \}_{y \in [0, \pi]}$ the associated canonical process. Since $\mu^H$ is
  supported on $C^{H - \varepsilon} ([0, \pi])$ for any $\varepsilon > 0$ and
  $H > \alpha$, it is also a tight probability measure on $B^{\alpha}_{1,
  \infty} (0, \pi)$. By Lemma~\ref{lem:fBm-LND}, the associated process $\psi
  = \int_0^{\cdot} u (y) \mathd y$ is $(1 + H)$-SLND and so by
  Lemma~\ref{lem:LND-translation} the same holds for $f + \psi$, for any
  measurable $f : [0, \pi] \rightarrow \mathbb{R}$.
  
  In particular, for a given $\varphi \in B^{\alpha}_{1, \infty} (0, \pi)$,
  taking $f = \int_0^{\cdot} \varphi (y) \mathd y$, it follows from
  Corollary~\ref{cor:gauss-wei} and the choice $\beta > H$ that
  
  \begin{align*}
    \mu^H (\varphi + \mathcal{A}_{\beta}) & = \mu^H \left( \left\{ u \in
    B^{\alpha}_{1, \infty} (0, \pi) : \, \Gamma_{\beta} (u + \varphi) > 0
    \right\} \right) = 1.
  \end{align*}
  
  As the reasoning holds for any $\varphi \in B^{\alpha}_{1, \infty} (0,
  \pi)$, we deduce that $\mu^H$ witnesses the prevalence of
  $\mathcal{A}_{\beta}$ and we obtain the conclusion.
\end{proof}

As in Section~\ref{sec:prevalence-mixing}, we define for $\tilde{\varphi} :
[0, \pi] \rightarrow \mathbb{R}$ the map $(T \tilde{\varphi}) (y) =
\tilde{\varphi} (| y |)$; conversely for $\varphi : \mathbb{T} \rightarrow
\mathbb{R}$, $\varphi_1 (y) : = \varphi (y)$, $\varphi_2 (y) : = \varphi (-
y)$. Recall that if $\tilde{\varphi} \in B^{\alpha}_{1, \infty} \cap
L^{\infty}$, then $T \tilde{\varphi} \in B^{\alpha}_{1, \infty}$.

\begin{corollary}
  \label{cor:prevalence-periodic-wei}Almost every $\varphi \in B^{\alpha}_{1,
  \infty} (\mathbb{T})$ satisfies $\Gamma_{\beta} (\varphi) > 0$ for all
  $\beta > \alpha$.
\end{corollary}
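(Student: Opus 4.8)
The plan is to deduce this from Theorem~\ref{thm:prevalence-interval-wei} by the same ``deperiodization'' device used to pass from Theorem~\ref{thm:prevalence-interval-inviscid} to Corollary~\ref{cor:prevalence-periodic-inviscid}. First I would check measurability: the set
\[ \mathcal{A} \assign \left\{ \varphi \in B^{\alpha}_{1, \infty}(\mathbb{T}) : \Gamma_{\beta}(\varphi) > 0 \text{ for all } \beta > \alpha \right\} \]
can be written as $\bigcap_{n} \bigcup_{m} \mathcal{A}_{n,m}$ with $\mathcal{A}_{n,m} = \{ \varphi : \Gamma_{\alpha + 1/n}(\varphi) \geqslant 1/m \}$, and each $\mathcal{A}_{n,m}$ is closed in $B^{\alpha}_{1,\infty}(\mathbb{T})$ because $\varphi \mapsto \Gamma_{\beta}(\varphi)$ is upper semicontinuous in the $L^1(\mathbb{T})$ topology (exactly as in the proof of Theorem~\ref{thm:prevalence-interval-wei}); hence $\mathcal{A}$ is Borel. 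Since a countable intersection of prevalent sets is prevalent, it suffices to prove that for each fixed $\beta > \alpha$ the set $\mathcal{A}_{\beta} \assign \{ \varphi \in B^{\alpha}_{1,\infty}(\mathbb{T}) : \Gamma_{\beta}(\varphi) > 0 \}$ is prevalent.

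Next I would fix $\beta > \alpha$ and pick $H \in (\alpha, \beta)$, letting $\mu^H$ be the law of fBm on $C([0,\pi])$, which by Section~\ref{sec:fbm} is supported on $B^{\alpha}_{1,\infty}(0,\pi) \cap L^{\infty}(0,\pi)$. Because $T$ maps $B^{\alpha}_{1,\infty}(0,\pi) \cap L^{\infty}(0,\pi)$ continuously into $B^{\alpha}_{1,\infty}(\mathbb{T})$, the pushforward $\nu^H \assign T_{\sharp}\mu^H$ is a tight Borel probability measure on $B^{\alpha}_{1,\infty}(\mathbb{T})$, so it remains only to verify the transversality condition~\eqref{eq:key-prevalence}. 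Given $\varphi \in B^{\alpha}_{1,\infty}(\mathbb{T})$ with associated pair $(\varphi_1,\varphi_2)$ as in Lemma~\ref{lem:wei-techlem0}, one has the identities $(Tu+\varphi)_1 = u + \varphi_1$ and $(Tu+\varphi)_2 = u + \varphi_2$ (using $(Tu)(y) = u(|y|)$), so Lemma~\ref{lem:wei-techlem0} gives
\[ \Gamma_{\beta}(Tu + \varphi) > 0 \iff \Gamma_{\beta}(u + \varphi_1) > 0 \text{ and } \Gamma_{\beta}(u + \varphi_2) > 0, \]
whence
\[ \nu^H(\varphi + \mathcal{A}_{\beta}) = \mu^H\Big( \bigcap_{i=1}^{2} \{ u \in B^{\alpha}_{1,\infty}(0,\pi) : \Gamma_{\beta}(u+\varphi_i) > 0 \} \Big). \]
By the proof of Theorem~\ref{thm:prevalence-interval-wei} each of these two events has full $\mu^H$-measure — this is where $H < \beta$ is used, through Corollary~\ref{cor:gauss-wei} applied to the $(1+H)$-SLND process $\psi + \int_0^{\cdot}\varphi_i$ — and a finite intersection of full-measure sets has full measure, so $\nu^H(\varphi + \mathcal{A}_{\beta}) = 1$ for every $\varphi$. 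Thus $\nu^H$ witnesses the prevalence of $\mathcal{A}_{\beta}$, and intersecting over $\beta = \alpha + 1/n$ yields the claim.

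I do not expect any genuine obstacle: the statement is a soft consequence of the interval case. The only points needing a little care are (i) confirming that $\nu^H$ is genuinely tight on the non-separable space $B^{\alpha}_{1,\infty}(\mathbb{T})$, which holds because it is the continuous image of the tight measure $\mu^H$, and (ii) the bookkeeping of the identities $(Tu+\varphi)_i = u + \varphi_i$ from the definitions $\varphi_1(y) = \varphi(y)$, $\varphi_2(y) = \varphi(-y)$. Both are essentially verbatim repetitions of the corresponding steps in Corollary~\ref{cor:prevalence-periodic-inviscid}.
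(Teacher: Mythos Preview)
Your proposal is correct and follows essentially the same approach as the paper: reduce to prevalence of each $\mathcal{A}_{\beta}$, choose $H \in (\alpha,\beta)$, push forward $\mu^H$ via $T$ to obtain a tight transverse measure $\nu^H$ on $B^{\alpha}_{1,\infty}(\mathbb{T})$, and use Lemma~\ref{lem:wei-techlem0} together with the interval result (Theorem~\ref{thm:prevalence-interval-wei}, ultimately Corollary~\ref{cor:gauss-wei}) to verify $\nu^H(\varphi + \mathcal{A}_{\beta}) = 1$. In fact you spell out more detail than the paper, which simply refers back to the proof of Corollary~\ref{cor:prevalence-periodic-inviscid}.
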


\begin{proof}
  The proof is very similar to that of
  Corollary~\ref{cor:prevalence-periodic-inviscid}, again employing measures
  of the form $\nu^H = T_{\sharp} \mu^H$ for suitable $H \in (0, 1)$;
  specifically, once we fix $\beta > \alpha$ and we define a subset
  $\mathcal{A}_{\beta}$ of $B^{\alpha}_{1, \infty} (\mathbb{T})$ as in the
  proof of Theorem~\ref{thm:prevalence-interval-wei}, it suffices to choose $H
  \in (\alpha, \beta)$. In this way $\mu^H$ is tight on $B^{\alpha}_{1,
  \infty} (0, \pi) \cap L^{\infty} (0, \pi)$, so $\nu^H$ is tight on
  $B^{\alpha}_{1, \infty} (\mathbb{T})$; the verification that
  \[ \nu^H (\varphi + \mathcal{A}_{\beta}) = 1 \quad \forall \, \varphi \in
     B^{\alpha}_{1, \infty} (\mathbb{T}) \]
  is almost identical to that of
  Corollary~\ref{cor:prevalence-periodic-inviscid}, only this time invoking
  Lemma~\ref{lem:wei-techlem0} and Theorem~\ref{thm:prevalence-interval-wei}.
\end{proof}

At this point we have all the ingredient to close the dissipative case.

\begin{proof}[Proof of Theorem~\ref{thm:main-thm-enhanced}]
  The lower bound comes from Proposition~\ref{prop:lower-bound-enhanced},
  while the upper bound from a combination of Theorem~\ref{thm:wei} and
  Corollary~\ref{cor:prevalence-periodic-wei}.
\end{proof}

The main result of the paper, Theorem~\ref{thm:main-thm-1}, is now a direct
consequence of Theorems~$\ref{thm:main-thm-inviscid}$
and~\ref{thm:main-thm-enhanced}.
In fact, let us record here a slightly sharper estimate. Given $f \in L^2
(\mathbb{T}^2)$, for any $s \in \mathbb{R}$ define
\[ \| f \|_{L^2_x H^s_y}^2 \assign \sum_{k \in \mathbb{Z}} \| P_k f \|_{H^s
   (\mathbb{T}; \mathbb{C})}^2 = \sum_{(k, \eta) \in \mathbb{Z}^2} (1 + | \eta
   |^2)^s  | \hat{f} (k, \eta) |^2 ; \]
it's clear that, for $s \geqslant 0$, $\| f \|_{L^2_x H^s_y} \leqslant \| f
\|_{H^s (\mathbb{T}^2)}$ and $\| f \|_{H^{- s} (\mathbb{T}^2)} \leqslant \| f
\|_{L^2_x H^{- s}_y}$.

\begin{theorem}
  Almost every $u \in B^{\alpha}_{1, \infty} (\mathbb{T})$ satisfies the
  following property: for any $\tilde{\alpha} > \alpha$, there exists $C = C
  (\alpha, \tilde{\alpha}, u)$ such that, for any $f_0 \in H^{1 / 2}
  (\mathbb{T}^2)$ with $P_0 f_0 \equiv 0$, it holds
  \begin{equation}
    \| e^{t u \partial_x} f_0 \|_{{L^2_x}  H^{- 1 / 2}_y} \leqslant C t^{-
    \frac{1}{2 \tilde{\alpha}}} \| f_0 \|_{L^2_x H^{1 / 2}_y} .
    \label{eq:final-thm-eq1}
  \end{equation}
  Almost every $u \in B^{\alpha}_{1, \infty} (\mathbb{T})$ satisfies the
  following property: for any $\tilde{\alpha} > \alpha$ there exist $C_i = C
  (\alpha, \tilde{\alpha}, u)$ such that, for any $f_0 \in L^2 (\mathbb{T}^2)$
  with $P_0 f_0 \equiv 0$, it holds
  \begin{equation}
    \| e^{- t (u \partial_x - \nu \Delta)} f \|_{L^2 (\mathbb{T}^2)} \leqslant
    C_1 \exp \left( - C_2 t \nu^{\frac{\tilde{\alpha}}{\tilde{\alpha} + 2}}
    \right) \| f_0 \|_{L^2 (\mathbb{T}^2)} . \label{eq:final-thm-eq2}
  \end{equation}
\end{theorem}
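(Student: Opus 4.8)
The plan is to reduce both inequalities to the single-Fourier-mode estimates already established in Sections~\ref{sec:inviscid-mixing}--\ref{sec:enhanced-dissipation}, using Parseval's identity in the $x$-variable. First I would decompose $f_0 = \sum_{k \in \mathbb{Z}_0} f_0^k(y)\, e^{ikx}$ with $f_0^k = P_k f_0$; the hypothesis $P_0 f_0 \equiv 0$ discards the zero mode, so only $|k| \geq 1$ occur. Since multiplication by $u(y)$, together with $\partial_x$ and $\Delta$, commutes with each $P_k$, the relevant semigroups are block-diagonal in this decomposition: $P_k(e^{tu\partial_x} f_0) = e^{itku} f_0^k$ in the inviscid case, and $P_k(e^{-t(u\partial_x - \nu\Delta)} f_0) = e^{-\nu k^2 t}\, e^{tL_{k,\nu}} f_0^k$ in the dissipative case, with $L_{k,\nu} = -iku + \nu\partial_y^2$. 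By definition of $\|\cdot\|_{L^2_x H^s_y}$ it then suffices to bound each mode and sum the squares over $k \in \mathbb{Z}_0$.

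For \eqref{eq:final-thm-eq1}, fix $\tilde\alpha > \alpha$ and set $\rho := (2\tilde\alpha)^{-1} < (2\alpha)^{-1}$. By Corollary~\ref{cor:prevalence-periodic-inviscid}, almost every $u \in B^\alpha_{1,\infty}(\mathbb{T})$ is $\rho$-irregular, hence $(\gamma,\rho)$-irregular for some $\gamma > 1/2$. For such $u$, Lemma~\ref{lem:estim-mixing-rho} with $\xi = tk$ gives
\[ \|e^{itku} f_0^k\|_{H^{-1/2}} \leq C(\gamma)\, \|\Phi^u\|_{\gamma,\rho}\, |tk|^{-\rho}\, \|f_0^k\|_{H^{1/2}} \leq C(\gamma)\, \|\Phi^u\|_{\gamma,\rho}\, t^{-\rho}\, \|f_0^k\|_{H^{1/2}}, \]
using $|k| \geq 1$ in the last step. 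Squaring, summing over $k \in \mathbb{Z}_0$, recalling $\sum_k \|f_0^k\|_{H^{1/2}}^2 = \|f_0\|_{L^2_x H^{1/2}_y}^2$, and taking square roots yields \eqref{eq:final-thm-eq1} with $C = C(\gamma)\,\|\Phi^u\|_{\gamma,\rho}$ (the bound being vacuous at $t=0$).

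For \eqref{eq:final-thm-eq2}, fix $\tilde\alpha > \alpha$ and pick $\alpha' \in (\alpha,\tilde\alpha)$. By Corollary~\ref{cor:prevalence-periodic-wei}, almost every $u \in B^\alpha_{1,\infty}(\mathbb{T})$ satisfies $\Gamma_{\alpha'}(u) > 0$, so Theorem~\ref{thm:wei} supplies $C_1, C_2 > 0$ with
\[ \|e^{tL_{k,\nu}}\|_{L^2 \to L^2} \leq C_1 \exp\!\left(-C_2\, \nu^{\frac{\alpha'}{\alpha'+2}}\, |k|^{\frac{2}{\alpha'+2}}\, t\right) \leq C_1 \exp\!\left(-C_2\, \nu^{\frac{\tilde\alpha}{\tilde\alpha+2}}\, t\right) \]
for all $k \in \mathbb{Z}_0$, $\nu \in (0,1)$, $t \geq 0$, where I used $|k| \geq 1$ and that $x \mapsto x/(x+2)$ is increasing, so $\nu^{\alpha'/(\alpha'+2)} \geq \nu^{\tilde\alpha/(\tilde\alpha+2)}$ for $\nu \leq 1$. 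Since moreover $e^{-\nu k^2 t} \leq 1$, applying this to $f_0^k$, squaring, summing over $k \in \mathbb{Z}_0$, and using $\sum_k \|f_0^k\|_{L^2}^2 = \|f_0\|_{L^2(\mathbb{T}^2)}^2$ gives \eqref{eq:final-thm-eq2} (the case $\nu = 0$ being the trivial $L^2$-isometry of transport). Finally, Theorem~\ref{thm:main-thm-1} follows from \eqref{eq:final-thm-eq1}--\eqref{eq:final-thm-eq2} and the elementary comparisons $\|f\|_{H^{-s}(\mathbb{T}^2)} \leq \|f\|_{L^2_x H^{-s}_y}$ and $\|f\|_{L^2_x H^{s}_y} \leq \|f\|_{H^s(\mathbb{T}^2)}$ for $s \geq 0$.

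There is essentially no real obstacle: the whole argument is an orthogonal decomposition followed by summation of per-mode bounds already proven. The only point needing (minor) care is that the decay factors $t^{-\rho}$, respectively the exponentials, are \emph{uniform} in the mode index $k$ precisely because $|k| \geq 1$, which is what permits pulling them out of the sum; and one should keep track that the $u$-dependent constants enter only through $\|\Phi^u\|_{\gamma,\rho}$ and $\Gamma_{\alpha'}(u)$, both finite on the relevant prevalent sets.
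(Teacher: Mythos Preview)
Your proposal is correct and follows essentially the same route as the paper: decompose in Fourier modes $P_k$, apply the per-mode estimates from Sections~\ref{sec:inviscid-mixing}--\ref{sec:enhanced-dissipation}, and sum in $\ell^2$ using $|k|\geq 1$ to make the decay factors uniform in $k$. The only cosmetic difference is that you invoke the constituent results (Corollary~\ref{cor:prevalence-periodic-inviscid} + Lemma~\ref{lem:estim-mixing-rho}, Corollary~\ref{cor:prevalence-periodic-wei} + Theorem~\ref{thm:wei}) directly rather than the packaged Theorems~\ref{thm:main-thm-inviscid}\,b) and~\ref{thm:main-thm-enhanced}\,b), and you are a bit more explicit about the intermediate exponent $\alpha'\in(\alpha,\tilde\alpha)$ and the monotonicity of $x\mapsto x/(x+2)$.
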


\begin{proof}
  By Theorem~\ref{thm:main-thm-inviscid} b), for almost every $u \in
  B^{\alpha}_{1, \infty} (\mathbb{T})$ and any $\tilde{\alpha} > \alpha$ it
  holds
  \begin{align*}
    \| e^{- t u \partial_x} f_0 \|_{{L^2_x}  H^{- 1 / 2}_y}^2 & = \sum_{k \in
    \mathbb{Z}_0} \| P_k (e^{- t u \partial_x} f_0) \|_{H^{- 1 / 2}}^2
    \lesssim \sum_{k \in \mathbb{Z}_0} (t | k |)^{- \frac{1}{\tilde{\alpha}}}
    \| P_k f_0 \|_{H^{- 1 / 2}}^2 \lesssim t^{- \frac{1}{\tilde{\alpha}}} \|
    f_0 \|_{{L^2_x}  H^{- 1 / 2}_y}^2
  \end{align*}
  proving~\eqref{eq:final-thm-eq1}. Denote $\mathcal{L}_{\nu} = - u \partial_x
  + \nu \partial_y^2$, so that $- u \partial_x + - \nu \Delta =
  \mathcal{L}_{\nu} + \nu \partial_x^2$, where the operators
  $\mathcal{L}_{\nu}$ and $\nu \partial_x^2$ commute; also observe that $P_k
  (e^{t \partial_x^2} f) = e^{- t k^2} P_k  f$.
  
  Combining these properties with Theorem~\ref{thm:main-thm-enhanced}, for
  almost every $u \in B^{\alpha}_{1, \infty} (\mathbb{T})$ and any
  $\tilde{\alpha} > \alpha$ it holds
  \begin{align*}
    \| e^{t (- u \partial_x + \nu \Delta)} f \|_{L^2}^2 & = \sum_{k \in
    \mathbb{Z}_0} \| P_k (e^{t \partial_x^2} e^{t \mathcal{L}_{\nu}} f)
    \|_{L^2}^2 = \sum_{k \in \mathbb{Z}_0} e^{- 2 t k^2} \| P_k (e^{t
    \mathcal{L}_{\nu}} f) \|_{L^2}^2\\
    & \lesssim \sum_{k \in \mathbb{Z}_0} \exp \left( - 2 t | k |^2 - C t
    \nu^{\frac{\alpha}{\alpha + 2}} | k |^{\frac{2}{\alpha + 2}} \right) \|
    P_k f \|_{L^2}^2\\
    & \lesssim \exp \left( - C t \nu^{\frac{\alpha}{\alpha + 2}} \right)
    \sum_{k \in \mathbb{Z}_0} \| P_k f \|_{L^2}^2
  \end{align*}
  which yields~\eqref{eq:final-thm-eq2}.
\end{proof}

\section{Further comments and future directions}

We have shown in this paper that generic rough shear flows satisfy both
inviscid mixing and enhanced dissipation properties, with rates sharply
determined by the regularity parameter $\alpha \in (0, 1)$ in the Besov scale
$B^{\alpha}_{1, \infty}$. In the enhanced dissipation case, this confirms the
intuition from~{\cite{colombozelati}}; instead in the inviscid mixing one, it
shows that the behavior presented by Weierstrass-type functions constructed
therein is not generic in the sense of prevalence. Our results provide a
connection to the property of $\rho$-irregularity, which was never observed in
this context, and highlight the importance of working with mixing scales $H^{-
s}$ with $s \neq 1$.

\

We conclude by presenting a few additional remarks and open problems arising
from this work.
\begin{enumeratenumeric}
  \item We are currently unable to determine whether there is a clear
  connection between the properties of $\rho$-irregularity and Wei's
  condition. Lemma~\ref{lem:estim-mixing-rho}, together with the trivial
  estimate $\| f_t \|_{H^{- 1}} \leqslant \| f_t \|_{H^{- 1 / 2}}$, imply that
  for $\alpha \in (0, 1 / 2)$ the shear flows $u \in C^{\alpha}$ constructed
  in~{\cite{colombozelati}} satisfy $\Gamma_{\alpha} (u) > 0$ but are not
  $\rho$-irregular with $\rho \sim (2 \alpha)^{- 1}$. Heuristically, this fact
  is similar to the existence of flows with small dissipation time which are
  not mixing, like the cellular flows presented in~{\cite{iyer2021}}.
  
  \item The above argument also implies the existence of Weierstrass type
  functions which are not $\rho$-irregular, for suitable values $\rho$. We
  believe this problem was open in the probabilistic community, although never
  been explicitly addressed in the literature.
  
  \item Even without establishing a direct connection to Wei's condition, it
  would be interesting to show that functions $u$ which are $\rho$-irregular
  for $\rho \sim (2 \alpha)^{- 1}$ are diffusion enhancing with rate
  $r_{\text{dif}} (\nu) \sim \nu^{\alpha / (\alpha + 2)}$, in
  line with the heuristic argument presenting in the introduction. Since such
  $u$ are mixing, they are indeed diffusion enhancing with a suitable rate
  by~{\cite{constantin2008}}; however the quantitative results
  from~{\cite{zelati2020relation}} only imply the worsened rate $\nu^{3 \alpha
  / (1 + 3 \alpha)}$ and it is rather unclear how to ``bridge the gap''.
  
  \item Going through the same proof as in Lemma~\ref{lem:estim-mixing-rho},
  one can show that if $u$ is $(\gamma, \rho)$-irregular with $\gamma > 1 -
  s$, then $u$ is mixing on the scale $H^s$ with rate $r_{s
  \text{-mix}} (t) = t^{\rho}$. In the case $\gamma = 0$ an
  even simpler proof based on duality and integration by parts provides mixing
  on the scale $H^1$ with the same rate. In fact, since $H^1 (\mathbb{T};
  \mathbb{C})$ is an algebra, by integration by parts it holds
  \[ \begin{array}{ll}
       | \langle e^{i \xi u} f, g \rangle | & = \left| \int_{- \pi}^{\pi} e^{i
       \xi u (y)} f (y) g (y) \mathd y \right|\\
       & \leqslant | (f g) (- \pi) | \left| \int_{- \pi}^{\pi} e^{i \xi u
       (y)} \mathd y \right|\\
       & \qquad + \int_{- \pi}^{\pi} | (f g)' (y) | \left| \int_{- \pi}^y
       e^{i \xi u (z)} \mathd z \right| \mathd y\\
       & \lesssim (\| f g \|_{L^{\infty}} + \| (f g)' \|_{L^1})  \| \Phi^u
       \|_{0, \rho}  | \xi |^{- \rho}\\
       & \lesssim \| f \|_{H^1} \| g \|_{H^1} \| \Phi^u \|_{0, \rho}  | \xi
       |^{- \rho}
     \end{array} \]
  which by duality implies $\| e^{i \xi u} f \|_{H^{- 1}} \lesssim \| f
  \|_{H^1} \| \Phi^u \|_{0, \rho}  | \xi |^{- \rho}$ and so the claim.
  
  \item It is however an open problem to provide examples of $(0,
  \rho)$-irregular functions $u$, for any $\rho < 1$. See Remark~69
  in~{\cite{galeati2020prevalence}} for a deeper discussion. There are several
  examples of $u : [0, \pi] \rightarrow \mathbb{R}$ which are $(0,
  1)$-irregular, including the choice $u (y) = y$; by Proposition~1.4
  from~{\cite{choukgubinelli1}}, it is enough to require the existence of
  $\delta > 0$ such that
  \begin{equation}
    \frac{1}{\delta} \leqslant | u' (y) |, \quad \frac{| u'' (y) |}{| u' (y)
    |} \leqslant \delta \quad \forall \, y \in [0, \pi] ;
    \label{eq:condition-rho}
  \end{equation}
  observe the similarity of condition~\eqref{eq:condition-rho} with
  Assumption~(H) from~{\cite{zelati2020stable}}.
  
  \item The property of $(\gamma, \rho)$-irregularity can be reformulated in
  terms of the (Fourier transform of) \tmtextit{occupation measure} of $u$,
  namely the family $\{ \mu^u_I, I \subset \mathbb{T} \}$ given by $\mu^u_I =
  u_{\sharp} \mathcal{L}_I$ where $I$ are subintervals of $\mathbb{T}$ and
  $\mathcal{L}_I$ stands for the Lebesgue measure on $I$; see Section~2.3
  from~{\cite{galeati2020prevalence}} for more details. Closely related to the
  occupation measure of $u$ is its \tmtextit{local time}, namely the
  Radon--Nikodym derivative $\mathd \mu^u_{\mathbb{T}} / \mathd
  \mathcal{L}_{\mathbb{T}}$, which has been intensively studied for stochastic
  processes, see the review~{\cite{geman}}. The following question arises
  naturally: is it possible to link the mixing properties of $u$ to the
  regularity of its local time?
  
  \item In the paper we have always focused on the scales $B^{\alpha}_{1,
  \infty} (\mathbb{T})$ with $\alpha \in (0, 1)$. If one is instead interested
  in the mixing properties of generic $u \in C (\mathbb{T})$, much faster
  rates are available. Indeed for any $\beta > 1$ it's possible to construct
  $\tilde{u}^{\beta} \in C ([0, \pi])$ satisfying
  \begin{equation}
    \left| \int_{y_1}^{y_2} e^{i \xi u^{\beta} (z)} \mathd z \right|
    \lesssim_{\gamma, \beta} | y_2 - y_1 |^{\gamma} \exp \left( - C_{\gamma,
    \beta}  | \xi |^{\frac{2}{1 + \beta}} \right) \quad \forall \, \xi \in
    \mathbb{R}, \quad 0 \leqslant y_1 \leqslant y_2 \leqslant \pi
    \label{eq:exp-rho-irr}
  \end{equation}
  and so by symmetrization the same holds for $u^{\beta} \assign T
  \tilde{u}^{\beta}$. Such $\tilde{u}^{\beta}$ are given by typical
  realization of the so called $(2 \beta)$-log Brownian motion,
  see~{\cite{perkowski}} for its definition and Propositions~48 and~49
  from~{\cite{galeati2020prevalence}} for the proof of~\eqref{eq:exp-rho-irr}.
  In fact, one could use the law of such process to prove that a.e. $u \in C
  (\mathbb{T})$ satisfies~\eqref{eq:exp-rho-irr} for any $\beta > 1$ (the
  value $\beta = 1$ can only be attained by Caratheodory functions, which are
  naturally discontinuous). Arguing as in the proof of
  Lemma~\ref{lem:estim-mixing-rho} one can deduce that such $u$ are
  exponentially mixing, in the sense that they satisfy the estimate
  \begin{equation}
    \| e^{i \xi u} g \|_{H^{- 1}} \lesssim \exp \left( - C_{\gamma, \beta}  |
    \xi |^{\frac{2}{1 + \beta}} \right)  \| g \|_{H^1} \quad \forall \, g \in
    H^1 (\mathbb{T}; \mathbb{C})
  \end{equation}
  and so that
  \begin{equation}
    \| e^{- t u \partial_x} f \|_{L^2_x H^{- 1}_y} \lesssim \exp \left( -
    C_{\gamma, \beta} t^{\frac{2}{1 + \beta}} \right) \| f \|_{L^2_x H^1_y}
  \end{equation}
  for all $f \in H^1 (\mathbb{T}^2)$ satisfying $P_0 f \equiv 0$.
  
  \item Finally, let us point out that the property of $\rho$-irregularity
  also holds for generic vector valued functions $u : [0, 1] \rightarrow
  \mathbb{R}^d$ (resp. $u : \mathbb{T} \rightarrow \mathbb{R}^d$), for any $d
  \in \mathbb{N}$, see~{\cite{galeati2020prevalence}}. In particular, similar
  statements to part \tmtextit{i.} of Theorem~\ref{thm:main-thm-1} can be
  established for ``higher dimensional'' shear flows of the form
  \[ \partial_t f + \bar{u} \cdot \nabla f = \nu \Delta f \]
  for $f : \mathbb{T}^{d + 1} \rightarrow \mathbb{R}$, $\bar{u} (x_1, \ldots,
  x_{d + 1}) \assign (u (x_{d + 1}), 0)^T$; observe that for $d = 2$,
  $\bar{u}$ is a stationary solution to $3$D Euler equations. In light
  of~{\cite{constantin2008}}, the vector field $\bar{u}$ constructed by a
  $\rho$-irregular $u$ is diffusion enhancing; thus can be applied in the
  study of suppression of blow-up by mixing phenomena similarly to what was
  done in~{\cite{kiselev2016,bedrossian2017suppression,iyer2021}}.
\end{enumeratenumeric}
\appendix\section{Besov spaces}\label{app:besov}

In this appendix we record fundamentals on Besov spaces $B^s_{p, q}$ on the
torus $\mathbb{T}^d$, although in the paper we only need the case $d = 1$. For
a gentle introduction on spaces on $\mathbb{R}^d$ we refer to the
monograph~{\cite{bahouri}}; see also the classical paper~{\cite{simon}} for
spaces on an interval $I \subset \mathbb{R}$. All their properties transfer to
the analogous spaces on $\mathbb{T}^d$ by a clever use of Poisson summation
formula, see~{\cite{gubinelli2015lectures}},~{\cite{mourrat}}. Alternatively,
periodic Besov spaces have been treated in Chapter~3
of~{\cite{schmeisser1987}}.

\

Given a dyadic partition of the unity $(\chi, \varphi)$ and the associated
Littlewood--Paley blocks $\{ \Delta_j \}_{j \geqslant - 1}$, the
(inhomogeneous) Besov spaces $B^s_{p, q} (\mathbb{T}^d)$ with $s \in
\mathbb{R}$, $p, q$ are defined as the set of tempered distributions $f \in
\mathcal{S}' (\mathbb{T}^d)$ such that
\[ \| f \|_{B^s_{p, q}} \assign \| 2^{s j}  \| \Delta_j f \|_{L^p} \|_{\ell^q}
   = \left( \sum_{j = - 1}^{\infty} 2^{j s q}  \| \Delta_j f \|_{L^p}^q
   \right)^{1 / q} < \infty \]
with the usual conventions when $q = \infty$. $(B^s_{p, q} (\mathbb{T}^d), \|
\cdot \|_{B^s_{p, q}})$ are Banach spaces and the definition is independent of
the choice of the partition of unity $(\chi, \varphi)$. Besov spaces are handy
to use due to their many properties, including functional embeddings and
behavior under derivation and multiplication; we recall them briefly.

\begin{proposition}[{\cite{bahouri}}, Prop. 2.71]
  \label{prop:besov-embeddings}Let $1 \leqslant p_1 \leqslant p_2 \leqslant
  \infty$ and $1 \leqslant q_1 \leqslant q_2 \leqslant \infty$. Then for any
  $s \in \mathbb{R}$, the space $B^s_{p_1, q_1}$ continuously embeds in $B^{s
  - d \left( \frac{1}{p_1} - \frac{1}{p_2} \right)}_{p_2, q_2}$.
\end{proposition}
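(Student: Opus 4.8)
The plan is to reduce the embedding to two elementary ingredients: a gain in the summability index $q$, which is essentially free, and a gain in the integrability index $p$, which is a consequence of Bernstein's inequality for frequency-localised functions. For the first, since $q_1 \leqslant q_2$ one has the trivial inclusion $\ell^{q_1} (\mathbb{N}) \hookrightarrow \ell^{q_2} (\mathbb{N})$ with $\| a \|_{\ell^{q_2}} \leqslant \| a \|_{\ell^{q_1}}$; applied to the sequence $a_j = 2^{j s} \| \Delta_j f \|_{L^p}$ this gives $B^s_{p, q_1} \hookrightarrow B^s_{p, q_2}$ for every fixed $p$. It therefore suffices to prove $B^s_{p_1, q} \hookrightarrow B^{s - d (1 / p_1 - 1 / p_2)}_{p_2, q}$ for a single index $q$ and then compose the two embeddings.

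For the $p$-gain, I would invoke the periodic Bernstein inequality: there is a constant $C$ such that, whenever $g \in \mathcal{S}' (\mathbb{T}^d)$ has Fourier spectrum contained in a ball of radius $\lambda \geqslant 1$, one has $\| g \|_{L^{p_2}} \leqslant C \lambda^{d (1 / p_1 - 1 / p_2)} \| g \|_{L^{p_1}}$. On the torus this follows by writing $g = K_\lambda \ast g$ for a smooth kernel $K_\lambda$ obtained by suitably rescaling a fixed bump function adapted to the Littlewood--Paley partition, then applying Young's convolution inequality with the relation $1 + 1 / p_2 = 1 / r + 1 / p_1$ together with the scaling estimate $\| K_\lambda \|_{L^r} \lesssim \lambda^{d (1 - 1 / r)} = \lambda^{d (1 / p_1 - 1 / p_2)}$; the periodic kernel bounds reduce to the Euclidean ones via Poisson summation, exactly as in the treatments of periodic Besov spaces cited above. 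Since $\Delta_j f$ has spectrum in a ball of radius $\sim 2^j$ (a ball for $j = -1$, an annulus for $j \geqslant 0$), this yields $\| \Delta_j f \|_{L^{p_2}} \lesssim 2^{j d (1 / p_1 - 1 / p_2)} \| \Delta_j f \|_{L^{p_1}}$ uniformly in $j \geqslant -1$.

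Combining the two steps, I would multiply the last inequality by $2^{j (s - d (1 / p_1 - 1 / p_2))}$ to obtain $2^{j (s - d (1 / p_1 - 1 / p_2))} \| \Delta_j f \|_{L^{p_2}} \lesssim 2^{j s} \| \Delta_j f \|_{L^{p_1}}$ for every $j$, then take the $\ell^{q_2}$ norm in $j$ and bound it by the $\ell^{q_1}$ norm; this gives $\| f \|_{B^{s - d (1 / p_1 - 1 / p_2)}_{p_2, q_2}} \lesssim \| f \|_{B^s_{p_1, q_1}}$, which is the assertion. There is no genuine difficulty here, since the statement is entirely classical and we only record it for later reference; the sole point demanding a little care is the passage from the Euclidean Bernstein inequality to its periodic analogue, which is precisely why in the text we simply cite {\cite{bahouri}}, Proposition~2.71, combined with the standard transfer of results from $\mathbb{R}^d$ to $\mathbb{T}^d$.
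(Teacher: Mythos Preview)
Your proof is correct and is precisely the standard argument; note however that the paper does not give its own proof of this proposition at all, but merely records it as a citation from {\cite{bahouri}}, Proposition~2.71. There is therefore nothing to compare against: you have supplied the classical Bernstein-plus-$\ell^q$-embedding proof that the cited reference itself uses.
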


Also recall the following basic facts: for any $\varepsilon > 0$ and any $p, q
\in [1, \infty]$, the space $B^s_{p, q}$ continuously embeds in $B^{s -
\varepsilon}_{p, 1}$, as can be checked using the definition; for any $p \in
[1, \infty]$, we have the embeddings
\[ B^{0}_{p, 1} \hookrightarrow L^p \hookrightarrow
   B^{0}_{p, \infty} \]
see for instance Remark~A.3 from~{\cite{mourrat}} for the second statement.

\begin{proposition}[{\cite{mourrat}}, Prop. A.5]
  \label{prop:besov-differentiation}Let $s \in \mathbb{R}$, $p, q \in [1,
  \infty]$, $i \in \{ 1, \ldots, n \}$. Then the map $f \mapsto \partial_i f$
  is a continuous linear operator from $B^s_{p, q}$ to $B^{s - 1}_{p, q}$.
\end{proposition}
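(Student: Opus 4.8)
The plan is to prove this band by band, using the Bernstein inequalities for Littlewood--Paley blocks. Linearity of $f\mapsto\partial_i f$ is immediate, so only the norm bound $\|\partial_i f\|_{B^{s-1}_{p,q}}\lesssim\|f\|_{B^s_{p,q}}$ is at stake, and this will come from a single clean estimate on each block combined with the definition of the Besov norm.

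First I would note that $\partial_i$ is a Fourier multiplier (symbol $i\xi_i$, resp. $in_i$ on $\mathbb T^d$) and therefore commutes with every Littlewood--Paley projector $\Delta_j$, so $\Delta_j(\partial_i f)=\partial_i(\Delta_j f)$ for all $j\geq-1$. Hence it suffices to establish the Bernstein-type bound
\[ \|\partial_i(\Delta_j f)\|_{L^p}\lesssim 2^{j}\,\|\Delta_j f\|_{L^p}\qquad\text{uniformly in }j\geq-1, \]
valid for all $p\in[1,\infty]$. Granting this, one simply inserts it into the definition of the $B^{s-1}_{p,q}$-norm:
\[ \|\partial_i f\|_{B^{s-1}_{p,q}}=\big\|\,(2^{(s-1)j}\|\Delta_j\partial_i f\|_{L^p})_{j\geq-1}\,\big\|_{\ell^q}\lesssim\big\|\,(2^{sj}\|\Delta_j f\|_{L^p})_{j\geq-1}\,\big\|_{\ell^q}=\|f\|_{B^s_{p,q}}, \]
which is exactly the claimed continuity.

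For the Bernstein estimate, I would recall that $\widehat{\Delta_j f}$ is supported in an annulus $\{|\xi|\sim 2^j\}$ for $j\geq0$ and in a ball $\{|\xi|\lesssim1\}$ for $j=-1$. On $\mathbb R^d$ one writes $\partial_i\Delta_j f=\partial_i(\tilde\varphi_j*\Delta_j f)$, where $\tilde\varphi_j$ is a rescaling of a fixed Schwartz function adapted to that annulus (resp. ball); then $\partial_i\tilde\varphi_j=2^{j}\psi_j$ with $\|\psi_j\|_{L^1}$ bounded uniformly in $j$, and Young's inequality yields $\|\partial_i\Delta_j f\|_{L^p}\lesssim 2^{j}\|\Delta_j f\|_{L^p}$ for every $p$. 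The endpoint $j=-1$ is identical with the ball replacing the annulus, the harmless extra factor being absorbed in the implicit constant. To pass from $\mathbb R^d$ to $\mathbb T^d$ one transfers the convolution kernels via the Poisson summation formula, exactly as in the standard treatments of periodic Besov spaces (see e.g.~{\cite{gubinelli2015lectures,mourrat,schmeisser1987}}).

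I do not expect a genuine obstacle here: the whole argument is routine Littlewood--Paley calculus. The only point demanding a little care is the faithful transcription of the Euclidean Bernstein/Young argument to the torus, i.e. checking that the periodized kernels retain uniformly bounded $L^1$-norms after summation over the lattice; this is precisely the content cited from {\cite{mourrat,schmeisser1987}} and requires no new analytic input. For the application needed in this paper one only deals with $d=1$ and $\partial_i=\partial_y$, in which case the argument goes through verbatim as above.
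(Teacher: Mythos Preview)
Your argument via Bernstein's inequality is correct and is in fact the standard proof of this statement. Note, however, that the paper does not supply its own proof here: the proposition is simply quoted from~\cite{mourrat} (Prop.~A.5) as a known result in the appendix on Besov spaces, with no argument given. So there is nothing to compare against beyond observing that what you wrote is precisely the textbook route (commutation of $\partial_i$ with $\Delta_j$, Bernstein bound $\|\partial_i\Delta_j f\|_{L^p}\lesssim 2^j\|\Delta_j f\|_{L^p}$, then summation in $\ell^q$), which is also how the cited references establish it.
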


\begin{proposition}[{\cite{mourrat}}, Prop. A.7]
  \label{prop:besov-paraproducts}Let $s_1, s_2 \in \mathbb{R}$ and $p, p_1,
  p_2, q \in [1, \infty]$ be such that
  \[ s_1 < 0 < s_2, \quad s_1 + s_2 > 0, \quad \frac{1}{p} = \frac{1}{p_1} +
     \frac{1}{p_2} ; \]
  then $(f, g) \mapsto f g$ is a well-defined continuous bilinear map from
  $B^{s_1}_{p_1, q} \times B^{s_2}_{p_2, q}$ to $B^{s_1}_{p, q}$.
\end{proposition}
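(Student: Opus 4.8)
The plan is to reduce the claim to the standard estimates for \emph{Bony's paraproduct decomposition}, transplanted to the periodic setting via the Littlewood--Paley calculus recalled at the start of this appendix. Writing $S_{j-1} = \sum_{i \leq j-2} \Delta_i$ for the low-frequency cutoff, I decompose the (formal) product as
\[ fg = T_f g + T_g f + R(f,g), \]
where $T_f g = \sum_{j} S_{j-1} f\, \Delta_j g$, $T_g f = \sum_j S_{j-1} g\, \Delta_j f$ and $R(f,g) = \sum_{|i-j|\leq 1} \Delta_i f\, \Delta_j g$. Since $s_2 > 0$, Proposition~\ref{prop:besov-embeddings} gives the embedding $B^{s_1+s_2}_{p,q} \hookrightarrow B^{s_1}_{p,q}$, so it suffices to show $T_f g, R(f,g) \in B^{s_1+s_2}_{p,q}$ and $T_g f \in B^{s_1}_{p,q}$, each with norm controlled by $\|f\|_{B^{s_1}_{p_1,q}}\|g\|_{B^{s_2}_{p_2,q}}$. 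Throughout I write $\|\Delta_i f\|_{L^{p_1}} = c_i\, 2^{-i s_1}$ and $\|\Delta_j g\|_{L^{p_2}} = d_j\, 2^{-j s_2}$, with $(c_i),(d_j) \in \ell^q$ of norms $\|f\|_{B^{s_1}_{p_1,q}}$, $\|g\|_{B^{s_2}_{p_2,q}}$.

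For the two paraproducts the key localisation fact is that each summand $S_{j-1}f\,\Delta_j g$ has Fourier support in a fixed dyadic annulus of size $\sim 2^j$, so $\Delta_k(T_f g)$ receives contributions only for $|k-j| \leq N_0$, and similarly for $T_g f$. Applying H\"older with $1/p = 1/p_1 + 1/p_2$ gives $\|S_{j-1} f\,\Delta_j g\|_{L^p} \leq \|S_{j-1}f\|_{L^{p_1}}\,\|\Delta_j g\|_{L^{p_2}}$. For $T_f g$ the sign condition $s_1 < 0$ makes the geometric sum $\|S_{j-1}f\|_{L^{p_1}} \leq \sum_{i\leq j-2} c_i\, 2^{-i s_1}$ converge and be dominated by its top term $\lesssim 2^{-j s_1}\|f\|_{B^{s_1}_{p_1,q}}$; hence $2^{j(s_1+s_2)}\|S_{j-1}f\,\Delta_j g\|_{L^p} \lesssim d_j\, \|f\|_{B^{s_1}_{p_1,q}}$, and an $\ell^q$ summation against a finitely supported convolution kernel yields $T_f g \in B^{s_1+s_2}_{p,q}$. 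For $T_g f$ the roles reverse: here $s_2 > 0$ ensures $\|S_{j-1}g\|_{L^{p_2}} \lesssim \|g\|_{B^{s_2}_{p_2,q}}$ uniformly in $j$, so $2^{j s_1}\|S_{j-1}g\,\Delta_j f\|_{L^p} \lesssim c_j\, \|g\|_{B^{s_2}_{p_2,q}} \in \ell^q_j$, placing $T_g f$ only in $B^{s_1}_{p,q}$ — this is precisely why the target stated in the proposition is $B^{s_1}_{p,q}$ rather than the better space.

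The resonant term $R(f,g)$ is the main obstacle, and it is where the hypothesis $s_1+s_2 > 0$ is genuinely used. Unlike the paraproducts, the Fourier support of $\Delta_i f\,\Delta_j g$ with $|i-j|\leq 1$ lies in a \emph{ball} of radius $\lesssim 2^{\max(i,j)}$, not an annulus, so $\Delta_k R$ receives contributions from \emph{all} $i \geq k - N_1$. Using $\|\Delta_i f\,\Delta_j g\|_{L^p} \lesssim c_i d_i\, 2^{-i(s_1+s_2)}$ for $|i-j|\leq 1$, I obtain
\[ \|\Delta_k R\|_{L^p} \lesssim \sum_{i \geq k - N_1} c_i d_i\, 2^{-i(s_1+s_2)}, \]
so that $2^{k(s_1+s_2)}\|\Delta_k R\|_{L^p} \lesssim \sum_{i\geq k-N_1} c_i d_i\, 2^{-(i-k)(s_1+s_2)}$. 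Here $s_1+s_2 > 0$ is exactly what makes the kernel $m \mapsto 2^{-m(s_1+s_2)}\mathbbm{1}_{m \geq -N_1}$ summable, so by the discrete Young inequality the left side is $\ell^q$ in $k$, using that $(c_i d_i)\in \ell^q$ with $\|(c_i d_i)\|_{\ell^q} \leq \|(c_i)\|_{\ell^q}\|(d_j)\|_{\ell^q}$ (from $\ell^q \hookrightarrow \ell^\infty$). This gives $R(f,g) \in B^{s_1+s_2}_{p,q}$.

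Combining the three bounds with the embedding $B^{s_1+s_2}_{p,q}\hookrightarrow B^{s_1}_{p,q}$ shows that the series defining $fg$ converges in the Banach space $B^{s_1}_{p,q} \subset \mathcal{S}'(\mathbb{T}^d)$, so the product is well-defined and the bilinear map is continuous into the asserted target, completing the proof. The only genuinely delicate points are the three sign conditions, one per piece of the decomposition — $s_1 < 0$ for $T_f g$, $s_2 > 0$ for $T_g f$, and $s_1 + s_2 > 0$ for $R$ — while periodicity of $\mathbb{T}^d$ enters solely through the Littlewood--Paley calculus already available here, so no idea beyond the Euclidean proof is required.
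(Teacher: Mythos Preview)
Your argument is correct and is exactly the standard Bony paraproduct proof one finds in the references. Note, however, that the paper does \emph{not} supply its own proof of this proposition: it is stated with attribution to \cite{mourrat}, Prop.~A.7, and used as a black box in Appendix~\ref{app:besov}. So there is nothing to compare against beyond observing that your write-up recovers the classical argument underlying the cited result (the three sign conditions $s_1<0$, $s_2>0$, $s_1+s_2>0$ governing $T_f g$, $T_g f$, $R(f,g)$ respectively, combined with H\"older in the spatial variable and Young in the frequency variable). One cosmetic point: the embedding $B^{s_1+s_2}_{p,q}\hookrightarrow B^{s_1}_{p,q}$ you invoke is not literally Proposition~\ref{prop:besov-embeddings} (which concerns varying $p,q$) but rather the remark immediately following it; the claim is of course true and available.
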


\begin{proposition}[{\cite{bahouri}}, Cor. 2.86]
  \label{prop:besov-algebra}For any $s > 0$ and $p, q \in [1, \infty]$, the
  space $B^s_{p, q} \cap L^{\infty}$ is an algebra and there exists a constant
  $C = C (s)$ such that
  \[ \| f g \|_{B^s_{p, q}} \leqslant C (\| f \|_{L^{\infty}} \| g \|_{B^s_{p,
     q}} + \| f \|_{B^s_{p, q}} \| g \|_{L^{\infty}}) \quad \forall \, f, g
     \in B^s_{p, q} \cap L^{\infty} . \]
\end{proposition}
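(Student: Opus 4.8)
The plan is to prove both inequalities by decomposing $f_0$ into its Fourier modes $P_k f_0$ in the $x$-variable and applying the two main theorems one mode at a time. Since $P_0 f_0 \equiv 0$, only the indices $k \in \mathbb{Z}_0$ contribute, and each mode evolves autonomously: recalling $P_k(e^{-t u \partial_x} f_0) = e^{-i t k u} P_k f_0$, the very definition of the mixed norm $\|\cdot\|_{L^2_x H^s_y}$ reduces each claim to a summable family of one-dimensional estimates, for which Theorems~\ref{thm:main-thm-inviscid} and~\ref{thm:main-thm-enhanced} supply exactly the mode-wise decay we need.

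For~\eqref{eq:final-thm-eq1} I would fix $\tilde\alpha > \alpha$ and invoke Theorem~\ref{thm:main-thm-inviscid}~b): for a.e.\ $u \in B^\alpha_{1,\infty}(\mathbb{T})$ the flow is mixing on $H^{1/2}$ with rate $r_{1/2\text{-mix}}(t)\gtrsim t^{1/(2\tilde\alpha)}$, so by Definition~\ref{def:mixing} one has $\|e^{-i t k u}\|_{H^{1/2}\to H^{-1/2}}\lesssim (t|k|)^{-1/(2\tilde\alpha)}$ uniformly in $k\in\mathbb{Z}_0$. Squaring this bound on each mode and summing yields
\[ \|e^{-t u \partial_x} f_0\|_{L^2_x H^{-1/2}_y}^2 = \sum_{k\in\mathbb{Z}_0}\|e^{-itku}P_k f_0\|_{H^{-1/2}}^2 \lesssim \sum_{k\in\mathbb{Z}_0}(t|k|)^{-1/\tilde\alpha}\|P_k f_0\|_{H^{1/2}}^2 \leqslant t^{-1/\tilde\alpha}\|f_0\|_{L^2_x H^{1/2}_y}^2, \]
where the final step uses only $|k|\geqslant 1$; taking square roots gives the first estimate.

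For~\eqref{eq:final-thm-eq2} the crucial device is the commuting operator splitting noted in the statement: setting $\mathcal{L}_\nu = -u\partial_x + \nu\partial_y^2$, one has $-u\partial_x + \nu\Delta = \mathcal{L}_\nu + \nu\partial_x^2$, and since $u=u(y)$ the two summands commute, so the semigroup factorizes. As $P_k(e^{t\nu\partial_x^2}f)=e^{-t\nu k^2}P_k f$, applying Theorem~\ref{thm:main-thm-enhanced} (equivalently Theorem~\ref{thm:wei}, valid for a.e.\ $u$ via Corollary~\ref{cor:prevalence-periodic-wei}) to the hypoelliptic part on each mode gives, after squaring and summing,
\[ \|e^{t(-u\partial_x+\nu\Delta)}f\|_{L^2}^2 = \sum_{k\in\mathbb{Z}_0} e^{-2t\nu k^2}\,\|P_k(e^{t\mathcal{L}_\nu}f)\|_{L^2}^2 \lesssim \sum_{k\in\mathbb{Z}_0}\exp\Big(-2t\nu k^2 - C\,t\,\nu^{\tilde\alpha/(\tilde\alpha+2)}|k|^{2/(\tilde\alpha+2)}\Big)\|P_k f\|_{L^2}^2. \]
Discarding the only-helpful factor $e^{-2t\nu k^2}\leqslant 1$ and using $|k|^{2/(\tilde\alpha+2)}\geqslant 1$ for $|k|\geqslant 1$ bounds every exponential by $\exp(-C t\,\nu^{\tilde\alpha/(\tilde\alpha+2)})$, which factors out and leaves $\sum_k\|P_k f\|_{L^2}^2=\|f\|_{L^2}^2$, establishing the second estimate.

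Once Theorems~\ref{thm:main-thm-inviscid} and~\ref{thm:main-thm-enhanced} are in hand the argument is mostly bookkeeping; the one genuinely structural ingredient is the commuting decomposition $-u\partial_x+\nu\Delta=\mathcal{L}_\nu+\nu\partial_x^2$, which isolates the hypoelliptic dynamics governed by Definition~\ref{def:diffusion-enhancing} from the harmless $x$-diffusion $\nu\partial_x^2$. I expect the only real care to be in the norm comparisons, using $\|f\|_{L^2_x H^s_y}\leqslant\|f\|_{H^s}$ for $s\geqslant 0$ and $\|f\|_{H^{-s}}\leqslant\|f\|_{L^2_x H^{-s}_y}$, so that the sharper mixed-norm bounds proved here indeed imply the $H^{\pm1/2}$-type statements of Theorem~\ref{thm:main-thm-1}.
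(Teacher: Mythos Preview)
Your proposal does not address the stated proposition at all. The statement you were asked to prove is Proposition~\ref{prop:besov-algebra}, the classical algebra property of $B^s_{p,q}\cap L^\infty$ (cited from \cite{bahouri}, Cor.~2.86): for $s>0$ one has $\|fg\|_{B^s_{p,q}}\lesssim \|f\|_{L^\infty}\|g\|_{B^s_{p,q}}+\|f\|_{B^s_{p,q}}\|g\|_{L^\infty}$. Instead, what you have written is a proof of the unnamed theorem at the end of Section~\ref{sec:prevalence-enhanced}, namely the estimates~\eqref{eq:final-thm-eq1} and~\eqref{eq:final-thm-eq2}. That argument is fine for \emph{that} theorem (and indeed matches the paper's own proof essentially verbatim), but it has no bearing on the Besov algebra inequality.

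For Proposition~\ref{prop:besov-algebra} the paper gives no proof; it is simply quoted from~\cite{bahouri}. A proof would proceed by Bony's paraproduct decomposition $fg=T_fg+T_gf+R(f,g)$ and the standard continuity bounds $\|T_fg\|_{B^s_{p,q}}\lesssim\|f\|_{L^\infty}\|g\|_{B^s_{p,q}}$ (valid for all $s\in\mathbb{R}$) together with $\|R(f,g)\|_{B^s_{p,q}}\lesssim\|f\|_{L^\infty}\|g\|_{B^s_{p,q}}$ for $s>0$. None of the Fourier-in-$x$ mode decomposition, the mixing rate from Definition~\ref{def:mixing}, or the commuting splitting $-u\partial_x+\nu\Delta=\mathcal{L}_\nu+\nu\partial_x^2$ is relevant here.
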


Another key property of Besov spaces is that they include several other
classical function spaces:
\begin{itemize}
  \item for $s \in \mathbb{R}$, $B^s_{2, 2} (\mathbb{T}^d)$ coincide with the
  Sobolev spaces $H^s (\mathbb{T}^d)$, with equivalent norms;
  
  \item for $s \in (0, 1)$, $B^s_{\infty, \infty} (\mathbb{T}^d)$ coincide
  with $C^s (\mathbb{T}^d)$, the space of periodic $s$-H\"{o}lder continuous
  functions (w.r.t. the canonical distance $d_{\mathbb{T}^d}$), with
  equivalent norms;
  
  \item for $s \in (0, 1)$ and $p \in [1, \infty)$, the spaces $B^s_{p,
  \infty} (\mathbb{T}^d)$, often referred to as Besov--Nikolskii spaces, can
  be characterized by the equivalent norm
  \begin{equation}\label{eq:besov-equiv-norm-1}
  	\|f \tilde{\|}_{B^s_{p, \infty}} \assign \| f \|_{L^p} + \sup_{x \neq y
     \in \mathbb{T}^d} \frac{\left\| f \left( \cdot \, + x \right) - f \left(
     \cdot \, + y \right) \right\|_{L^p}}{d_{\mathbb{T}^d} (x, y)^s}
  \end{equation}
  \item for $s \in (0, 1)$, $p, q \in [1, \infty)$ the space $B^s_{p, q}
  (\mathbb{T}^d)$ has equivalent norm
  \begin{equation}\label{eq:besov-equiv-norm-2}
  \|f \tilde{\|}_{B^s_{p, q}} \assign \| f \|_{L^p} + \int_{\mathbb{T}^d}
     \left( \frac{\left\| f \left( \cdot \, + x \right) - f (\cdot)
     \right\|_{L^p}}{d_{\mathbb{T}^d} (x, 0)^s} \right)^q 
     \frac{1}{d_{\mathbb{T}^d} (x, 0)^d} \mathd x.
  \end{equation}
\end{itemize}

We conclude this appendix by proving some interpolation inequalities, which
played a fundamental role in the proofs in
Section~\ref{sec:lower-bound-inviscid}.

\begin{lemma}
  \label{lem:interpolation-besov}Let $p \in [1, \infty]$, $s_1, s_2 \in
  \mathbb{R}$ with $s_1 < s_2$ and $\theta \in (0, 1)$. Then there exists a
  constant $C = C (p, s_2 - s_1, \theta)$ such that
  \begin{equation}
    \| f \|_{B^{\theta s_1 + (1 - \theta) s_2}_{p, 1}} \leqslant C \| f
    \|_{B^{s_1}_{p, \infty}}^{\theta}  \| f \|^{1 - \theta}_{B^{s_2}_{p,
    \infty}}  \quad \forall \, f \in B^{s_2}_{p, \infty} .
    \label{eq:interpolation-besov}
  \end{equation}
\end{lemma}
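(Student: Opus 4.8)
The plan is to argue directly from the Littlewood--Paley characterisation of the Besov norms recalled above, splitting the defining series of $\| f \|_{B^{\theta s_1 + (1 - \theta) s_2}_{p, 1}}$ at a dyadic threshold tuned to $f$. Set $s \assign \theta s_1 + (1 - \theta) s_2$, so that $s_1 < s < s_2$, and abbreviate $A \assign \| f \|_{B^{s_1}_{p, \infty}}$, $B \assign \| f \|_{B^{s_2}_{p, \infty}}$; we may assume $A, B > 0$, the degenerate cases being trivial. By the very definition of these norms one has $\| \Delta_j f \|_{L^p} \leqslant A \, 2^{- j s_1}$ and $\| \Delta_j f \|_{L^p} \leqslant B \, 2^{- j s_2}$ for every $j \geqslant - 1$.

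First I would fix an integer $N \geqslant - 1$ and estimate
\[ \| f \|_{B^s_{p, 1}} = \sum_{j = - 1}^{\infty} 2^{j s} \| \Delta_j f \|_{L^p} \leqslant A \sum_{j = - 1}^{N} 2^{j (s - s_1)} + B \sum_{j = N + 1}^{\infty} 2^{j (s - s_2)} \lesssim 2^{N (s - s_1)} A + 2^{N (s - s_2)} B , \]
where in the first (finite) sum I used $s - s_1 > 0$ and in the second (convergent geometric) sum I used $s - s_2 < 0$, the implied constant depending only on $s - s_1$ and $s_2 - s$, hence only on $\theta$ and $s_2 - s_1$. Next I would optimise over the threshold: choosing $N \in \mathbb{Z}$ with $2^N \sim (B / A)^{1 / (s_2 - s_1)}$ balances the two summands, and substituting this value while using the identities $s - s_1 = (1 - \theta)(s_2 - s_1)$ and $s_2 - s = \theta (s_2 - s_1)$ yields $\| f \|_{B^s_{p, 1}} \lesssim A^{\theta} B^{1 - \theta}$, which is the claim, with a constant of the form stated.

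The only point needing care — and where I expect the (modest) bookkeeping to concentrate — is the admissible range of the threshold: the balancing choice is realisable by an integer $N \geqslant - 1$ only when $B / A \gtrsim 2^{-(s_2 - s_1)}$. When instead $B / A < 2^{-(s_2 - s_1)}$, one simply uses $\| \Delta_j f \|_{L^p} \leqslant B \, 2^{- j s_2}$ for all $j \geqslant - 1$, obtaining $\| f \|_{B^s_{p, 1}} \leqslant B \sum_{j \geqslant - 1} 2^{j (s - s_2)} \lesssim B \leqslant A^{\theta} B^{1 - \theta}$, the last step because $B < A$ in this regime. A more conceptual but less self-contained alternative would be to recall that $B^s_{p, 1} = (B^{s_1}_{p, \infty}, B^{s_2}_{p, \infty})_{1 - \theta, 1}$ by reiteration of real interpolation and then invoke the standard interpolation inequality $\| f \|_{(X_0, X_1)_{\eta, 1}} \lesssim \| f \|_{X_0}^{1 - \eta} \| f \|_{X_1}^{\eta}$; I would nonetheless favour the elementary dyadic argument above in order to keep the appendix self-contained.
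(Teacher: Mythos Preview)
Your argument is correct and is essentially the same as the paper's own proof: the paper also splits the Littlewood--Paley series at a dyadic threshold $N$, bounds the low and high parts by $\| f \|_{B^{s_1}_{p,\infty}} 2^{N(1-\theta)(s_2-s_1)}$ and $2^{-N\theta(s_2-s_1)}$ respectively (after normalising $\| f \|_{B^{s_2}_{p,\infty}}=1$), and then optimises in $N$. Your treatment of the edge case $B/A < 2^{-(s_2-s_1)}$ is in fact slightly more careful than the paper's, which glosses over the admissible range of $N$.
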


\begin{proof}
  The result is well known, see Theorem 2.80 from~{\cite{bahouri}} for the
  statement on $\mathbb{R}^d$; let us provide a self-contained proof.
  
  We may assume $\| f \|_{B^{s_2}_{p, \infty}} = 1$; for any $N \geqslant 0$
  it holds
  \begin{align*}
    \| f \|_{B^{s_{\theta}}_{p, 1}} & = \sum_{j < N} 2^{j (\theta s_1 + (1 -
    \theta) s_2)} \| \Delta_j f \|_{L^p} + \sum_{j \geqslant N} 2^{j (\theta
    s_1 + (1 - \theta) s_2)} \| \Delta_j f \|_{L^p}\\
    & \leqslant \| f \|_{B^{s_1}_{p, \infty}} \sum_{j < N} 2^{j (1 - \theta)
    (s_2 - s_1)} + \| f \|_{B^{s_2}_{p, \infty}} \sum_{j \geqslant N} 2^{- j
    \theta (s_2 - s_1)}\\
    & \lesssim \| f \|_{B^{s_1}_{p, \infty}} 2^{N (1 - \theta) (s_2 - s_1)} +
    2^{- N \theta (s_2 - s_1)} .
  \end{align*}
  Choosing $N$ such that $\| f \|_{B^{s_1}_{p, \infty}} \sim 2^{- N (s_2 -
  s_1)}$ the conclusion then follows.
\end{proof}

\begin{corollary}
  \label{cor:interpolation-besov}For any $s_1, s_2 > 0$ there exists a
  constant $C (s_1, s_2)$ such that
  \begin{equation}
    \| f \|_{L^2} \leqslant C \| f \|_{H^{- s_1}}^{s_2 / (s_1 + s_2)}  \| f
    \|_{B^{s_2}_{2, \infty}}^{s_1 / (s_1 + s_2)} \quad \forall \, f \in
    B^{s_2}_{2, \infty} . \label{eq:interpolation-besov-2}
  \end{equation}
\end{corollary}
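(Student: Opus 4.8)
The plan is to deduce Corollary~\ref{cor:interpolation-besov} directly from Lemma~\ref{lem:interpolation-besov} by specializing the parameters and invoking two elementary Besov embeddings recorded in Appendix~\ref{app:besov}. First I would apply Lemma~\ref{lem:interpolation-besov} with $p = 2$, with the roles of the lemma's parameters ``$s_1$'' and ``$s_2$'' played by $-s_1$ and $s_2$ respectively (note $-s_1 < 0 < s_2$ since $s_1, s_2 > 0$), and with interpolation parameter $\theta$ chosen so that the target smoothness vanishes:
\[ \theta \cdot (-s_1) + (1-\theta)\, s_2 = 0 \quad \Longleftrightarrow \quad \theta = \frac{s_2}{s_1+s_2}, \quad 1-\theta = \frac{s_1}{s_1+s_2}, \]
which indeed lies in $(0,1)$. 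This yields a constant $C = C(s_1,s_2)$ with
\[ \| f \|_{B^{0}_{2,1}} \leqslant C\, \| f \|_{B^{-s_1}_{2,\infty}}^{s_2/(s_1+s_2)}\, \| f \|_{B^{s_2}_{2,\infty}}^{s_1/(s_1+s_2)}. \]

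Second, I would upgrade the left-hand side and downgrade the first factor on the right using the embeddings recalled after Proposition~\ref{prop:besov-embeddings}: the inclusion $B^{0}_{2,1} \hookrightarrow L^2$ gives $\| f \|_{L^2} \lesssim \| f \|_{B^{0}_{2,1}}$, while $H^{-s_1} = B^{-s_1}_{2,2} \hookrightarrow B^{-s_1}_{2,\infty}$ (a case of Proposition~\ref{prop:besov-embeddings}) gives $\| f \|_{B^{-s_1}_{2,\infty}} \lesssim \| f \|_{H^{-s_1}}$. Chaining these with the displayed interpolation inequality produces exactly \eqref{eq:interpolation-besov-2}, with the constant depending only on $s_1$ and $s_2$, since all embedding constants on $\mathbb{T}^d$ are universal up to dependence on the smoothness indices and the dimension.

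There is essentially no obstacle here: the real content lies in Lemma~\ref{lem:interpolation-besov}, and the only care needed is the bookkeeping of indices — verifying $-s_1 < s_2$ so the lemma applies and that $\theta \in (0,1)$. For completeness one can note that the particular instance used in Section~\ref{sec:lower-bound-inviscid} is $s_1 = 1/2$, $s_2 = \alpha/2$, for which the exponents become $\alpha/(1+\alpha)$ and $1/(1+\alpha)$, matching the application in \eqref{eq:inviscid-lower-proof2}.
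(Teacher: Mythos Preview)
Your proposal is correct and follows essentially the same argument as the paper's own proof: apply Lemma~\ref{lem:interpolation-besov} with $p=2$, endpoints $-s_1$ and $s_2$, and $\theta = s_2/(s_1+s_2)$, then use the embeddings $B^{0}_{2,1}\hookrightarrow L^2$ and $H^{-s_1}=B^{-s_1}_{2,2}\hookrightarrow B^{-s_1}_{2,\infty}$. In fact your write-up is slightly cleaner, since the paper's displayed choice ``$\theta = s_2/(s_1 - s_2)$'' is a typo for $s_2/(s_1+s_2)$.
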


\begin{proof}
  Applying Lemma~\ref{lem:interpolation-besov} for the choice $p = 2$, $\theta
  = s_2 / (s_1 - s_2)$ and using Besov embeddings we find
  \[ \| f \|_{L^2} \leqslant \| f \|_{B^0_{2, 1}} \lesssim \| f \|_{B^{-
     s_1}_{2, \infty}}^{\theta}  \| f \|^{1 - \theta}_{B^{s_2}_{2, \infty}}
     \lesssim \| f \|_{H^{- s_1}}^{\theta} \| f \|^{1 - \theta}_{B^{s_2}_{2,
     \infty}} . \qedhere \]
\end{proof}

\section{A simple extension of a result by Wei}\label{app:wei-extension}

Theorem~5.1 from~{\cite{wei}} requires the restriction to work with $u \in C
(\mathbb{T})$, but we show here that such a restriction is not necessary and
in fact the result holds for any $u \in L^1 (\mathbb{T})$, as stated in
Theorem~\ref{thm:wei}. Let us recall the setting: we are interested in the
decay of solutions to complex valued PDEs of the form
\begin{equation}
  \partial_t f + i u f = \nu \partial_y^2 f. \label{eq:pde-app}
\end{equation}
Equation~\ref{eq:pde-app} is well-posed (in the weak sense) for any $u \in L^1
(\mathbb{T})$ and $f_0 \in L^2 (\mathbb{T})$. Indeed, for smooth $u$, any
solution $f$ to~\eqref{eq:pde-app} satisfies
\[ \partial_t \| f \|_{L^2}^2 + 2 \nu \| \partial_y f \|_{L^2}^2 = 0, \]
thus implying that it belongs to $L^2 (0, T ; H^1 (\mathbb{T}))
\hookrightarrow L^2 (0, T ; C (\mathbb{T}))$; therefore we have uniform
estimates for $iu f \in L^2 (0, T ; L^1 (\mathbb{T}))$ only depending on $\| u
\|_{L^1}$. Arguing by weak compactness one can then easily construct weak
solutions to~\eqref{eq:pde-app} for any $u \in L^1 (\mathbb{T})$, establish
their uniqueness, and show that they are the strong limit in $C ([0, T] ; L^2
(\mathbb{T}))$ of those to smooth $u$. Overall, this defines the semigroup $t
\mapsto e^{t (\nu \partial_y^2 - i u)}$ on $L^2 (\mathbb{T})$ for any $u \in
L^1 (\mathbb{T})$ and $\nu > 0$.

\

Identifying $u \in L^1 (\mathbb{T})$ with a $2 \pi$-periodic function, its
primitive $\psi$ is a (non periodic) element of $C (\mathbb{R})$, well defined
up to additive constant; for given $\delta \in (0, 1)$, define
\[ \omega_1 (\delta, u) : = \inf_{x, c_1, c_2 \in \mathbb{R}} \int_{x -
   \delta}^{x + \delta} | \psi (y) - c_1 - c_2 \delta |^2 \mathd y. \]
Denote by $F : \mathbb{R}_{\geqslant 0} \rightarrow [0, \pi / 2]$ the inverse
of $x \mapsto 36 x \tan x$, which is a one-to-one increasing function. The
next statement summarizes some of the main findings from~{\cite{wei}}.

\begin{lemma}
  \label{lem:wei-summary}Let $u \in C (\mathbb{T})$ and $\nu > 0$ be fixed;
  then for all $\delta \in (0, 1)$ and $t \geqslant 0$ it holds
  \begin{equation}
    \| e^{t (\partial_y^2 - i u)} \|_{L^2 \rightarrow L^2} \leqslant \exp
    \left( \frac{\pi}{2} - t \nu \delta^{- 2} F (\delta \nu^{- 2} (\omega_1
    (\delta, u)))^2 \right) . \label{eq:wei-summary-app}
  \end{equation}
\end{lemma}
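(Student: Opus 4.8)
The plan is to derive \eqref{eq:wei-summary-app} by assembling two ingredients, both taken from \cite{wei}. The first is Wei's sharp quantitative Gearhart--Pr\"uss estimate: for any dissipative operator $H$ on a Hilbert space, i.e. one with $\mathrm{Re}\,\langle Hf,f\rangle\le 0$ for all $f$ in the domain, one has
\[
  \big\|e^{tH}\big\|_{L^2\to L^2}\ \le\ \exp\!\Big(\tfrac{\pi}{2}-t\,\Psi(H)\Big),
  \qquad
  \Psi(H):=\inf_{\lambda\in\mathbb{R}}\ \inf_{\substack{f\in D(H)\\ \|f\|_{L^2}=1}}\ \big\|(H-i\lambda)f\big\|_{L^2},
\]
and this is precisely what produces the universal constant $\pi/2$ in the statement. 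I would apply it with $H=\nu\partial_y^2-iu$, the generator of the semigroup discussed at the start of this appendix, which is dissipative since $\mathrm{Re}\,\langle Hf,f\rangle=-\nu\|\partial_y f\|_{L^2}^2\le 0$. The second ingredient is a one-dimensional resolvent (pseudospectral) lower bound: for every $\delta\in(0,1)$,
\[
  \Psi\big(\nu\partial_y^2-iu\big)\ \ge\ \nu\,\delta^{-2}\,F\big(\delta\,\nu^{-2}\,\omega_1(\delta,u)\big)^2 .
\]
Once this is available, \eqref{eq:wei-summary-app} follows immediately, and one is then free to optimise over $\delta$.

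The substance of the argument is the resolvent bound, and here I would reproduce Wei's computation. Fix $\lambda\in\mathbb{R}$ and $f$ with $\|f\|_{L^2}=1$, set $g:=(\nu\partial_y^2-iu-i\lambda)f$, and aim for a lower bound on $\|g\|_{L^2}$. Two competing mechanisms enter. The diffusive one: pairing with $f$ and taking real parts gives $\nu\|\partial_y f\|_{L^2}^2=-\mathrm{Re}\,\langle g,f\rangle\le\|g\|_{L^2}$, so $f$ is forced to be nearly constant on short intervals once $\|g\|_{L^2}$ is small. The transport one: with $q(y):=\psi(y)+\lambda y$ so that $u+\lambda=q'$, I would test $g$ against $\Phi\bar f$ for suitably chosen real cutoffs $\Phi$ supported on intervals $I$ of length $2\delta$ and integrate by parts to move the shear onto $q$; this bounds $\int_I|f|^2$ in terms of $\|g\|_{L^2}$ and of how far $q$ is from affine on $I$. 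That defect is insensitive to the linear term $\lambda y$, hence equals $\inf_{c_1,c_2}\int_I|\psi-c_1-c_2 y|^2\,\mathrm{d}y\ge\omega_1(\delta,u)$; covering $\mathbb{T}$ by $O(\delta^{-1})$ such intervals and using $\|f\|_{L^2}=1$ then forces $\|g\|_{L^2}$ to be bounded below. Balancing the diffusive decay at scale $\delta$ (the eigenvalue $\sim\nu\delta^{-2}$ of $-\nu\partial_y^2$ on an interval of length $\delta$) against this transport defect and optimising the cutoffs produces the sharp factor; the appearance of $F$, the inverse of $x\mapsto 36\,x\tan x$, reflects the extremal constants in the one-dimensional spectral inequalities used to build $\Phi$.

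The one genuinely delicate point, and the step I would simply quote from \cite{wei} rather than re-derive, is the exact bookkeeping of constants that lands on the particular function $F$ and on the exponent $\pi/2$. Everything around it is soft: the Gearhart--Pr\"uss step is a general Hilbert-space fact, and continuity of $u$ is used only to guarantee $\psi\in C(\mathbb{R})$ and the classical well-posedness of \eqref{eq:pde-app}, so that no regularity of $u$ beyond $u\in L^1(\mathbb{T})$ is actually required. This is exactly what makes the extension in Theorem~\ref{thm:wei} possible through the approximation scheme outlined above.
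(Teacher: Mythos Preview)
Your proposal is correct and follows essentially the same route as the paper: both combine Wei's quantitative Gearhart--Pr\"uss theorem with the resolvent (pseudospectral) lower bound from~\cite{wei}. The only presentational difference is that the paper first rescales time to reduce to the operator $\partial_y^2 - i u/\nu$ and then cites Lemma~4.3 of~\cite{wei} as a black box, invoking the $2$-homogeneity $\omega_1(\delta,u/\nu)=\nu^{-2}\omega_1(\delta,u)$ to recover the stated bound; you instead apply Gearhart--Pr\"uss directly to $\nu\partial_y^2 - iu$ and absorb the $\nu$-dependence into the resolvent estimate. Your heuristic sketch of Wei's resolvent computation is accurate but unnecessary here, since the lemma is explicitly billed as a summary of~\cite{wei} and the paper simply quotes the relevant results rather than reproving them.
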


\begin{proof}
  By time rescaling, the solution $f$ to~\eqref{eq:pde-app} is given by $f (t,
  y) = f^{\nu} (t \nu, y)$ where $f^{\nu}$ solves $\partial_t f^{\nu} + i
  u^{\nu} f^{\nu} = \partial_y^2 f^{\nu}$ where $u^{\nu} = u / \nu$; applying
  the Gearhart--Pr\"{u}ss theorem (Theorem~1.3 from~{\cite{wei}}) to
  $f^{\nu}$, it holds
  \[ \| f_t \|_{L^2} \leqslant \exp \left( \frac{\pi}{2} - t \nu \psi_1
     (u^{\nu}) \right) \]
  where $\psi_1 (u)$ is defined as in Section~4 from~{\cite{wei}}. By
  Lemma~4.3 therein and the 2-homogeneity of $u \mapsto \omega (\delta, u)$,
  for any $\delta \in (0, 1)$ it holds
  \[ \psi_1 (u^{\nu}) \geqslant \delta^{- 2} F (\delta (\omega_1 (\delta,
     u^{\nu})))^2 = \delta^{- 2} F (\delta \nu^{- 2} (\omega_1 (\delta, u)))
  \]
  which gives the conclusion.
\end{proof}

We can now give the

\begin{proof}[Proof of Theorem~\ref{thm:wei}]
  By time rescaling, we can restrict to the case $k = 1$. Now let $u \in L^1
  (\mathbb{T})$ be a function satisfying $\Gamma_{\alpha} (u) > 0$ for some
  $\alpha \in (0, 1)$ and consider a family $\{ u^{\varepsilon}, \varepsilon >
  0 \}$ of continuous functions satisfying $\| u^{\varepsilon} - u \|_{L^1}
  \leqslant \varepsilon$. Denote by $\psi^{\varepsilon}$ the primitive of
  $u^{\varepsilon}$; by the basic inequality $a^2 \geqslant b^2 / 2 - (a -
  b)^2$, for any $\delta (0, 1)$ it holds
  \[ \begin{array}{ll}
       \omega_1 (\delta, u^{\varepsilon}) & \geqslant \frac{1}{2} \omega_1
       (\delta, u) - \sup_{x \in \mathbb{R}} \int_{x - \delta}^{x + \delta} |
       \psi^{\varepsilon} (y) - \psi (y) |^2 \mathd y\\
       & \geqslant \frac{1}{2} \omega_1 (\delta, u) - 2 \delta \| u -
       u^{\varepsilon} \|_{L^1 (\mathbb{T})}^2 .
     \end{array} \]
  Combined with the fact that by definition $\omega_1 (\delta, u) \geqslant
  2^{2 \alpha + 3} \delta^{2 \alpha + 3} \Gamma_{\alpha} (u)^2$, we deduce
  \begin{equation}
    \omega_1 (\delta, u^{\varepsilon}) \geqslant 2^{2 \alpha + 2} \delta^{2
    \alpha + 3} \Gamma_{\alpha} (u)^2 - 2 \delta \varepsilon^2 \quad \forall
    \, \delta \in (0, 1), \, \varepsilon > 0.
  \end{equation}
  Now fix $\nu > 0$ and define $C_1 = e^{\pi / 2}$, $C_2 = 2^{2 \alpha + 2}
  \Gamma_{\alpha} (u)^2$; applying Lemma~\ref{lem:wei-summary} to
  $u^{\varepsilon}$, exploiting the fact that $F$ is increasing, and choosing
  $\delta = \nu^{1 / (\alpha + 2)}$, we obtain
  \begin{equation}
    \| e^{t (\partial_y^2 - i u^{\varepsilon})} \|_{L^2 \rightarrow L^2}
    \leqslant C_1 \exp \left( - t \nu^{\frac{\alpha}{\alpha + 2}} F \left( C_2
    - 2 \nu^{- 2 \frac{\alpha + 1}{\alpha + 2}} \varepsilon^2 \right)^2
    \right) \quad \forall \, t \geqslant 0 \label{eq:app-proof2}
  \end{equation}
  where the estimate holds for all $\varepsilon > 0$ small enough such that
  $C_2 - 2 \nu^{- 2 (\alpha + 1) / (\alpha + 2)} \varepsilon^2 > 0$.
  
  Since the semigroup $e^{t (\nu \partial_y^2 - i u^{\varepsilon})}$ pointwise
  converges to $e^{t (\nu \partial_y^2 - i u^{\varepsilon})}$ as $\varepsilon
  \rightarrow 0^+$, passing to the limit on both sides
  of~\eqref{eq:app-proof2} gives the conclusion.
\end{proof}

\

\tmtextbf{Declarations.}
The authors have no relevant financial or non-financial interests to disclose.

\

\end{document}